\newtheorem*{thmA}{Theorem~A}
\newtheorem*{thmB}{Theorem~B}
\newtheorem*{cor*}{Corollary}
\newtheorem*{lem*}{Lemma}
\newtheorem*{prop*}{Proposition}
\newtheorem*{d)}{\ref{R1.5}d~Lemma}
\newtheorem*{e)}{\ref{R1.5}e~Lemma}
\newtheorem*{R1.4a}{\ref{R1.4}a~Lemma}
\newtheorem*{R1.4b}{\ref{R1.4}b~Lemma}
\newtheorem*{R1.4c}{\ref{R1.4}c~Lemma}
\newtheorem*{R1.8a}{\ref{R1.8}a~Lemma}
\newtheorem*{R1.8b}{\ref{R1.8}b~Corollary}
\newtheorem*{R2.1c}{\ref{R2.1}c~Lemma}
\newtheorem*{R2.1d}{\ref{R2.1}d~Lemma}
\newtheorem*{R2.6a}{\ref{R2.6}a~Theorem}
\newtheorem*{R2.6b}{\ref{R2.6}b~Theorem}
\newtheorem*{R2.6c}{\ref{R2.6}c~Theorem}
\newtheorem*{R3.1.4'}{\ref{R3.1.4}$^\prime$~Theorem}
\newtheorem{thm}{Theorem}[section]
\newtheorem{lem}[thm]{Lemma}
\newtheorem{prop}[thm]{Proposition}
\newtheorem{sthm}{Theorem}[subsection]
\theoremstyle{definition}
\newtheorem{Sn}[thm]{Standing notation}
\newtheorem*{w}{Warning}
\newtheorem*{note*}{Note}
\newtheorem*{notes*}{Notes}
\newtheorem{rem}[thm]{Remark}
\newtheorem{rems}[thm]{Remarks}
\newtheorem*{rems*}{Remarks}
\newtheorem*{rem*}{Remark}
\newtheorem*{que*}{Question}
\newtheorem{srem}[sthm]{Remark}
\newtheorem{se}[thm]{}
\newtheorem{sse}[sthm]{}
\newtheorem{exa}[sthm]{Example}
\newtheorem*{R2.1a}{\ref{R2.1}a}
\newtheorem*{R2.1b}{\ref{R2.1}b}
\newtheorem*{R2.2a}{\ref{R2.2}a}
\newtheorem*{R2.2b}{\ref{R2.2}b}
\newcommand{\K}{\boldsymbol{K}}
\newcommand{\C}{\boldsymbol{C}}
\journal{Illinois J. Math. (March 24, 2022; accepted July 21, 2022)}
\begin{document}

\begin{frontmatter}
\title{On recurrence in zero-dimensional locally compact flow with compactly generated phase group}

\author{Xiongping Dai}
\ead{xpdai@nju.edu.cn}
%%\cortext[cor1]{Corresponding author}
\address{Department of Mathematics, Nanjing University, Nanjing 210093, People's Republic of China}

%%%%%%%%%%%%%%%%%%%%%%%%%%%%%%
\begin{abstract}
We define recurrence for a compactly generated para-topological group $G$ acting continuously on a locally compact Hausdorff space $X$ with $\dim X=0$, and then, show that if $\overline{Gx}$ is compact for all $x\in X$, the conditions (i) this dynamics is pointwise recurrent, (ii) $X$ is a union of $G$-minimal sets, (iii) the $G$-orbit closure relation is closed in $X\times X$, and (iv) $X\ni x\mapsto \overline{Gx}\in 2^X$ is continuous, are pairwise equivalent. Consequently, if this dynamics is pointwise product recurrent, then it is pointwise regularly almost periodic and equicontinuous; moreover, a distal, compact, and non-connected $G$-flow has a non-trivial equicontinuous pointwise regularly almost periodic factor.
\end{abstract}

\begin{keyword}
Recurrence $\cdot$ Distality $\cdot$ Zero-dimensional flow $\cdot$ Compactly generated group

\medskip
\MSC[2010] 37B05, 54H15
\end{keyword}
\end{frontmatter}
%%% ----------------------------------------------------------------------
%%% ----------------------------------------------------------------------
\section{Introduction}\label{sec0}%%%

Let $(G,X)$ be a \textit{flow} with phase group $G$ and with phase space $X$, which is in the following context, unless stated otherwise:
\begin{enumerate}[\textcircled{a}]
\item[\textcircled{a}] $G$ is a ``Hausdorff para-topological group'', namely: $G$ is a multiplicative group with a Hausdorff topology under which $G\times G\xrightarrow{(s,t)\mapsto st}G$ is jointly continuous, but $G\xrightarrow{t\mapsto t^{-1}}G$ need not be continuous;
\item[\textcircled{b}] $X$ is a locally compact, Hausdorff, uniform space; and moreover,
\item[\textcircled{c}] there is a left-action of $G$ on $X$, denoted $G\times X\xrightarrow{(t,x)\mapsto tx}X$ such that
\begin{enumerate}[(i)]
\item  Continuity: $(t,x)\mapsto tx$ is jointly continuous;
\item Transformation group: $ex=x$ and $(st)x=s(tx)$ for all $x\in X$ and $s,t\in G$, where $e$ is the identity of $G$;
\item Lagrange stability: $Gx$ is relatively compact (i.e. $\overline{Gx}$ is a compact set) in $X$ for all $x\in X$.
\end{enumerate}
\end{enumerate}

If the phase space $X$ is compact itself, then $(G,X)$ is Lagrange stable.
Here a Hausdorff para-topological group need not be a topological group; see, e.g., \cite[(4.20a)]{HR}. However, if $t_n\to t$ and $t_n^{-1}\to\tau$ in $G$, then $\tau=t^{-1}$; moreover, if $G$ is a locally compact Hausdorff para-topological group, then it is a topological group by Ellis' Joint Continuity Theorem.

If $\textrm{Homeo}\,(X)$ stands for the set of self homeomorphisms of $X$ endowed with the topology of uniform convergence on compacta, then $\textrm{Homeo}\,(X)$ is a Hausdorff para-topological group such that $\textrm{Homeo}\,(X)\times X\xrightarrow{(f,x)\mapsto f(x)}X$ is a flow having no Lagrange stability in general.

\begin{se}\label{R0.1}
It is well known that every recurrent point is almost periodic (in fact periodic) for any continuous-time flow on the plane \cite{NS}. In \cite[p.~764]{G46} W.\,H.~Gottschalk suggested an interesting question that is to determine conditions under which pointwise recurrence of a flow implies pointwise almost periodicity.

The main aim of this paper is to consider the relationships of the following important dynamics of $(G,X)$:
\begin{enumerate}[(1)]
\item The pointwise ``recurrence'' in certain sense (cf.~Def.~\ref{R2.2}).
\item The pointwise almost periodicity (cf.~Def.~\ref{R1.5}a).
\item $X$ is a union of minimal sets.
\item The orbit closure relation is closed (cf.~Def.~\ref{R1.3}).
\item The orbit closure mapping is $\ulcorner$\!continuous\!$\urcorner$ on $X$ (cf.~Def.~\ref{R1.2}a).
\item The orbit closure mapping is continuous on $X$ (cf.~Def.~\ref{R1.2}b).
\item The local weak almost periodicity (cf.~Def.~\ref{R1.5}b).
\end{enumerate}

It is evident that $(6)\Rightarrow(5)\Leftrightarrow(4)\Rightarrow(3)\Leftrightarrow(2)\Rightarrow(1)$. However, even for $G=\mathbb{Z}$ or $\mathbb{R}$ and $X$ a compact metric space, that ``$(1)\Rightarrow(5)$'' is not generally true is shown by very simple examples.
On the other hand, we consider the following problem, which is motivated by Furstenberg's work~\cite{F}.
\begin{enumerate}[(8)]
\item \textit{If $(G,X)$ is a distal\,(cf.~Def.~\ref{R1.6}) compact non-minimal flow, does $(G,X)$ have an equicontinuous factor?}
\end{enumerate}
\end{se}

\begin{se}
Note that if $(G,X)$ is distal and the orbit closure mapping of $(G,X\times X)$ is continuous, then it is not difficult to prove that the regionally proximal relation $\textrm{RP}$ of $(G,X)$ is equal to $\Delta_X$ so that $(G,X)$ is equicontinuous \cite{E58} (cf.~Theorem~\ref{R3.1.1}). In McMahon and Wu \cite[(3.4)]{MW76}, there exists an example of a countable group $G$ acting on a 0-dimensional compact $X$ such that $(G,X)$ is minimal distal non-equicontinuous (also cf.~Example~\ref{R3.1.5}). So in this example, $(G,X\times X)$, with 0-dimensional compact phase space $X\times X$ and with countable (so $\sigma$-compact) phase group $G$, satisfies that $(1)\not\Rightarrow(5)$ and $(3)\not\Rightarrow(5)$.
Thus we need to impose some restrictive condition on the phase group $G$ for $(1)\textrm{ or }(3)\Rightarrow(5)$.

On the other hand, there are $\mathbb{R}$-flows and $\mathbb{Z}$-flows that are distal non-equicontinuous with compactly generated phase groups \cite{AGH, F} so that (1) or (2) $\not\Rightarrow$ (6) in \ref{R0.1}. Thus we also need impose some restrictive condition on the phase space $X$.

Let $X$ be a 0-dimensional compact space. In \cite[Theorems~5 and 6]{G46}, Gottschalk shows that $(1)\Rightarrow(5)\Leftrightarrow(7)$ for $\mathbb{Z}_+$-flow on $X$, and see also \cite[Theorem~7.12]{AD} for $\mathbb{Z}$-flow on $X$.

Recall that $G$ is \textit{generative} provided that $G$ is abelian and generated by some compact neighborhood of $e$ (cf.~\cite[Def.~6.01]{GH}). Using ``replete semigroup'', in \cite{GH} Gottschalk and Hedlund formulate a definition of recurrence. Then, with this notion, they prove that $(1) \Rightarrow (5)\Leftrightarrow(7)$ for each flow with generative phase group and with 0-dimensional phase space (cf.~\cite[Theorems~7.07 and 7.08]{GH}).

In \cite{AGW}, Auslander et al. introduce a definition of recurrence in terms of ``cone''. With their notion of recurrence, they show that $(1) \Rightarrow (5)$ for every flow $(G,X)$ with finitely generated phase group $G$ not necessarily abelian and with 0-dimensional compact metric phase space $X$ (cf.~\cite[Theorem~1.8]{AGW} and Proposition~\ref{5.5} below).

We will formulate a recurrence and extend the theorem of Auslander et al. \cite[Theorem~1.8]{AGW} to any flow $(G,X)$ with compactly generated phase group $G$ not necessarily countable or abelian, and, with 0-dimensional phase space $X$ not necessarily compact metrizable, as follows:

\begin{thmA}[see Thm.~\ref{R2.6}a for the full statement]
Let $(G,X)$ be a flow, where $G$ is compactly generated and $X$ is 0-dimensional (cf.~Def.~\ref{R1.7}). Then properties (1)\,--\,(7) in \ref{R0.1} are pairwise equivalent.
\end{thmA}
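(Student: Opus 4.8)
The implications $(6)\Rightarrow(5)\Leftrightarrow(4)\Rightarrow(3)\Leftrightarrow(2)\Rightarrow(1)$ are already recorded in \ref{R0.1}, so to close the cycle I plan to prove the two upward implications $(1)\Rightarrow(2)$ and $(3)\Rightarrow(6)$, and then to attach $(7)$ through the classical equivalence $(5)\Leftrightarrow(7)$. Two standing reductions are used throughout. From compact generation I fix a compact symmetric neighbourhood $K=K^{-1}$ of $e$ with $G=\bigcup_{n\ge1}K^{n}$ and set $|g|=\min\{n:g\in K^{n}\}$; this length filtration is the only feature of $G$ that the argument touches. From $\dim X=0$ together with local compactness I use, at every point, a neighbourhood basis of compact-open sets, so that for a clopen $U$ the indicator $\mathbbm{1}_{U}$ is continuous and every return set $N(x,U)=\{t\in G:tx\in U\}$ behaves like the trace of a subshift on the Lagrange-stable orbit closures.

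The implication $(1)\Rightarrow(2)$ is the core and the principal obstacle; it is the extension of \cite[Thm.~1.8]{AGW} to the present generality. A point is almost periodic (Def.~\ref{R1.5}a) precisely when, for every compact-open neighbourhood $U$ of it, the set $N(x,U)$ is left-syndetic, i.e. $G=FN(x,U)$ for some compact $F$; so the task is to upgrade recurrence to syndeticity of all clopen return sets. I would argue contrapositively: if $N(x,U)$ fails to be syndetic for some clopen $U\ni x$, then its gaps are unbounded in the length filtration, and Lagrange stability yields a net $g_{i}x\to y$ with the whole orbit of $y$ avoiding $U$, that is $Gy\cap U=\varnothing$. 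The cone-type recurrence of Definition~\ref{R2.2} is designed to contradict exactly this: it forces returns to $U$ inside every cone of the length geometry of $G$, and compact generation makes the family of directions at each level compact, so one can extract a recurrence-return of the form $t_{i}=h_{i}g_{i}$ with $h_{i}$ ranging in a fixed $K^{m}$. Passing to a subnet $h_{i}\to h$ and using that $U$ is clopen then gives $hy\in U$, the desired contradiction; here zero-dimensionality is indispensable, since only for clopen $U$ does $t_{i}x\in U$ together with $t_{i}x\to hy$ force $hy\in U$.

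For $(3)\Rightarrow(6)$ I would verify continuity of $x\mapsto\overline{Gx}$ into $2^{X}$ (Def.~\ref{R1.2}b). Lower semicontinuity is automatic from joint continuity of the action and Lagrange stability, so the content is upper semicontinuity. Given $x$ and a compact-open $V\supseteq\overline{Gx}$, I consider the nested sets $V_{n}=\{y\in X:K^{n}y\subseteq V\}$. Each $V_{n}$ is compact-open: its complement $\bigcup_{g\in K^{n}}g^{-1}(X\setminus V)$ is open because each $g$ acts homeomorphically, and is closed as the projection along the compact factor $K^{n}$ of the clopen set $\{(g,y):gy\in X\setminus V\}$. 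Since $V_{n+1}=\{y:Ky\subseteq V_{n}\}$, the first coincidence $V_{N}=V_{N+1}$ already forces $V_{N}=V_{m}$ for all $m\ge N$, so that $W:=V_{N}$ is $K$-invariant, hence $G$-invariant, and then $\overline{Gy}\subseteq V$ for every $y\in W$, giving upper semicontinuity at $x$. The decisive, and hardest, point is to produce this coincidence, i.e. the finiteness of the nest, which is exactly where compact generation is used: without it the nest need not stabilize, as the McMahon--Wu example shows, where on $X\times X$ (with merely $\sigma$-compact $G$) property $(3)$ holds while the orbit-closure map is discontinuous.

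Finally I would place $(7)$ by proving $(5)\Leftrightarrow(7)$ in the manner of Gottschalk: on a zero-dimensional phase space, $\ulcorner$continuity$\urcorner$ of the orbit-closure map (Def.~\ref{R1.2}a) and local weak almost periodicity (Def.~\ref{R1.5}b) are two readings of the same condition on $G$-invariant compact-open sets, so this equivalence becomes essentially formal once the invariant clopen sets manufactured in the previous paragraph are available. I expect the two genuinely hard points to be the passage from recurrence to syndeticity in $(1)\Rightarrow(2)$ and the stabilization of the nest $V_{n}$ in $(3)\Rightarrow(6)$; the latter is where compact generation is truly indispensable, while everywhere the non-compact, non-metrizable, non-abelian setting forces nets and the length filtration of $K$ in place of sequences and Cayley-graph combinatorics, with every compactness step routed through Lagrange stability rather than through compactness of $X$ itself.
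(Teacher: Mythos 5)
Your plan correctly locates the two hard implications and closes the cycle in a logically valid way, but neither hard implication is actually carried out, and the mechanism you propose for the first one does not work as stated. For $(1)\Rightarrow(2)$ you argue contrapositively: non-syndeticity of $N_G(x,U)$ gives witnesses $g_n$ with $\Gamma^n g_n x\cap U=\emptyset$ and $g_nx\to y$, $Gy\cap U=\emptyset$; you then want to ``extract a recurrence-return of the form $t_i=h_ig_i$ with $h_i$ ranging in a fixed $K^m$.'' Type-I recurrence gives no such thing: it produces a single $c\in\C$ with $cx\in U$, where $c=\lim_j h_jg_{i_j}$ and $h_j\in\Gamma^{|g_{i_j}|-1}$ has \emph{unbounded} length, so there is no subnet $h_j\to h$ and no way to form $hy$. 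A bounded-prefix return is essentially $\Gamma$-recurrence of type II, and the paper explicitly flags (Remark~\ref{R2.8}(1)) that the equivalence of the two recurrences is one of the non-trivial outputs of the theorem, not an input. There is a second mismatch: your gap-witnesses satisfy $\Gamma^n g_n x\cap U=\emptyset$, but the cone is built from $\K(g_n)=\Gamma^{|g_n|-1}g_n$, and nothing relates $|g_n|$ to $n$, so the cone need not avoid $N_G(x,U)$ at all. The paper's Lemma~\ref{R2.5} evades both problems by a different device: it takes points $x_i\to x$ inside $U$ but outside $U_G^\infty$, chooses $g_i$ of \emph{minimal} $\Gamma$-length with $g_ix_i\notin U$ (so that every $k\in\Gamma^{|g_i|-1}$ keeps $x_i$ in $U$), and then applies recurrence not at $x$ but at the escaped limit $y=\lim g_ix_i\notin U$, using the cone of the \emph{inverses} $\K(g_i^{-1})$: for $c=\lim k_jg_{i_j}^{-1}$ one gets $cy=\lim k_jx_{i_j}\in\overline U$, so $\C y\cap(X\setminus U)=\emptyset$ contradicts recurrence of $y$. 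That inversion-plus-minimality trick is the missing idea.

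For $(3)\Rightarrow(6)$ your nest $V_n=\{y:\Gamma^ny\subseteq V\}$ is a decreasing sequence of compact-open sets with $\bigcap_nV_n=V_G^\infty$, and a routine compactness argument shows the nest stabilizes if and only if $V_G^\infty$ is open. But openness of $V_G^\infty$ for clopen $V$ is precisely condition (8) of Theorem~\ref{R2.6}a, i.e.\ the crux of the entire theorem; declaring the stabilization ``the decisive, and hardest, point'' and citing the McMahon--Wu example does not supply a proof. Note also that the paper does not prove two hard implications: it proves the single implication (1)$\Rightarrow$(8) via Lemma~\ref{R2.5} and reaches upper semi-continuity, $\ulcorner$continuity$\urcorner$, closedness of $R_o$, minimality of orbit closures, almost periodicity, and local weak almost periodicity (Lemma~\ref{R2.4}) around one cycle; your decomposition would force you to do the Lemma~\ref{R2.5}-type work twice. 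A smaller point: you take $K$ to be a compact symmetric \emph{neighbourhood} of $e$; the paper's $G$ need not be locally compact and a compact generating set need not be a neighbourhood of $e$ (see \ref{R1.7}), so the length filtration must be taken with respect to an arbitrary compact symmetric generating set.
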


\begin{w}
The orbit closure relation being closed or orbit closure mapping $\ulcorner$\!continuous\!$\urcorner$ $\not\Rightarrow$ orbit closure mapping being upper semi-continuous in general non-compact flows (see Theorem~\ref{R2.6}c).
\end{w}

Comparing with the case $X$ compact, we have to face the essential point that a net in $X$ need not have a convergent subnet. In addition, comparing with the case $G=\mathbb{Z}_+$, for any point $x$ and any closed subset $U$ of $X$ we have no the ``first visit time'' $t\in G$ with $tx\in U$.

In addition, as an application of Theorem~A, we can conclude the following using the component relation:
\begin{enumerate}[(9)]
\item \textit{Every distal, compact, and non-connected flow has a non-trivial equicontinuous pointwise regularly almost periodic factor if the phase group is compactly generated like $\mathbb{Z}$ and $\mathbb{R}$ (see Thm.~\ref{R3.1.2} and Corollary to Thm.~\ref{R3.1.3}).} See \ref{R1.5}c for the definition of regular almost periodicity.
\end{enumerate}

If a flow $(G,X)$ is pointwise product almost periodic (i.e. $(G,X\times X)$ is pointwise almost periodic) then $(G,X)$ is distal (cf.~\cite[Theorem~1]{E58}); and moreover, if in addition $G$ is generative and $X$ is 0-dimensional, then $(G,X)$ is equicontinuous (cf.~\cite[Theorem~2]{E58}). Now we shall improve Ellis' theorems as follows:

\begin{thmB}[{see Thm.~\ref{R3.1.1} and Thm.~\ref{R3.1.3}}]
Let $(G,X)$ be any flow with $G$ compactly generated and with $\dim X=0$. If $(G,X)$ is pointwise product recurrent, then $(G,X)$ is pointwise regularly almost periodic and equicontinuous.
\end{thmB}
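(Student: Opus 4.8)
The plan is to bootstrap from $(G,X)$ to the diagonal product flow $(G,X\times X)$, invoke Theorem~A there, and then read off the two conclusions. First I would note that $(G,X\times X)$ is again a flow of the kind to which Theorem~A applies: $G$ is the same compactly generated group, $X\times X$ is $0$-dimensional (a product of two $0$-dimensional locally compact Hausdorff spaces), and the diagonal action is Lagrange stable because $\overline{G(x,y)}\subseteq\overline{Gx}\times\overline{Gy}$ is compact. By hypothesis $(G,X)$ is pointwise product recurrent, i.e.\ $(G,X\times X)$ satisfies property (1) of \ref{R0.1}; hence Theorem~A forces \emph{all} of properties (1)--(7) to hold for $(G,X\times X)$. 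In particular (2) $(G,X\times X)$ is pointwise almost periodic and (6) the orbit closure mapping of $(G,X\times X)$ is continuous.

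Since $(G,X\times X)$ is pointwise almost periodic, the flow $(G,X)$ is distal by Ellis' theorem \cite[Theorem~1]{E58}. For equicontinuity I would then show $\mathrm{RP}(G,X)=\Delta_X$. Take $(x,y)\in\mathrm{RP}$, with nets $x_i\to x$, $y_i\to y$, $t_i\in G$ such that $t_ix_i$ and $t_iy_i$ are asymptotically $\alpha$-close for every index $\alpha$. Because the product orbit closure mapping is continuous at $(x,y)$, the compact sets $\overline{G(x_i,y_i)}$ stay within a fixed compact neighbourhood of $\overline{G(x,y)}$, so after passing to a subnet $t_ix_i\to z$ and $t_iy_i\to z$ for a common $z$; continuity of that mapping then yields $(z,z)\in\overline{G(x,y)}$. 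Thus there is a net $s_j\in G$ with $s_jx\to z$ and $s_jy\to z$, i.e.\ $(x,y)$ is proximal, whence $x=y$ by distality. Therefore $\mathrm{RP}(G,X)=\Delta_X$ and $(G,X)$ is equicontinuous, which is the content of Theorem~\ref{R3.1.1}.

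For pointwise regular almost periodicity I would pass back to $(G,X)$ itself. Its first projection is a factor of the pointwise almost periodic flow $(G,X\times X)$, so $(G,X)$ is pointwise almost periodic and $X$ is a union of $G$-minimal sets, each compact by Lagrange stability; and we have just shown $(G,X)$ is equicontinuous with $\dim X=0$. On each minimal set $M$ the enveloping (Ellis) group of the equicontinuous flow $(G,M)$ is a compact topological group acting transitively and equicontinuously, and I would combine this with $0$-dimensionality to produce, for every index $\alpha$, a $G$-invariant finite clopen partition of $M$ subordinate to $\alpha$, exhibiting $M$ as an inverse limit of finite $G$-systems. Concretely, refine an $\alpha$-small clopen cover to a clopen cell $C$, observe that equicontinuity makes $g\mapsto gC$ locally constant on the compact enveloping group and hence of finite range, so the stabiliser of $C$ is syndetic and its orbit stays inside $\alpha[x]$; this exhibits every point as regularly almost periodic, which is Theorem~\ref{R3.1.3}.

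The main obstacle is this last construction of $G$-invariant finite clopen partitions in full generality. Equicontinuity and $0$-dimensionality must be married carefully when $X$ is non-compact, $G$ is non-abelian and possibly uncountable, and it is compact generation that keeps the relevant return-time sets syndetic; controlling the enveloping group and verifying that the stabiliser of a clopen cell is genuinely syndetic (rather than merely of ``finite index'' in a naive sense) is where the real work lies. By contrast, the equicontinuity step (the $\mathrm{RP}\subseteq P$ argument) is comparatively routine once Theorem~A has been applied to the product flow to supply distality together with continuity of the product orbit closure mapping.
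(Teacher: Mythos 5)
Your overall strategy --- apply Theorem~A to the diagonal flow $(G,X\times X)$, which you correctly check is a Lagrange-stable $0$-dimensional flow with the same compactly generated phase group, and then harvest equicontinuity and regular almost periodicity --- is exactly the paper's. But two steps do not close as written. For equicontinuity, you prove $\textrm{RP}(G,X)=\Delta_X$ and then assert equicontinuity. That inference is not valid in the present locally compact, non-compact setting: Note~3 to Theorem~\ref{R3.1.1} states explicitly that pointwise regional distality need not imply equicontinuity when $X$ is not compact, and Example~\ref{R3.3.1} (concentric circles with rotation angles $r_n\downarrow 0$) is a distal flow with $\textrm{RP}=\Delta_X$ that fails to be equicontinuous on the limit circle. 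To repair it you must re-use the upper semi-continuity of the product orbit-closure mapping in the converse direction: given a failure of equicontinuity at $x_0$, i.e.\ nets $(x_n,x_n')\to(x_0,x_0)$, $t_n\in G$ and an index $\varepsilon$ with $t_n(x_n,x_n')\notin\varepsilon$, use $\overline{G(x_n,x_n')}=\overline{Gt_n(x_n,x_n')}$ and the $\ulcorner$continuity$\urcorner$ of $\mathscr{O}_G$ on $X\times X$ to force a limit point of $t_n(x_n,x_n')$ into $\overline{G(x_0,x_0)}\subseteq\Delta_X\subseteq\varepsilon$, a contradiction. This is precisely Lemma~\ref{R1.4}c of the paper (closedness of $R_o(X\times X)$ is equivalent to equicontinuity of $(G,X)$), and once you have it the detour through $\textrm{RP}$ is superfluous: Theorem~\ref{R2.6}a applied to the product flow gives $R_o(X\times X)$ closed, and Lemma~\ref{R1.4}c finishes.

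For pointwise regular almost periodicity the construction is not actually carried out: you yourself flag the production of $G$-invariant finite clopen partitions as ``the main obstacle'' and ``where the real work lies,'' which concedes the gap. Moreover, even granting a clopen cell $C\ni x$ inside $\alpha[x]$ whose stabilizer is discretely syndetic, the definition in \ref{R1.5}c demands a syndetic \emph{normal} subgroup of $G$ inside $N_G(x,U)$, and the stabilizer of a cell need not be normal; you would still need Lemma~\ref{R1.5}d(2) to pass to $\bigcap_{t\in G}t^{-1}At$. The paper's Theorem~\ref{R3.1.3} resolves both points at once: restrict to the minimal set $\overline{Gx}$, observe that the enveloping semigroup $E(\overline{Gx})$ of the (now known to be) equicontinuous minimal compact $0$-dimensional flow is a compact topological group in which $\mathbb{U}=\{p\,|\,px\in U\}$ is a clopen neighborhood of the identity for clopen $U\in\mathfrak{N}_x$, and invoke Hewitt--Ross \cite[Theorem~7.6]{HR} to obtain a clopen \emph{normal} subgroup $N\subseteq\mathbb{U}$ of finite index; then $H=N\cap G$ is a discretely syndetic normal subgroup of $G$ with $Hx\subseteq U$. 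Replacing your partition sketch by this appeal to the structure of compact $0$-dimensional groups is what turns the outline into a proof.
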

\end{se}

\begin{rem}
It should be mentioned that a part of Theorem~A has been proved by Reid \cite{R2} under the framework that $G$ is ``equicontinuously generated'' by a subset $S$ of $\textrm{Homeo}\,(X)$; that is, $G=\bigcup_{r\ge1} S^r$, where $\textrm{id}_X\in S=S^{-1}$, and $S$ is equicontinuous. However, our approaches are different completely with and much simpler than Reid \cite[Theorems~1.2, 1.3]{R2} of using very technical arguments of compact-open invariant sets. It turns out that Theorem~C concisely proved here implies the ``equicontinuously generated'' case of Reid~\cite{R2} (see Propositions~\ref{R5.5} and \ref{R5.6} for alternative simple proofs of Reid's results).
\end{rem}

\begin{Sn}
By \textcircled{b} there is a uniformity structure, denoted $\mathscr{U}_X$ or $\mathscr{U}$, on $X$. In the sequel, $\mathfrak{N}_x$ stands for the neighborhood filter at $x$ of $X$ and $N_G(x,U)=\{t\,|\,t\in G, tx\in U\}$ for all $x\in X$ and all $U\in\mathfrak{N}_x$.
\end{Sn}
%%%%%%%%%%%%%%%%%%%%%%%%%%%%%%%%%%%%%%
%%%%%%%%%%%%%%%%%%%%%%%%%%%%%%%%%%%%%%
\section{Preliminaries}\label{secR1}

\begin{se}\label{R1.1}
If $Y$ is a Hausdorff space then $2^Y$ will stand for the collection of closed subsets of $Y$. For a net $\{Y_i\}$ in $2^Y$ and $K\subseteq Y$, we say `$Y_i\rightharpoonup K$ in $2^Y$', denoted $\limsup_iY_i=K$, if
$$K=\{y\in Y\,|\,\exists\textrm{ a subnet }\{Y_{i_k}\}\textrm{ from }\{Y_i\}\textrm{ and }y_{i_k}\in Y_{i_k}\textrm{ s.t. }y_{i_k}\to y\}.$$
It turns out that in the case of $\rightharpoonup$, every net $\{K_i\}$ in $2^Y$ converges and $\limsup_iK_i\in 2^Y$. However, it is possible that $\limsup_iK_i=\emptyset$.

\begin{lem*}
If $K_i\rightharpoonup K$ in $2^Y$ and $\{K_{i_n}\}$ is any subnet of $\{K_i\}$ with $K_{i_n}\rightharpoonup K^\prime$, then $K^\prime\subseteq K$. Moreover, if $K_i$ is invariant for all $i$, then $\limsup_iK_i$ is also invariant.
\end{lem*}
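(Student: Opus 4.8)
The plan is to handle the two assertions separately, in each case by unwinding the defining description of $\rightharpoonup$: a point lies in the $\rightharpoonup$-limit of a net of closed sets exactly when it is the limit of some selection $y_i\in K_i$ taken along a subnet.

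For the first assertion I would take an arbitrary $y\in K^\prime$ and produce it as a member of $K$. By the definition of $K_{i_n}\rightharpoonup K^\prime$ there are a subnet $\{K_{i_{n_m}}\}$ of $\{K_{i_n}\}$ and points $y_{i_{n_m}}\in K_{i_{n_m}}$ with $y_{i_{n_m}}\to y$. The one thing to observe is that $\{K_{i_{n_m}}\}$ is itself a subnet of the original net $\{K_i\}$: a subnet of a subnet is a subnet, obtained simply by composing the two index maps. Hence the selection $y_{i_{n_m}}\in K_{i_{n_m}}$ converging to $y$ witnesses $y\in K$ directly from $K_i\rightharpoonup K$, and therefore $K^\prime\subseteq K$.

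For the second assertion I would first recall that $K=\limsup_iK_i$ is closed, this being part of the standing fact that $\rightharpoonup$-limits always exist in $2^Y$; so it remains only to verify $GK=K$. The inclusion $K=eK\subseteq GK$ is automatic, so the content is $GK\subseteq K$. Fixing $y\in K$ and $t\in G$, I choose as before a subnet $\{K_{i_k}\}$ of $\{K_i\}$ and points $y_{i_k}\in K_{i_k}$ with $y_{i_k}\to y$. Invariance of each $K_i$ gives $ty_{i_k}\in tK_{i_k}=K_{i_k}$ (note that for a group action the forward condition $tK_i\subseteq K_i$ already forces $tK_i=K_i$, since applying $t^{-1}$ yields the reverse inclusion), while joint continuity of the action, with the element $t$ held fixed, yields $ty_{i_k}\to ty$. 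Thus the same subnet together with the new selection $ty_{i_k}\in K_{i_k}$ witnesses $ty\in K$, whence $GK\subseteq K$ and $K$ is invariant.

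The proof is in essence bookkeeping, and the only genuine care it demands comes from the generality of the setting: since $Y$ and $X$ need not be metrizable, no step may rely on extracting a convergent subsequence, and every selection and limit must be phrased with nets. The composition-of-subnets observation is exactly what makes the first assertion go through, and in the invariance argument the sole substantive input is joint continuity of the action, which guarantees that $x\mapsto tx$ is continuous for the fixed $t$ so that $ty_{i_k}\to ty$; the possible discontinuity of inversion in the para-topological group $G$ causes no trouble here, because $t$ remains fixed throughout.
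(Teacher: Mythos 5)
Your proof is correct; the paper in fact states this lemma without proof, treating it as a routine consequence of the definition of $\rightharpoonup$, and your argument (subnet-of-a-subnet is a subnet for the first claim; transport the selection $y_{i_k}\in K_{i_k}$ to $ty_{i_k}\in K_{i_k}$ using continuity of $x\mapsto tx$ for fixed $t$ for the second) is exactly the verification the author leaves to the reader. Your remarks that only nets may be used and that discontinuity of inversion is harmless because $t$ is fixed are both apt in this para-topological, non-metrizable setting.
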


If $Y$ is a discrete space and $\{Y_i\}_{i=1}^\infty$ with $Y_i\rightharpoonup K$, then $K=\bigcap_{n\ge1}\bigcup_{i\ge n}Y_i=\limsup_{i\to\infty}Y_i$; in other words, $K=\{Y_i\ \textrm{i.o.}\}$. We will concern with $Y=G$ or $X$ for a flow $(G,X)$.
\end{se}

\begin{se}\label{R1.2}
Associated to $(G,X)$ we define the `orbit closure mapping' $\mathscr{O}_G\colon X\rightarrow 2^X$ by $x\mapsto \overline{Gx}$. Then we say that:
\begin{enumerate}[a.]
\item $\mathscr{O}_G$ is \textit{$\ulcorner$\!continuous\!$\urcorner$}, provided that if $x_i\to x$ in $X$, then $\overline{Gx_i}\rightharpoonup\overline{Gx}$ in $2^X$.
\item $\mathscr{O}_G$ is \textit{upper semi-continuous}, provided that for every $x\in X$ and every neighborhood $U$ of $\overline{Gx}$ there is a $V\in\mathfrak{N}_x$ such that $\overline{Gy}\subseteq U$ for all $y\in V$. Further, $\mathscr{O}_G$ is called \textit{continuous} if $x_i\to x$ in $X$ implies that $\overline{Gx_i}\to\overline{Gx}$ in $2^X$ with the Hausdorff topology.
\end{enumerate}

\begin{lem*}
If $\mathscr{O}_G$ is upper semi-continuous, then $\mathscr{O}_G$ is $\ulcorner$\!continuous\!$\urcorner$, and moreover, $\mathscr{O}_G$ is continuous.
\end{lem*}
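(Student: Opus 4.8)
The plan is to treat the two asserted conclusions together, noting that one inclusion comes ``for free'' from joint continuity of the action together with Lagrange stability, while upper semi-continuity is exactly what is needed to control the orbit closures from above. Throughout, fix a net $x_i\to x$ in $X$ and recall from \ref{R1.1} that $\limsup_i\overline{Gx_i}$ is automatically closed.

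First I would establish that $\mathscr{O}_G$ is $\ulcorner$\!continuous\!$\urcorner$, i.e. $\limsup_i\overline{Gx_i}=\overline{Gx}$. For $\overline{Gx}\subseteq\limsup_i\overline{Gx_i}$: for each fixed $t\in G$, joint continuity gives $tx_i\to tx$ with $tx_i\in\overline{Gx_i}$, so every point of $Gx$ lies in $\limsup_i\overline{Gx_i}$ (via the trivial subnet); since this $\limsup$ is closed, all of $\overline{Gx}$ lies in it. For the reverse inclusion I would invoke upper semi-continuity: given $y\in\limsup_i\overline{Gx_i}$, choose a subnet with $y_{i_k}\in\overline{Gx_{i_k}}$ and $y_{i_k}\to y$; if $y\notin\overline{Gx}$, then since $\overline{Gx}$ is compact (Lagrange stability) and $X$ is Hausdorff we may separate $\overline{Gx}\subseteq U$ and $y\in W$ by disjoint open sets, after which upper semi-continuity supplies $V\in\mathfrak{N}_x$ with $\overline{Gz}\subseteq U$ for all $z\in V$. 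As $x_{i_k}\in V$ eventually, we get $y_{i_k}\in U$ eventually, contradicting $y_{i_k}\to y\in W$. Hence $\limsup_i\overline{Gx_i}=\overline{Gx}$.

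Next I would upgrade this to Hausdorff continuity, i.e. that for every symmetric entourage $U\in\mathscr{U}$ one has, eventually, both $\overline{Gx_i}\subseteq U[\overline{Gx}]$ and $\overline{Gx}\subseteq U[\overline{Gx_i}]$. The \emph{upper} half is immediate from upper semi-continuity applied to the neighborhood $U[\overline{Gx}]$ of $\overline{Gx}$. The \emph{lower} half is the crux and, notably, uses only joint continuity and Lagrange stability: choosing a symmetric open $U_1$ with $U_1\circ U_1\subseteq U$, the open sets $U_1[tx]$ $(t\in G)$ cover the compact set $\overline{Gx}$, so finitely many $t_1x,\dots,t_nx$ suffice; joint continuity then yields $(t_jx,t_jx_i)\in U_1$ for all $j$ eventually, and a two-step estimate through the relevant $t_jx_i\in\overline{Gx_i}$ gives $\overline{Gx}\subseteq U[\overline{Gx_i}]$. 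Combining the two halves yields $\overline{Gx_i}\to\overline{Gx}$ in the Hausdorff topology.

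The main obstacle I anticipate is precisely this lower half. Because $X$ need not be compact, a net of points $z_i\in\overline{Gx}$ need not admit a convergent subnet, so I cannot argue by extracting limits of arbitrary points of $\overline{Gx}$; instead I must exploit compactness of the \emph{target} $\overline{Gx}$ to reduce to finitely many orbit points $t_jx$ and then transport the estimate along the action by joint continuity. Everything else reduces to routine neighborhood-filter and separation bookkeeping once this finite reduction is in place.
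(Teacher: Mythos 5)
Your proposal is correct and follows essentially the same route as the paper: the inclusion $\overline{Gx}\subseteq\limsup_i\overline{Gx_i}$ from joint continuity of the action, the reverse inclusion from upper semi-continuity together with compactness of $\overline{Gx}$ and Hausdorff separation, and then the upgrade to Hausdorff convergence via lower semi-continuity coming from the action. The only difference is that you spell out, via the finite cover of $\overline{Gx}$ by sets $U_1[t_jx]$, the step the paper compresses into the one-line remark that $\mathscr{O}_G$ is ``automatically lower semi-continuous''; this added detail is welcome rather than a deviation.
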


\begin{proof}
If $x_i\to x$ in $X$ and $\overline{Gx_i}\rightharpoonup L$ in $2^X$, then $\overline{Gx}\subseteq L$ for $L$ is closed $G$-invariant and $x$ is in $L$. On the other hand, if we additionally assume $\mathscr{O}_G$ is upper semi-continuous, then for every neighborhood $V$ of $\overline{Gx}$ we have $\overline{Gx_i}\subseteq V$ eventually. Since $\overline{Gx}$ is compact and $X$ is Hausdorff, $\overline{Gx_i}$ is eventually separated from any point outside $\overline{Gx}$, so $L\subseteq\overline{Gx}$. Thus $\mathscr{O}_G$ is $\ulcorner$\!continuous\!$\urcorner$.

On the other hand, since the phase mapping $(t,x)\mapsto tx$ of $G\times X$ onto $X$ is jointly continuous, $\mathscr{O}_G$ is automatically lower semi-continuous (i.e. $\forall x^\prime\in\overline{Gx}$ $\exists x_i^\prime\in\overline{Gx_i}$ s.t. $x_i^\prime\to x^\prime$ for $x_i\to x$), $\mathscr{O}_G$ is continuous. The proof is completed.
\end{proof}

Given $U\subset X$, define invariant sets
\begin{enumerate}[c.]
\item $U_G^*=\{x\in X\,|\,\overline{Gx}\subseteq U\}$ and $U_G^\infty=\bigcap_{g\in G}gU$.
\end{enumerate}
If $U$ is closed, then $U_G^*=U_G^\infty$ is closed. However, $U_G^*\varsubsetneq U_G^\infty$ in general.
\begin{enumerate}[d.]
\item $(G,X)$ is \textit{equicontinuous at a point} $x\in X$ if given $\varepsilon\in\mathscr{U}$ there is $U\in\mathfrak{N}_x$ such that $gU\subseteq\varepsilon[gx]$ for all $g\in G$. $(G,X)$ is \textit{equicontinuous} iff $(G,X)$ is pointwise equicontinuous.
\end{enumerate}
Clearly, if $(G,X)$ is equicontinuous at $x_0\in X$ then $\mathscr{O}_G\colon X\rightarrow 2^X$ is continuous at the point $x_0$. So if $(G,X)$ is equicontinuous, then $\mathscr{O}_G$ is continuous on $X$.

Note that equicontinuity is independent of the topology of $G$, but it depends on the specific uniform structure $\mathscr{U}$ rather than merely on the topology of $X$.
\end{se}

\begin{se}\label{R1.3}
The `orbit-closure relation $R_o(X)$ of $(G,X)$' is defined by $(x,y)\in R_o(X)$ iff $y\in\overline{Gx}$.
\begin{enumerate}[a.]
\item \textit{If $R_o(X)$ is closed, then $R_o(X)$ is symmetric, i.e., $R_o=R_o^{-1}$.}
\end{enumerate}
\begin{proof}
This is because for $(x,y)\in R_o(X)$ there is a net $t_n\in G$ with $t_nx\to y$ and $(t_nx,x)\in R_o(X)$ so that $(t_nx,x)\to(y,x)\in R_o(X)$.
\end{proof}

\begin{enumerate}[b.]
\item \textit{$\mathscr{O}_G$ is $\ulcorner$\!continuous\!$\urcorner$ on $X$ if and only if $R_o(X)$ is closed in $X\times X$.} (Note: Lagrange stability is surplus here.)
\end{enumerate}
\begin{proof}
Indeed, suppose $\mathscr{O}_G$ is $\ulcorner$\!continuous\!$\urcorner$, and, let $(x_i,y_i)\in R_o(X)\to(x,y)$. Then $\overline{Gx_i}\rightharpoonup\overline{Gx}$ by \ref{R1.2}a) and $y_i\in\overline{Gx_i}$ with $y_i\to y$. So $y\in\overline{Gx}$ by \ref{R1.1} and $(x,y)\in R_o(X)$. Thus $R_o(X)$ is closed. Conversely, assume $R_o(X)$ is closed, and, let $x_i\to x$ in $X$ and $\overline{Gx_i}\rightharpoonup\mathfrak{O}$ in $2^X$. If $\overline{Gx}\not=\mathfrak{O}$, then $\overline{Gx}\varsubsetneq\mathfrak{O}$. For $y\in\mathfrak{O}\setminus\overline{Gx}$ there is a net $y_{i_k}\in \overline{Gx_{i_k}}$ with $y_{i_k}\to y$. Since $(x_{i_k},y_{i_k})\in R_o(X)$, so $(x,y)\in R_o(X)$ and $y\in \overline{Gx}$ contrary to $y\in\mathfrak{O}\setminus\overline{Gx}$. Thus $\overline{Gx}=\mathfrak{O}$ and $\mathscr{O}_G$ is $\ulcorner$\!continuous\!$\urcorner$.
\end{proof}

\end{se}

\begin{se}\label{R1.4}
We say that a set $K$ in $X$ is `minimal' under $(G,X)$ if $\overline{Gx}=K$ for all $x\in K$. In this case $K$ is a closed invariant subset of $X$, and moreover, $K$ is compact by (iii) in \textcircled{c}. So every minimal flow is compact in our setting.

\begin{R1.4a}
Let $R_o[y]=\{x\,|\,x\in X, (x,y)\in R_o(X)\}$ for $(G,X)$ and $y\in X$. If $R_o[y]$ is closed, then $\overline{Gy}$ is minimal. Here $(G,X)$ need not be Lagrange stable.
\end{R1.4a}
\begin{note*}
Here $\{x\in X\,|\,(y,x)\in R_o(X)\}$ is always closed but it gives no minimality.
\end{note*}

\begin{proof}
Let $x\in\overline{Gy}$. There is a net $t_n\in G$ with $t_ny\to x$. Since $(t_ny,y)\in R_o(X)$, so $(x,y)\in R_o(X)$ and $y\in\overline{Gx}$. Thus $\overline{Gy}=\overline{Gx}$ is minimal for $x\in\overline{Gy}$ is arbitrary.
\end{proof}

Clearly $R_o(X)$ is symmetric iff $\overline{Gx}$ is minimal for all $x\in X$. Moreover, if $R_o(X)$ is closed, then $R_o(X)$ is an invariant closed equivalence relation on $X$, and, $X/G:=X/R_o(X)$ with the quotient topology is locally compact Hausdorff by Lemma~\ref{R1.4}b below.

\begin{R1.4b}
If $R_o(X)$ is symmetric for $(G,X)$, then the quotient mapping $\rho\colon X\rightarrow X/G$ is open.
\end{R1.4b}
\begin{note*}
$X/G$ need not be Hausdorff if $R_o(X)$ is not closed.
\end{note*}

\begin{proof}
Let $U$ be an open subset of $X$, $U\not=\emptyset$. Then $\rho^{-1}\rho U=GU$. Indeed, for each $x\in U$, choose a compact $V\in\mathfrak{N}_x$ with $V\subseteq U$. Since $x$ is almost periodic and $N_G(x,V)$ is discretely syndetic (cf.~\ref{R1.5}a below), there is a finite set $K\subset G$ such that $Gx\subseteq KV$ and $\overline{Gx}\subseteq K\overline{V}\subseteq GU$. By $\rho^{-1}\rho U=\bigcup_{x\in U}\overline{Gx}$, it follows that $\rho^{-1}\rho U=GU$ is open in $X$. Thus $\rho U$ is open in $X/G$. The proof is complete.
\end{proof}

\begin{R1.4c}[{cf.~\cite[Thm.~1]{G56} for $X$ compact}]
The orbit-closure relation $R_o(X\times X)$ of $(G,X\times X)$ is closed if and only if $(G,X)$ is equicontinuous.
\end{R1.4c}

\begin{proof}
Sufficiency is obvious from \ref{R1.2}d and that if $(G,X)$ is equicontinuous, then so is $(G,X\times X)$.
Conversely, suppose $R_o(X\times X)$ is closed.
Let $x_0\in X$. To show that $(G,X)$ is equicontinuous at $x_0$, suppose the contrary that $(G,X)$ were not equicontinuous at $x_0$. Then there would be nets $(x_n,x_n^\prime)\in X\times X$, $t_n\in G$ and an open index $\varepsilon\in\mathscr{U}$ such that $(x_n,x_n^\prime)\to(x_0,x_0)$ and $t_n(x_n,x_n^\prime)\notin\varepsilon$. By $\overline{G(x_n,x_n^\prime)}=\overline{Gt_n(x_n,x_n^\prime)}$, it follows that
$$
\varepsilon\supseteq\Delta_X\supseteq\overline{G(x_0,x_0)}={\lim}_n\overline{G(x_n,x_n^\prime)}={\lim}_n\overline{Gt_n(x_n,x_n^\prime)}\nsubseteq\varepsilon
$$
which is a contradiction, where $\Delta_X$ is the diagonal of $X\times X$. This contradictions shows that $(G,X)$ is equicontinuous.
\end{proof}

In particular, if $X$ is compact and the relation $R_o(X\times X)$ of $(G,X\times X)$ is closed, then $(G,X)$ is uniformly equicontinuous.
\end{se}

\begin{se}\label{R1.5}
A subset $A$ of $G$ is `syndetic' in $G$ if there exists a compact subset $K$ of $G$ with $G=K^{-1}A$. A set $B$ is said to be `thick' in $G$ if for all compact set $K$ in $G$ there corresponds an element $t\in B$ such that $Kt\subseteq B$. It is a well-known fact that
\begin{enumerate}[$\bullet$]
\item $A$ is syndetic in $G$ iff it intersects non-voidly with every thick subset of $G$.
\end{enumerate}
Although $G$ need not be discrete here, yet if the syndetic/thick is defined in the sense of discrete topology of $G$ then it will be called `discretely syndetic/thick'. For instance, in $\mathbb{R}$ with the usual euclidean topology, $\mathbb{Z}$ is a discrete subgroup and $\mathbb{Z}$ is syndetic; however, $\mathbb{Z}$ is not discretely syndetic in $\mathbb{R}$.

\begin{enumerate}[a.]
\item A point $x\in X$ is `almost periodic' (a.p) under $(G,X)$ if $N_G(x,U)$ is a syndetic subset of $G$ for all $U\in\mathfrak{N}_x$.
Since $\overline{Gx}$ is compact by (iii) in \textcircled{c} here, $x\in X$ is a.p if and only if $\overline{Gx}$ is minimal under $(G,X)$ (cf., e.g.~\cite[Note~2.5.1]{AD}). Thus
\begin{enumerate}[$\bullet$]
\item $N_G(x,U)$ is discretely syndetic in $G$ for every a.p point $x$ of $(G,X)$. In addition, $x$ is a.p iff given $U\in\mathfrak{N}_x$ there is a finite set $K$ in $G$ such that $Gx\subseteq KU$.
\end{enumerate}

\item Following \cite{GH, G56}, we say that:
\begin{enumerate}[1)]
\item $(G,X)$ is `weakly a.p' if given $\alpha\in\mathscr{U}$ there is a compact subset $F$ of $G$ such that $Ftx\cap\alpha[x]\not=\emptyset$ for all $x\in X$ and $t\in G$.
\item $(G,X)$ is `locally weakly a.p' if given $\alpha\in\mathscr{U}$ and $x\in X$ there is a compact subset $F$ of $G$ and a $V\in\mathfrak{N}_x$ such that $Fty\cap\alpha[y]\not=\emptyset$ for all $y\in V$ and all $t\in G$. Equivalently, $(G,X)$ is locally weakly a.p iff for each $x\in X$ and all $U\in\mathfrak{N}_x$ there is a compact set $F$ in $G$ and a set $V\in\mathfrak{N}_x$ such that $GV\subseteq F^{-1}U$.
\end{enumerate}
Clearly, if $X$ is compact, then $(G,X)$ is weakly a.p iff $(G,X)$ is locally weakly a.p; if $(G,X)$ is locally weakly a.p, it is pointwise a.p under $(G,X)$. Moreover, if $(G,X)$ is locally weakly a.p, then $\mathscr{O}_G\colon X\rightarrow 2^X$ is upper semi-continuous (see Lemma~\ref{R2.4}).

\item (see \cite{GH, E58, MR} for $G$ an abelian group).
\begin{enumerate}[1)]
\item A point $x$ is said to be \textit{regularly a.p} under $(G,X)$, denoted $x\in P_{r.a.p}(X)$, if $N_G(x,U)$ contains a syndetic normal subgroup of $G$ for every $U\in\mathfrak{N}_x$; if every point of $X$ is regularly a.p under $(G,X)$, i.e., $P_{r.a.p}(X)=X$, then $(G,X)$ is called \textit{pointwise regularly a.p}.

\item We say $(G,X)$ is (uniformly) \textit{regularly a.p} if every $\alpha\in\mathscr{U}$ there is a syndetic normal subgroup $A$ of $G$ such that $Ax\subseteq\alpha[x]$ for all $x\in X$.

\item We call $(G,X)$ a \textit{point-regularly a.p flow} if there is a regularly a.p point that has a dense orbit in $X$.
\end{enumerate}
\end{enumerate}

See \cite[Theorem~12.55]{GH} for an example of point-regularly a.p $\mathbb{Z}$-flows that is not equicontinuous. Note that even for $X$ a compact metric space, a pointwise regularly a.p (in fact, pointwise periodic) $\mathbb{Z}$-flow need not be regularly a.p (see Example~\ref{R3.3.1}).

\begin{d)}[{cf.~\cite[Lem.~5.04]{GH}}]
Let $T$ be a discrete group and let $A$, $B_1,\dotsc,B_n$ be syndetic subgroups of $T$. Then:
\begin{enumerate}[(1)]
\item $\bigcap_{i=1}^n B_i$ is a syndetic subgroup of $T$.
\item There exists a syndetic normal subgroup $H$ of $T$ such that $H\leq A$. In fact, we can take $H=N(A)=\bigcap_{t\in T}t^{-1}At$.
\end{enumerate}
\end{d)}

\begin{proof}
(1). As $T/B_1, \dotsc, T/B_n$ are finite, it follows from $t(B_1\cap\dotsm\cap B_n)=tB_1\cap\dotsm\cap tB_n\ \forall t\in T$ that $T/\left(\bigcap_{i=1}^nB_i\right)$ is finite, whence $\bigcap_{i=1}^n B_i$ is a syndetic subgroup of $T$.

(2). Choose a finite set $K$ in $T$ such that $KA=T$. Then $N(A)=\bigcap_{t\in K}tAt^{-1}$, which is normal and is syndetic by (1). The proof is complete.
\end{proof}

Comparing with \cite{GH} where Gottschalk and Hedlund is by using the permutation group of $T/A$ for proving (2), the point of our alternative proof is to choose $H=N(A)$ here.

\begin{e)}[{cf.~\cite[Prop.~2.8]{E69} and \cite[Thm.~1.13]{A88} for $G$ a topological group instead of $G$ a para-topological group}]
Let $(G,X)$ be a flow and $A$ a syndetic normal subgroup of $G$. Then $(G,X)$ is pointwise a.p iff $(A,X)$ is pointwise a.p.
\end{e)}

\begin{proof}
Sufficiency is obvious (without using the assumption that $A$ is normal). Conversely, suppose $(G,X)$ is pointwise a.p and let $x\in X$. Let $y\in\overline{Ax}$ be an a.p point under $(A,X)$. Take $t_n\in G$ with $t_ny\to x$. Let $K$ be a compact subset of $G$ with $G=K^{-1}A$. Then there are $k_n\in K$ with $k_nt_n\in A$ and $k_n\to k\in K$. So $k_nt_ny\to kx\in\overline{Ay}$. Thus $kx$ is a.p under $(A,X)$. Since $A$ is normal, it is not difficult to prove that $x$ is also a.p for $(A,X)$. The proof is complete.
\end{proof}

\begin{cor*}
Let $(G,X)$ be a flow with $G$ a discrete group and $A$ a syndetic subgroup of $G$. Then $(G,X)$ is pointwise a.p iff $(A,X)$ is pointwise a.p.
\end{cor*}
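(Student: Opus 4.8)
The plan is to reduce the present non-normal situation to the syndetic-normal case already settled in Lemma~\ref{R1.5}e, by interposing a common syndetic normal subgroup between $A$ and $G$. Since $G$ is discrete and $A$ is syndetic in $G$, part~(2) of Lemma~\ref{R1.5}d supplies a syndetic normal subgroup $H=N(A)=\bigcap_{t\in G}t^{-1}At$ of $G$ with $H\leq A\leq G$. First I would observe that $H$ and $A$ are themselves phase groups of flows on $X$: each is a discrete subgroup of $G$, and for every $x\in X$ one has $\overline{Hx}\subseteq\overline{Ax}\subseteq\overline{Gx}$, so that Lagrange stability (iii) in~\textcircled{c} is inherited, $\overline{Hx}$ and $\overline{Ax}$ being closed subsets of the compact set $\overline{Gx}$. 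Hence $(H,X)$ and $(A,X)$ are flows in the sense of \textcircled{a}--\textcircled{c}, and Lemma~\ref{R1.5}e is applicable to both.

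Next I would apply Lemma~\ref{R1.5}e twice along the chain $H\leq A\leq G$. Applied to the syndetic normal subgroup $H$ of $G$, the lemma gives that $(G,X)$ is pointwise a.p if and only if $(H,X)$ is pointwise a.p. Applied to $H$ regarded as a subgroup of $A$, it gives that $(A,X)$ is pointwise a.p if and only if $(H,X)$ is pointwise a.p. Concatenating these two equivalences through the common flow $(H,X)$ yields that $(G,X)$ is pointwise a.p if and only if $(A,X)$ is pointwise a.p, which is the assertion. The only thing that must still be checked is that $H$ satisfies the hypotheses of Lemma~\ref{R1.5}e as a subgroup of $A$, namely that $H$ is both normal and syndetic in $A$.

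Normality is immediate, since $H$ is normal in $G$ and $A\leq G$, so that $aHa^{-1}=H$ for every $a\in A$. For syndeticity I would use that in a discrete group a syndetic subgroup is precisely a finite-index subgroup: $H$ syndetic in $G$ means $G=K^{-1}H$ for some finite $K$, whence $[G:H]<\infty$, and by multiplicativity of indices $[G:H]=[G:A]\,[A:H]$, so $[A:H]\leq[G:H]<\infty$ and $H$ has finite index, hence is syndetic, in $A$. This index bookkeeping is the only content beyond the two cited lemmas, and I expect no genuine obstacle: once the common normal refinement $H$ is produced by part~(2) of Lemma~\ref{R1.5}d, the corollary is a formal two-step application of Lemma~\ref{R1.5}e, the mildest care being needed only to confirm that Lagrange stability and syndeticity pass down to the subgroups $H$ and $A$.
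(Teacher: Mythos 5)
Your proposal is correct and follows essentially the same route as the paper: produce the syndetic normal subgroup $H=N(A)\leq A$ via Lemma~\ref{R1.5}d(2), then apply the inheritance Lemma~\ref{R1.5}e twice along $H\leq A\leq G$. Your explicit verification that $H$ is syndetic in $A$ (via finite index) and that Lagrange stability passes to subgroups is a welcome bit of bookkeeping that the paper leaves implicit.
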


\begin{proof}
By Lemma~\ref{R1.5}d,
there is a syndetic normal subgroup $H$ of $G$ such that $H\le A$. Then by Lemma~d) above and $H\triangleleft A$, $(G,X)$ is pointwise a.p iff $(H,X)$ is pointwise a.p iff $(A,X)$ is pointwise a.p. The proof is complete.
\end{proof}

\begin{note*}
If $(G,X)$ is minimal in addition, then $\mathscr{O}_A\colon X\rightarrow 2^X$ is continuous; see Theorem~\ref{R2.7}. However, there is no an analogous inheritance in general for the regular almost periodicity of non-discrete group actions.
\end{note*}
\end{se}

\begin{se}\label{R1.6}
We say that $(G,X)$ is \textit{distal} iff we have for all $x,y\in X$ with $x\not=y$ that $\Delta_X\cap\overline{G(x,y)}=\emptyset$. Clearly, distality is independent of the topology of $G$. Moreover, a distal flow is pointwise a.p.
\end{se}

\begin{se}\label{R1.7}
A Hausdorff space is said to be \textit{0-dimensional} if it has a base consisting of clopen sets. For example: 1. A locally compact, Hausdorff, and totally disconnected topological space is 0-dimensional (\cite[Theorem~3.5]{HR}); 2. Ellis' ``two-circle minimal set'' is compact Hausdorff 0-dimensional (\cite[Example~5.29]{E69}).

$G$ is called \textit{compactly generated} if there exists a compact subset $\Gamma$ of $G$ with $e\in \Gamma=\Gamma^{-1}$, called a \textit{generating set}, such that
$G=\bigcup_{r=1}^\infty\Gamma^r$. In this case, $G$ is $\sigma$-compact, not necessarily locally compact. Moreover, the generating set $\Gamma$ is not unique; e.g., $t^{-1}\Gamma t$, for $t\in G$, is so. In addition, $\Gamma$ need not be a neighborhood of $e$. For instance, $\Gamma=[-1,1]\times\{0\}\cup\{0\}\times[-1,1]$ is a compact generating set of $(\mathbb{R}^2,+)$ but it is not a neighborhood of $o=(0,0)\in\mathbb{R}^2$.
\end{se}

\begin{se}[inheritance]\label{R1.8}
If $G$ is compactly generated and $S$ a closed syndetic subgroup of $G$, is $S$ compactly generated too? Towards a positive solution we need the following lemma, which is a variation of \cite[Theorem~6.10]{GH}.

\begin{R1.8a}
Let $T$ be a group, let $S$ be a subgroup of $T$, $\Gamma\subseteq T$ a set with $e\in\Gamma=\Gamma^{-1}$ such that $T=\Gamma S$. Define $\Psi=\Gamma^3\cap S$. Then $\Gamma^n\cap S\subseteq\Psi^n$ for all $n\in\mathbb{Z}$.
\end{R1.8a}

\begin{proof}
Let $n$ be any positive integer, and, let $\gamma_1,\dotsc,\gamma_n\in \Gamma$ such that $s_n:=\gamma_n\dotsm \gamma_1\in S$; that is, $s_n\in \Gamma^n\cap S$.
By $T=\Gamma S=S\Gamma$, it follows that, for $1\le i<n$, there exists $s_i\in S$ with $s_i\in \Gamma\gamma_i\dotsm \gamma_1$. Clearly, $s_n\in \Gamma\gamma_n\dotsm \gamma_1$. Now $s_{i+1}s_i^{-1}\in \Gamma\gamma_{i+1}\Gamma\subseteq \Gamma^3$ and $s_{i+1}s_i^{-1}\in S$ for all $1\le i<n$. So $s_{i+1}s_i^{-1}\in \Psi$ and $s_{i+1}\in \Psi s_i$ for $1\le i<n$. Also $s_i\in \Gamma\gamma_1\subseteq \Gamma^3$ and $s_1\in S$, thus $s_1\in \Psi$. Then $s_n\in \Psi s_{n-1}\subseteq \Psi^2s_{n-2}\subseteq\dotsm\subseteq \Psi^{n-1}s_1\subseteq \Psi^n$. The proof is completed.
\end{proof}

\begin{R1.8b}
Let $G$ be a compactly generated topological group and $S$ a syndetic closed subgroup of $G$. Then $S$ is also compactly generated.
\end{R1.8b}

\begin{proof}
Let $G$ be compactly generated by $\Gamma$ with $e\in\Gamma=\Gamma^{-1}$. Since $S$ is syndetic, we can take a compact symmetric set $K\subset G$ with $G=KS$. Define $\Psi=(\Gamma\cup K)^3\cap S$, which is compact in $S$ such that $e\in\Psi=\Psi^{-1}$. By Lemma~\ref{R1.8}a, $S=\bigcup_{r\ge1}\Psi^r$. Thus $S$ is compactly generated by $\Psi$.
The proof is completed.
\end{proof}
\end{se}

\begin{se}[a note to para-topological group]\label{R1.9}
Let $(G,X)$ be a flow. Let $K\subset G$ be a compact set. Then $K^{-1}$ need to be compact subset of $G$. However, we can assert that $K^{-1}W$ is compact for every compact set $W$ in $X$ for $(G,X)$:

\begin{lem*}
If $K\subset G$ and $W\subset X$ are compact sets. Then $K^{-1}W$ is a compact subset of $X$.
\end{lem*}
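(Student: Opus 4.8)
The plan is to replace the missing continuity of inversion on $G$ by Lagrange stability, which forces orbit closures to be compact and thereby pins down the relevant limits. First I would record the reformulation that makes the flow structure usable: since each $t\in G$ acts as a bijection, $x=t^{-1}w$ is equivalent to $tx=w$, so
$$K^{-1}W=\{x\in X\,|\,tx\in W\text{ for some }t\in K\}=\{x\in X\,|\,Kx\cap W\neq\emptyset\}.$$
With this description, closedness of $K^{-1}W$ is immediate: if $x_i\to x$ with $x_i\in K^{-1}W$, pick $t_i\in K$ with $t_ix_i\in W$; by compactness of $K$ pass to a subnet with $t_i\to t\in K$; joint continuity of the action gives $t_ix_i\to tx$, and since $W$ is closed, $tx\in W$, i.e.\ $x\in K^{-1}W$.

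The key device is a \emph{separate} continuity statement for inversion against the action, proved using Lagrange stability: if $t_i\to t$ in $G$ and $q\in X$ is fixed, then $t_i^{-1}q\to t^{-1}q$. Indeed $t_i^{-1}q\in Gq\subseteq\overline{Gq}$, which is compact by (iii) in \textcircled{c}; hence $\{t_i^{-1}q\}$ has convergent subnets, and if $t_i^{-1}q\to\ell$ along one of them then $q=t_i(t_i^{-1}q)\to t\ell$ by joint continuity, so $\ell=t^{-1}q$. As every subnet limit equals $t^{-1}q$ and the net lies in a compact set, $t_i^{-1}q\to t^{-1}q$. This is exactly the substitute for ``$t_i^{-1}\to t^{-1}$'' that the para-topological setting denies us.

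With these in hand, relative compactness is what remains. Given a net $x_i=t_i^{-1}w_i$ in $K^{-1}W$, I would pass to a subnet with $t_i\to t\in K$ and $w_i\to w\in W$; joint continuity then shows that \emph{any} convergent subnet of $\{x_i\}$ must converge to $t^{-1}w$ (from $t_ix_i=w_i\to w$). Thus it suffices to trap $\{x_i\}$ in a fixed compact set. Here I would pass to the one-point compactification $X^{*}=X\cup\{\infty\}$, on which each $t_i^{-1}$ extends to a homeomorphism fixing $\infty$ and, by the key fact, $t_i^{-1}q\to t^{-1}q$ for all $q\in X$; a convergent subnet of $\{x_i\}$ now exists automatically in compact $X^{*}$, and the whole problem reduces to excluding the limit $\infty$, i.e.\ to ruling out that $x_i=t_i^{-1}w_i\to\infty$ while $t_i^{-1}w\to t^{-1}w\in X$.

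The hard part will be precisely this last exclusion, because it is where the discontinuity of inversion genuinely bites: the homeomorphisms $t_i^{-1}$ converge to $t^{-1}$ only pointwise, and pointwise convergence may ``tear apart'' the convergent net $w_i\to w$, sending $t_i^{-1}w_i$ to infinity while $t_i^{-1}w$ stays bounded. To close this I would combine local compactness of $X$ with Lagrange stability: fix a compact neighborhood $C$ of $p:=t^{-1}w$ with $p$ interior, note $t_i^{-1}w\in\operatorname{int}C$ eventually, and try to show $t_i^{-1}w_i\in C$ eventually, i.e.\ that the convergence $t_i^{-1}\to t^{-1}$ is uniform enough on a compact neighborhood of $w$ to survive moving the base point from $w$ to $w_i$. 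Establishing this uniformity---equivalently, that the action extends continuously to $(t,\infty)$ on $X^{*}$, or that $\{t_i^{-1}\}$ is suitably equicontinuous near $w$---is the single step carrying all the weight; every other step is formal. Once relative compactness is secured, combining it with the closedness proved at the outset yields compactness of $K^{-1}W$.
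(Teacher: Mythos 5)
Your setup is sound, and the two preparatory steps you actually carry out --- closedness of $K^{-1}W$ via nets, and the pointwise convergence $t_i^{-1}q\to t^{-1}q$ extracted from Lagrange stability --- are correct and coincide with the first half of the paper's argument. But the proposal stops exactly at the decisive point: you explicitly leave open the claim that $t_i^{-1}w_i$ eventually stays in a compact neighborhood of $t^{-1}w$ when $w_i\to w$, and nothing you have established implies it. The pointwise statement only controls $t_i^{-1}$ at a \emph{fixed} $q$; applied to the moving points $w_i$ it places $t_i^{-1}w_i$ in $\overline{Gw_i}$, and these orbit closures vary with $i$ and need not lie in any fixed compact set (the failure of upper semi-continuity of $x\mapsto\overline{Gx}$ is one of the main themes of this paper, so it cannot be waved away). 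Your fallback --- show $w_i\in t_iC$ eventually for a compact neighborhood $C$ of $t^{-1}w$ --- requires a single neighborhood of $w$ contained in $t_iC$ for all large $i$, which is precisely the equicontinuity of $\{t_i^{-1}\}$ near $w$ that you name but do not prove; and the passage to the one-point compactification $X^{*}$ merely restates the same question as continuity of the extended action at $(t,\infty)$. So as written this is an outline with the load-bearing step missing, not a proof.

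For comparison, the paper closes this gap by changing the ambient space: it regards $K$ inside $\textrm{Homeo}\,(X)$ with the topology of uniform convergence on compacta, where $\mathbb{K}=\{t_X\,|\,t\in K\}$ is compact because $t\mapsto t_X$ is continuous into that topology (a consequence of joint continuity of the action), and then upgrades the pointwise limit $(t_{nX})^{-1}x\to (t_X)^{-1}x$ --- the same computation you performed --- to continuity of $t\mapsto(t_X)^{-1}$ from $K$ into that same topology. Hence $\mathbb{K}^{-1}$ is compact in $\textrm{Homeo}\,(X)$ and $K^{-1}W=\mathbb{K}^{-1}W$ is the image of the compact set $\mathbb{K}^{-1}\times W$ under the continuous evaluation map. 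That upgrade from pointwise to uniform-on-compacta convergence is exactly the uniformity your argument lacks; to complete your route you would need to establish it (for instance via equicontinuity on compacta of $\{(t_X)^{-1}\,|\,t\in K\}$ together with Ascoli's theorem), at which point your exclusion of the limit $\infty$ becomes immediate.
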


\begin{proof}
We equip $\textrm{Homeo}\,(X)$ with the topology of uniform convergence on compacta. Then the mapping $G\rightarrow\textrm{Homeo}\,(X)$ defined by $t\mapsto t_X$, where $t_X\colon x\in X\mapsto tx\in X$, is continuous so $\mathbb{K}=\{t_X\,|\,t\in K\}$ is a compact subset of $\textrm{Homeo}\,(X)$. Let $t_n\to t$ in $K$ and $x\in X$. Since $\overline{Gx}$ is compact,
we may assume (a subnet of) $(t_{nX})^{-1}x\to x^*$ so that $x=t_{nX}(t_{nX})^{-1}x\to t_Xx^*$. Thus $x=t_Xx^*$ and $(t_X)^{-1}x=x^*$. This implies that $t_{nX}^{-1}\to (t_X)^{-1}$. Thus $K\rightarrow\textrm{Homeo}\,(X)$ defined by $t\mapsto (t_X)^{-1}$ is continuous. Since $t_X(t^{-1})_Xx=tt^{-1}x=x$ for all $t\in G$ and $x\in X$, so $(t_X)^{-1}=(t^{-1})_X$ for all $t\in G$. Then $\mathbb{K}^{-1}$ is compact in $\textrm{Homeo}\,(X)$. Therefore, $K^{-1}W=\mathbb{K}^{-1}W$ is compact in $X$. The proof is completed.
\end{proof}
\end{se}
%%%%%%%%%%%%%%%%%%%%%%%%%%%%
%%%%%%%%%%%%%%%%%%%%%%%%%%%%
\section{Pointwise recurrence in 0-dimensional flows}\label{secR2}%%%

This section will be devoted to proving the full statement of Theorem~A stated in $\S$\ref{sec0}, which implies that for some class of flows, the pointwise recurrence is equivalent to the pointwise almost periodicity.

\begin{se}\label{R2.1}
Let $G$ be compactly generated by $\Gamma$ such that $e\in\Gamma=\Gamma^{-1}$ as in Def.~\ref{R1.7}. For example, if $G$ is finitely generated or if $G$ is generative, then $G$ is compactly generated.

\begin{R2.1a}[word length]
For any $g\in G$ with $g\not=e$, the \textit{$\Gamma$-length of} $g$, denoted $|g|$, is the smallest integer $r$ such that $g=\gamma_1\dotsm\gamma_r$ with $\gamma_i\in\Gamma$ for $1\le i\le r$, and, write $\K(g)=\Gamma^{|g|-1}\cdot g$, where $\Gamma^0=\{e\}$ and $\K(g)$ is a compact subset of $G$. Clearly, $e\notin\K(g)$ for $g\in G\setminus\{e\}$ with $|g|\ge1$.
\end{R2.1a}

\begin{R2.1b}[cone]
A subset $\C$ of $G$ is called a \textit{$\Gamma$-cone} in $G$ if there exists a net $\{g_i\,|\,i\in\Lambda\}$ in $G$ such that $|g_i|\nearrow\infty$ and $\K(g_i)\rightharpoonup\C$ in $2^G$. Here $|g_i|\nearrow\infty$ means that if $i\le i^\prime$ in the ordering of $\Lambda$ then $|g_i|\le|g_{i^\prime}|$.
Clearly, every $\Gamma$-cone in $G$ is a closed set in $G$.

Moreover, if $G$ is discrete (i.e. finitely generated), then $e\notin\C$. However, this is not the case in non-discrete $G$. For instance, let $G=(\mathbb{R},+)$ with the usual euclidean topology and let $\Gamma=[-1,1]$; then for $g_i=i+1/i$, $i=1,2,\dotsc$, we have that $0\notin\K(g_i)\rightharpoonup\C=[0,\infty)$ with $0\in\C$.
\end{R2.1b}

\begin{R2.1c}[{cf.~\cite[Prop.~1.2 and Prop.~1.5]{AGW} for $G$ finitely generated}]
Let $G$ be compactly generated by $\Gamma$. Then:
\begin{enumerate}[$(1)$]
\item If $\{g_i\}$ is a net in $G$ with $|g_i|\nearrow\infty$, then $\K(g_i)\rightharpoonup\C\not=\emptyset$.
\item If $\C$ is a $\Gamma$-cone in $G$, then $\C$ is a discretely thick subset of $G$.
\end{enumerate}
\end{R2.1c}

\begin{proof}
Let $\{g_i\,|\,i\in\Lambda\}$ be a net in $G$ with $|g_i|\nearrow\infty$ and $\K(g_i)\rightharpoonup\C$. Let $K$ be a finite subset of $G$ with $e\in K$. We shall find an element $t\in G$ with $Kt\subseteq\C$. Since $G$ is generated by $\Gamma$, there exists an integer $r\ge1$ such that $K\subseteq\Gamma^r$. For each $i\in\Lambda$, let $\ell_i=|g_i|$ ($\nearrow\infty$) and write
$$
g_i=\gamma_1\dotsm\gamma_{\ell_i-r-1}\gamma_{\ell_i-r}\dotsm\gamma_{\ell_i}\in\Gamma^{\ell_i},\quad\alpha_i=\gamma_{\ell_i-r}\dotsm\gamma_{\ell_i}\in\Gamma^{r+1}.
$$
Then $K\alpha_i\subseteq\K(g_i)$ for $i\in\Lambda$ with $\ell_i>r$. Since $\Gamma^{r+1}$ is compact, we can assume (a subnet of) $\alpha_i\to t$ in $G$. Thus $t\in\C$ and $Kt\subseteq\C\not=\emptyset$ by Lemma~\ref{R1.1}. This also proves that $\C$ is a discretely thick subset of $G$. The proof is completed.
\end{proof}

This simple observation will be very useful. From the choice of $\alpha_i$ in its proof above we can easily conclude the following lemma:

\begin{R2.1d}[{cf.~\cite[Lem.~3.6]{R2}}]
Let $F$ be any finite subset of $G$. Then there is an integer $n>1$ such that for all $g\in G$ with $|g|\ge n$, there is an element $t\in G$ such that $|t|=n$ and $Ft\subseteq\K(g)$.
\end{R2.1d}

Note that if $r_i\to\infty$ and $|g_i|\to\infty$ with no any restriction on $|g_i|-r_i$, then it is possible that $\Gamma^{r_i}g_i\rightharpoonup\emptyset$ even though $G=\mathbb{Z}$ or $\mathbb{R}$. Moreover, the compactness of the generator set $\Gamma$ plays an important role in the above proof of Lemma~\ref{R2.1}c and also in Theorem~\ref{R2.6}a below.
\end{se}

\begin{se}[recurrence]\label{R2.2}%%%
Let $(G,X)$ be a flow with a compactly generated phase group $G$ that has a compact generating set $\Gamma$ and let $x\in X$. We will define recurrence at $x$ in two ways.
\begin{R2.2a}
We say that $x$ is \textit{$\Gamma$-recurrent of type I} under $(G,X)$ if $\C x\cap U\not=\emptyset$ for all $\Gamma$-cone $\C$ in $G$ and every $U\in\mathfrak{N}_x$. We say that $x$ is \textit{recurrent of type I} under $(G,X)$ if $x$ is $\Gamma$-recurrent of type I for all compact generating set $\Gamma$ of $G$. Clearly, if $x$ is recurrent of type I, then $tx$ is recurrent of type I for every $t\in G$.
\end{R2.2a}

\begin{R2.2b}
We say that $x$ is \textit{$\Gamma$-recurrent of type II} under $(G,X)$ if for each $U\in\mathfrak{N}_x$ and every net $\{g_i\}$ in $G$ with $|g_i|\to\infty$ there exists an integer $n\ge1$ such that there is a subnet $\{g_{i_j}\}$ from $\{g_i\}$ and $c_j\in\K(g_{i_j})$ with $c_jx\in U$ and $|c_j|\le n$. See \cite[Def.~1.1]{R2}.
\end{R2.2b}
\end{se}

\begin{lem}\label{R2.3}
Let $(G,X)$ be a flow, where $G$ is compactly generated by $\Gamma$, and, $x\in X$. Then:
\begin{enumerate}[$(1)$]
\item If $x$ is an a.p point, then $x$ is $\Gamma$-recurrent of type II (cf.~\cite[Prop.~3.7(iv)]{R2}).
\item If $x$ is $\Gamma$-recurrent of type II, then $x$ is $\Gamma$-recurrent of type I.
\end{enumerate}
\end{lem}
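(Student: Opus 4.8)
The plan is to treat the two implications separately, both resting on the word-length machinery of \S\ref{R2.1}.

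For (1), the engine will be Lemma~\ref{R2.1}d together with the finite-cover characterization of almost periodicity from \ref{R1.5}a: since $x$ is a.p, given $U\in\mathfrak{N}_x$ there is a finite set $K\subseteq G$ with $Gx\subseteq KU$. Fix a net $\{g_i\}$ with $|g_i|\to\infty$. First I would apply Lemma~\ref{R2.1}d to the finite set $F=K^{-1}$, obtaining an integer $n_0>1$ so that for every $i$ with $|g_i|\ge n_0$ there is $t_i\in G$ with $|t_i|=n_0$ and $K^{-1}t_i\subseteq\K(g_i)$. Next, feeding $t_i$ into $Gx\subseteq KU$ produces $k_i\in K$ with $k_i^{-1}t_ix\in U$; setting $c_i=k_i^{-1}t_i$ gives $c_i\in K^{-1}t_i\subseteq\K(g_i)$ and $c_ix\in U$. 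Finally, choosing $m$ with $K\subseteq\Gamma^m$ (so $K^{-1}\subseteq\Gamma^m$ as $\Gamma=\Gamma^{-1}$) bounds $|c_i|\le m+n_0$ uniformly, and the cofinal collection of $i$ with $|g_i|\ge n_0$ furnishes the required subnet with $n=m+n_0$. This is the delicate step: one has to align the direction of the inclusion (applying Lemma~\ref{R2.1}d to $K^{-1}$ rather than $K$) and keep the length bound independent of $i$.

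For (2), let $\C$ be a $\Gamma$-cone witnessed by a net $\{g_i\}$ with $|g_i|\nearrow\infty$ and $\K(g_i)\rightharpoonup\C$, and let $U\in\mathfrak{N}_x$. Because an element of $\K(g_{i_j})$ need not itself lie in $\C$---only a limit of such elements does---I would first shrink $U$ to a closed (indeed compact) neighborhood $W\in\mathfrak{N}_x$ with $W\subseteq U$, available since $X$ is locally compact Hausdorff. Applying type~II recurrence to $W$ and the net $\{g_i\}$ yields an integer $n$, a subnet $\{g_{i_j}\}$, and $c_j\in\K(g_{i_j})$ with $c_jx\in W$ and $|c_j|\le n$. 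Since $c_j\in\Gamma^n$ and $\Gamma^n$ is compact, I would pass to a subnet with $c_j\to c\in\Gamma^n$. By the definition of $\limsup$ in \ref{R1.1} we get $c\in\C$, while joint continuity of the action gives $c_jx\to cx$, so $cx\in\overline{W}=W\subseteq U$. Hence $cx\in\C x\cap U$, proving type~I recurrence.

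The main obstacle is the bookkeeping in part~(1): extracting from almost periodicity a return element whose $\Gamma$-length stays uniformly bounded while simultaneously landing inside the prescribed $\K(g_i)$, which is precisely what the interplay between $Gx\subseteq KU$ and Lemma~\ref{R2.1}d is designed to deliver. Part~(2) is comparatively soft, its only real subtlety being the replacement of $U$ by a closed neighborhood so that the limit point $cx$ is captured.
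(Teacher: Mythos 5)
Your proof is correct and takes essentially the same route as the paper's. In part (1) your finite set $K^{-1}$ plays exactly the role of the paper's $F$ with $F^{-1}N_G(x,U)=G$ (the two characterizations in \ref{R1.5}a being interchangeable), and part (2) coincides with the paper's argument, which likewise shrinks $U$ to a $V\in\mathfrak{N}_x$ with $\overline{V}\subseteq U$ and uses compactness of $\Gamma^n$ to extract the limit $c\in\C$.
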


\begin{proof}
(1). Let $U\in\mathfrak{N}_x$ and simply write $A=N_G(x,U)$. Then $A$ is a discretely syndetic subset of $G$ by \ref{R1.5}a. So there is a finite set $F$ in $G$ such that $F^{-1}A=G$, or equivalently, $Ft\cap A\not=\emptyset$ for all $t\in G$. Let $n_1=\max\{|f|+1\,|\,f\in F\}$. Let $\{g_i\}$ be a net in $G$ with $|g_i|\to\infty$.
By Lemma~\ref{R2.1}d, there exists an integer $n>n_1$ such that there is a subnet $\{g_{i_j}\}$ from $\{g_i\}$ such that for each $j$, there is an element $t_j\in G$ with $Ft_j\subseteq\K(g_{i_j})$ and $|t_j|=n$. Take $f_j\in F$ with $c_j=f_jt_j\in A$, then $|c_j|\le2n$ and $c_jx\in U$ for all $j$. Thus $x$ is $\Gamma$-recurrent of type II.

(2). Suppose $x$ is $\Gamma$-recurrent of type II. Let $\C$ be a $\Gamma$-cone in $G$. That is, $\K(g_i)\rightharpoonup\C$, where $\{g_i\}$ is a net in $G$ with $|g_i|\nearrow\infty$. Let $U, V\in\mathfrak{N}_x$ with $\overline{V}\subseteq U$. By Def.~\ref{R2.2}b, there is an integer $n\ge1$ and a subnet $\{g_{i_j}\}$ from $\{g_i\}$ and $c_j\in\K(g_{i_j})$ such that $c_jx\in V$ and $|c_j|\le n$. Since $\Gamma^n$ is compact and $c_j\in\Gamma^n$, we can assume (a subnet of) $c_j\to c$ in $G$ and $c_jx\to cx\in\overline{V}$. Since $c\in\C$ by definition and $\C x\cap U\not=\emptyset$, $x$ is $\Gamma$-recurrent of type I.
The proof is complete.
\end{proof}

\begin{lem}[{\cite[Thm.~4.16, 4.17, 4.24]{GH}}]\label{R2.4}
A flow $(G,X)$ is locally weakly a.p with $G$ under the discrete topology if and only if $\mathscr{O}_G\colon X\rightarrow2^X$ is upper semi-continuous.
\end{lem}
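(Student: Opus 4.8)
The plan is to prove the two implications separately, in each case exploiting the fact that both hypotheses force \emph{pointwise} almost periodicity, so that every orbit closure is a compact \emph{minimal} set; this minimality is the lever that turns the one-sided information we are handed into the two-sided conclusions we want. Throughout $G$ carries the discrete topology, so that ``compact subset of $G$'' means ``finite subset'', and I would freely use the reformulation of local weak almost periodicity recorded in \ref{R1.5}b: for each $x$ and each $U\in\mathfrak{N}_x$ there are a finite $F\subseteq G$ and a $V\in\mathfrak{N}_x$ with $GV\subseteq F^{-1}U$.

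For the implication $\mathscr{O}_G$ upper semi-continuous $\Rightarrow$ locally weakly a.p, I would first observe that upper semi-continuity gives $\mathscr{O}_G$ $\ulcorner$continuous$\urcorner$ (the Lemma in \ref{R1.2}), hence $R_o(X)$ closed (\ref{R1.3}b), hence symmetric (\ref{R1.3}a), so by \ref{R1.4} every $\overline{Gx}$ is minimal and every point is a.p. Now fix $x$ and $U\in\mathfrak{N}_x$. Using regularity of $X$, choose $U_1,U_0\in\mathfrak{N}_x$ with $\overline{U_1}\subseteq U_0$ and $\overline{U_0}\subseteq U$; by almost periodicity (\ref{R1.5}a) there is a finite $F_0\subseteq G$ with $Gx\subseteq F_0U_1$, whence $\overline{Gx}\subseteq F_0\overline{U_1}\subseteq F_0U_0=:W$, an open neighbourhood of $\overline{Gx}$. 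Feeding $W$ into upper semi-continuity yields $V\in\mathfrak{N}_x$ with $\overline{Gy}\subseteq W=F_0U_0$ for all $y\in V$, so that $GV\subseteq F_0U=F^{-1}U$ with $F:=F_0^{-1}$ finite. This is precisely local weak almost periodicity at $x$, and this direction is routine.

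For the converse, local weak a.p $\Rightarrow$ upper semi-continuous, I would again start from the fact that local weak almost periodicity already forces pointwise almost periodicity (\ref{R1.5}b), so every $\overline{Gx}$ is minimal. Arguing by contradiction, suppose $\mathscr{O}_G$ is not upper semi-continuous at $x$: there are an open $W\supseteq\overline{Gx}$, a net $y_i\to x$, and points $z_i\in\overline{Gy_i}\setminus W$. The decisive non-compact manoeuvre is to trap the wandering orbit closures inside a single compact set. Choosing $U\in\mathfrak{N}_x$ with $\overline{U}$ compact (local compactness) and applying local weak a.p in the form $GV\subseteq F^{-1}U$, one gets, for $y_i$ eventually in $V$, that $\overline{Gy_i}\subseteq F^{-1}\overline{U}=:C$; and $C$ is compact precisely because $F$ is finite and each $f^{-1}\overline{U}$ is compact. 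Hence $z_i\in C\setminus W$ lies in a compact set, and a subnet satisfies $z_i\to z$ with $z\notin\overline{Gx}$.

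It remains to contradict this by proving $z\in\overline{Gx}$, and this is the step I expect to be the main obstacle, since $X$ is noncompact and $R_o(X)$ is \emph{not} yet known to be closed (indeed, by the Warning, $\ulcorner$continuity$\urcorner$ alone would not suffice in the noncompact case). The idea is to prove instead that $x\in\overline{Gz}$ and then upgrade by minimality. By minimality, $\overline{Gy_i}=\overline{Gz_i}$, so $y_i\in\overline{Gz_i}$ and there is $s_i\in G$ with $s_iz_i$ close to $y_i$. Given any target neighbourhood $O$ of $x$, I would pick a symmetric closed $\beta\in\mathscr{U}$ with $\overline{\beta^3[x]}\subseteq O$, invoke local weak a.p in the form $Fty\cap\alpha[y]\neq\emptyset$ with $\alpha=\beta$ to obtain a finite $F$ and a $V\in\mathfrak{N}_x$, and then choose an auxiliary index $\gamma\le\beta$ fine enough that the corrected points $w_i:=s_iz_i\in\gamma[y_i]\subseteq\gamma^2[x]$ land in $V$ while staying in a controlled neighbourhood of $x$. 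Applying the recurrence condition at $w_i$ with $t=s_i^{-1}$ gives (after passing to a subnet on which the finite index $f_i\in F$ is constant, equal to $f$) that $fz_i\in\beta[w_i]\subseteq\beta^3[x]$; letting $i\to\infty$ and using $z_i\to z$, continuity of the action and closedness of $\beta$ yields $fz\in\overline{\beta^3[x]}\subseteq O$. As $O$ was arbitrary, $x\in\overline{Gz}$; minimality of $\overline{Gz}$ then forces $\overline{Gx}=\overline{Gz}\ni z$, contradicting $z\notin\overline{Gx}$. The two genuinely delicate points here are the order of the choices $\beta\rightsquigarrow V\rightsquigarrow\gamma$ and the extraction of the constant corrector $f$; both hinge on the finiteness of $F$, i.e.\ on the discreteness of $G$, which is also exactly what keeps $C$ compact in the trapping step.
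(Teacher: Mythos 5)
Your proof is correct. The direction ``upper semi-continuous $\Rightarrow$ locally weakly a.p'' is essentially the paper's own argument: the paper covers the compact minimal set $\overline{Gx}$ by finitely many sets $W_i$ with $t_iW_i\subseteq U$ and feeds $\bigcup_iW_i$ into upper semi-continuity, while you cover it by $F_0U_0$ using \ref{R1.5}a and feed that in; the two are minor variants corresponding to the two equivalent formulations in \ref{R1.5}b. The converse is where you genuinely diverge. The paper argues directly: it first shows $\overline{GN}$ is compact for a relatively compact open neighborhood $N$ of $\overline{Gx}$ (the same trapping device you use, resting on finiteness of the sets $F$), then replaces $X$ by $\overline{GN}$ so that the phase space is compact, and finally covers the compact set $X-N$ by finitely many sets $V_z$ with $GV_z\subseteq F_z^{-1}(X-W)$ to obtain a finite $K$ with $G(X-N)\subseteq K(X-W)$; shrinking $V$ so that $K^{-1}V\subseteq W$ then yields $GV\subseteq N$ in one stroke. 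You instead argue by contradiction, extract a limit $z\in C\setminus W$ of the escaping points $z_i$, prove $x\in\overline{Gz}$ by a pointwise return argument (correctors $s_i$ with $s_iz_i$ near $y_i$, a constant $f\in F$ on a subnet), and upgrade to $z\in\overline{Gx}$ by minimality of $\overline{Gz}$. Both proofs turn on the same two pivots---finiteness of $F$ (discreteness of $G$) and minimality of all orbit closures---but the paper's covering of the complement is shorter and avoids the entourage bookkeeping ($\beta\rightsquigarrow V\rightsquigarrow\gamma$) and subnet extractions your route requires; your route, in exchange, stays local at the single bad point $z$ and makes explicit why one need not know in advance that $R_o(X)$ is closed.
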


\begin{proof}
Sufficiency. Assume $\mathscr{O}_G$ is upper semi-continuous. By \ref{R1.3}b and Lemma~\ref{R1.4}a, every orbit-closure is minimal so $(G,X)$ is pointwise a.p. Let $\alpha\in\mathscr{U}$ and $x\in X$. Let $U\in\mathfrak{N}_x$ be so small that $\alpha[y]\supseteq U$ for all $y\in U$. Since $\overline{Gx}$ is compact minimal, there exists a finite set $\{z_1,\dotsc,z_k\}\subset Gx$ with $W_i\in\mathfrak{N}_{z_i}$, $t_i\in G$ such that $t_iW_i\subseteq U$ and $\overline{Gx}\subseteq\bigcup_{i=1}^kW_i$. There is a $V\in\mathfrak{N}_x$ with $V\subseteq U$ such that $\overline{Gy}\subseteq\bigcup_{i=1}^kW_i$ for all $y\in V$. Then for $F=\{t_1,\dotsc,t_k\}$, $Fty\cap\alpha[y]\not=\emptyset$ for all $y\in V$ and $t\in G$. Thus $(G,X)$ is locally weakly a.p with $G$ under the discrete topology.

Necessity. Suppose $(G,X)$ is locally weakly a.p with $G$ under the discrete topology and let $x\in X$ and $N$ an open neighborhood of $\overline{Gx}$ such that $\overline{N}$ is compact. It is not difficult to show that $\overline{GN}$ is compact.

Indeed, for each $y\in\overline{N}$ and each compact $U_y\in\mathfrak{N}_y$, there is a finite set $K_y$ in $G$ and an open set $V_y\in\mathfrak{N}_y$ such that $GV_y\subseteq K_y^{-1}U_y$. Since $\overline{N}$ is compact, there is a finite set $\{y_1,\dotsc,y_n\}\subset\overline{N}$ with $\overline{N}\subseteq V_{y_1}\cup\dotsm\cup V_{y_n}$ so that $GN\subseteq\bigcup_{i=1}^nK_{y_i}^{-1}U_{y_i}$. Thus $\overline{GN}$ is compact.

Now replacing $X$ by $\overline{GN}$ if necessary, we may assume $X$ is compact. It is enough to find a $V\in\mathfrak{N}_x$ such that $GV\subseteq N$. For this, choose a closed neighborhood $W$ of $\overline{Gx}$ with $W\subset N$. For $X-W$ is open and $X-N$ is compact and $X-N\subset X-W$, there exists a finite set $K\subset G$ such that $G(X-N)\subseteq K(X-W)$. Take $V\in\mathfrak{N}_x$ so small that $K^{-1}V\subseteq W$. Then $K^{-1}V\cap(X-W)=\emptyset$, $V\cap K(X-W)=\emptyset$, and $V\cap G(X-N)=\emptyset$. So $GV\cap(X-N)=\emptyset$ and $GV\subseteq N$. The proof is complete.
\end{proof}

It should be mentioned that the Lagrange stability plays an important role for the sufficiency of Lemma~\ref{R2.4}. The main ideas of the proof of Theorem~A are contained in Lemma~\ref{R2.4} above and Lemma~\ref{R2.5} below.

\begin{lem}\label{R2.5}
Let $(G,X)$ be any flow with $G$ compactly generated by $\Gamma$, where $\overline{Gy}$ need not be compact for $y\in X$.
Let $x\in X$ and suppose there is an open neighborhood $U$ of $\overline{Gx}$ with $\overline{U}$ compact, such that $x\notin\texttt{int}\,U_G^\infty$.
Then there are $g_i^{}\in G$ with $|g_i^{}|\to\infty$ and $x_i\in X$ with $x_i\to x$, such that $g_i^{}x_i\to y\in X\setminus U$ and $\K(g_i^{-1})\rightharpoonup\C$ with $\C y\subseteq\overline{U}$ and ${\left(\overline{\C y}\right)}_G^\infty\not=\emptyset$.
\end{lem}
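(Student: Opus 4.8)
The plan is to exploit the failure of $x$ to be interior to $U_G^\infty$ by producing points near $x$ whose orbits escape $U$, and to measure the escape by $\Gamma$-word length. First I would use $x\notin\texttt{int}\,U_G^\infty$ to choose a net $x_i\to x$ with $x_i\in U$ and $Gx_i\nsubseteq U$; for each $i$ set $r_i=\min\{|g| : g\in G,\ gx_i\notin U\}$, a well-defined integer $\ge1$ (it is $\ge1$ because $x_i\in U$, and finite because some $gx_i\notin U$), and choose $g_i$ with $|g_i|=r_i$ and $g_ix_i\notin U$. The first key point is that $r_i\to\infty$: otherwise a subnet keeps $g_i$ in a fixed compact $\Gamma^M$, so (a subnet of) $g_i\to g^\ast$ and $g_ix_i\to g^\ast x$ by joint continuity of the action; but $g^\ast x\in Gx\subseteq U$ while $g_ix_i\in X\setminus U$ (closed) forces $g^\ast x\notin U$, a contradiction.

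Next I would extract the limit $y$. Writing $g_i=\gamma_1^{(i)}h_i$ with $\gamma_1^{(i)}\in\Gamma$ and $|h_i|\le r_i-1$, minimality of $r_i$ gives $h_ix_i\in U\subseteq\overline{U}$; since $\overline{U}$ and $\Gamma$ are compact, I pass to a subnet along which $h_ix_i\to w\in\overline{U}$ and $\gamma_1^{(i)}\to\gamma^\ast\in\Gamma$, so $g_ix_i=\gamma_1^{(i)}(h_ix_i)\to\gamma^\ast w=:y$, and $y\in X\setminus U$ since $X\setminus U$ is closed. Passing to a further subnet I may assume $|g_i^{-1}|=|g_i|=r_i\nearrow\infty$ (using $\Gamma=\Gamma^{-1}$), so that $\C:=\limsup_i\K(g_i^{-1})$ is a $\Gamma$-cone; by Lemma~\ref{R2.1}c it is nonempty and discretely thick. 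To see $\C y\subseteq\overline{U}$, take $c\in\C$ and, by~\ref{R1.1}, a subnet with $c_k\in\K(g_{i_k}^{-1})$, $c_k\to c$; writing $c_k=\beta_kg_{i_k}^{-1}$ with $\beta_k\in\Gamma^{r_{i_k}-1}$ gives $c_k(g_{i_k}x_{i_k})=\beta_kx_{i_k}\in U$ (because $|\beta_k|<r_{i_k}$), and joint continuity yields $cy=\lim_k\beta_kx_{i_k}\in\overline{U}$.

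Finally I would produce a point of $(\overline{\C y})_G^\infty$. Here $\overline{\C y}\subseteq\overline{U}$ is compact and $(\overline{\C y})_G^\infty=\bigcap_{g\in G}g\overline{\C y}=\{p : Gp\subseteq\overline{\C y}\}$. Using discrete thickness of $\C$, for each finite $F\subseteq G$ with $e\in F$ I choose $t_F\in\C$ with $Ft_F\subseteq\C$, and set $p_F=t_Fy$; then $Fp_F=(Ft_F)y\subseteq\C y\subseteq\overline{\C y}$ and $p_F\in\overline{\C y}$. Since $\overline{\C y}$ is compact, the net $\{p_F\}$, directed by inclusion of the sets $F$, has a subnet converging to some $p\in\overline{\C y}$; for each fixed $g\in G$ one has $gp_F\in\overline{\C y}$ as soon as $g\in F$, whence $gp\in\overline{\C y}$ by continuity of $z\mapsto gz$, so $Gp\subseteq\overline{\C y}$ and $p\in(\overline{\C y})_G^\infty\ne\emptyset$.

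I expect the main obstacle to be the absence of Lagrange stability: $g_ix_i$ lands in the non-compact set $X\setminus U$, so the convergent value $y$ cannot be taken for granted but must be manufactured from the factorization $g_ix_i=\gamma_1^{(i)}(h_ix_i)$, where the minimal-length choice confines $h_ix_i$ to the compact set $\overline{U}$. Likewise the nonemptiness of $(\overline{\C y})_G^\infty$ must be squeezed out of discrete thickness together with compactness of $\overline{\C y}$, rather than from any invariant-set or recurrence hypothesis.
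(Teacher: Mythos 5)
Your proof is correct and takes essentially the same route as the paper's: minimal-length escape times $r_i=|g_i|$, splitting off the first generator so that $h_ix_i$ stays in the compact set $\overline{U}$, establishing $\C y\subseteq\overline{U}$ via $c_k(g_{i_k}x_{i_k})=\beta_kx_{i_k}\in U$ with $|\beta_k|<r_{i_k}$, and then discrete thickness of $\C$ together with compactness of $\overline{\C y}$ to produce a full $G$-orbit inside $\overline{\C y}$. The only difference is that you spell out details the paper leaves implicit (why $r_i\to\infty$, the passage to a monotone subnet, and $|g_i^{-1}|=|g_i|$).
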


\begin{proof}
Since $x\notin\texttt{int}\,U_G^\infty$, there is a net $x_i\in U\setminus U_G^\infty$ with $x_i\to x$. We can obviously take $g_i^{}\in G$ such that
$g_i^{}x_i\notin U$ and $|g_i^{}|=\min\{|t|\colon t\in G, tx_i\notin U\}$.
Obviously (a subnet of) $|g_i^{}|\to\infty$. For all $i$ we write
$g_i^{}=\gamma_{i,1}\gamma_{i,2}\dotsm\gamma_{i,|g_i^{}|}$ and $g_i^\prime=\gamma_{i,2}\dotsm\gamma_{i,|g_i^{}|}$.
Then $g_i^\prime x_i\in U$ and $\gamma_{i,1}\in\Gamma$ for all $i$. Since $\overline{U}$ and $\Gamma$ are compact, we can assume (a subnet of)
$\gamma_{i,1}\to\gamma$ and $g_i^\prime x_i\to y^\prime\in U$. Let $y=\gamma y^\prime$; then $g_i^{} x_i\to y\notin U$.

Let $\K(g_i^{-1})\rightharpoonup\C$, a $\Gamma$-cone in $G$. Given $c\in \C$, we can take a subnet $\{g_{i_j}^{}\}$ from $\{g_i^{}\}$ and $c_j=k_jg_{i_j}^{-1}$ with $k_j\in\Gamma^{|g_{i_j}^{}|-1}$ and $c_j\to c$. Thus by $|k_j|<|g_{i_j}^{}|$, it follows that
$$
cy={\lim}_jc_jy={\lim}_j (k_jg_{i_j}^{-1})(g_{i_j}^{}x_{i_j})\in \overline{U}.
$$
So $\overline{\C y}\subseteq \overline{U}$. Finally, to show $\overline{\C y}$ contains a $G$-orbit, let $\mathcal {F}$ be the collection of finite subsets of $G$ with a partial order by inclusion ($F\le F^\prime$ if $F\subseteq F^\prime$). Then $\mathcal {F}$ is a directed system. Since $\C$ is discretely thick in $G$ by Lemma~\ref{R2.1}c, we have for $F\in\mathcal {F}$ that there exists an element $t_F\in\C$ with $Ft_F\subseteq\C$. Since $\overline{U}$ is compact and $t_Fy\in \overline{U}$, we can assume (a subnet of) $t_Fy\to y^\prime\in\overline{U}$. Then for any fixed $t\in G$ there is some $F_0\in\mathcal {F}$ with $t\in F_0$. Then $t\in F$ for all $F\ge F_0$ so that
$t\in F$ eventually and $tt_Fy\in\C y$ eventually. Hence $ty^\prime\in\overline{\C y}$ for all $t\in G$.
Then $\overline{Gy^\prime}\subseteq\overline{\C y}$.
The proof is completed.
\end{proof}

\begin{se}\label{R2.6}
We are ready to give concise and self-contained proofs of our main theorems.
Recall that $\mathscr{O}_G\colon X\rightarrow 2^X$ is defined by $x\mapsto\overline{Gx}$, and, $U_G^*=\{x\in X\,|\,\overline{Gx}\subseteq U\}$ is closed invariant for all subset $U$ of $X$.

\begin{R2.6a}
Let $(G,X)$ be any flow, where $G$ is compactly generated by $\Gamma$ and $\dim X=0$. Then the following conditions are pairwise equivalent:
\begin{enumerate}[(1)]
\item $(G,X)$ is pointwise $\Gamma$-recurrent of type I.
\item $(G,X)$ is pointwise $\Gamma$-recurrent of type II.
\item $(G,X)$ is pointwise a.p.
\item $X$ is a union of $G$-minimal sets.
\item $R_o(X)$ is closed.
\item $\mathscr{O}_G$ is $\ulcorner\!\textrm{continuous}\!\urcorner$ on $X$.
\item $\mathscr{O}_G$ is upper semi-continuous on $X$.
\item Given any compact-open subset $U$ of $X$, $U_G^*$ is compact open.
\item $(G,X)$ is locally weakly a.p with $G$ under the discrete topology.
\end{enumerate}
\end{R2.6a}

\begin{notes*}
\begin{enumerate}[1.]
\item The above (3) and (5) implies that the quotient mapping $\rho\colon X\rightarrow X/G$ is open and closed by Lemma~\ref{R1.4}b.
\item If in addition $(G,X)$ is topologically transitive (i.e. $\overline{GU}=X$ for every non-empty open set $U$ in $X$), then each of conditions (1)\,$\thicksim$\,(9) implies that $(G,X)$ is minimal. For (3) implies that $X/G$ is a singleton.
\end{enumerate}
\end{notes*}

\begin{proof}
(2)$\Rightarrow$(1): By (2) of Lemma~\ref{R2.3}.

(3)$\Rightarrow$(2): By (1) of Lemma~\ref{R2.3}.

(4)$\Rightarrow$(3): By \ref{R1.5}a.

(5)$\Rightarrow$(4): By Lemma~\ref{R1.4}a.

(6)$\Rightarrow$(5): By \ref{R1.3}b.

(7)$\Rightarrow$(6): By Def.~\ref{R1.2}.

(8)$\Rightarrow$(7): Assume (8). Let $x\in X$ and $V$ an open neighborhood of $\overline{Gx}$. Because $X$ is locally compact 0-dimensional and $\overline{Gx}$ is compact, there is a compact-open set $U$ with $\overline{Gx}\subseteq U\subseteq V$. Then $x\in\overline{Gx}\subseteq U_G^*\subseteq V$. Since $U_G^*$ is open, $\mathscr{O}_G$ is upper semi-continuous. Thus (8)$\Rightarrow$(7).

(1)$\Rightarrow$(8): Assume (1). Let $U$ be a compact-open subset of $X$. Then $U_G^*=U_G^\infty$ is a $G$-invariant compact set in $X$. To prove that $U_G^*$ is open, suppose the contrary that $U_G^*$ is not open in $X$. Then $U_G^*\not=\emptyset$ and there is a point $x\in U_G^\infty\setminus\texttt{int}\,U_G^\infty$. Then by Lemma~\ref{R2.5}, it follows that there exists a point $y\in X\setminus U$ and a $\Gamma$-cone $\C$ such that $\C y\cap(X\setminus U)=\emptyset$, contrary to that
 $X\setminus U\in\mathfrak{N}_y$ and that $y$ is $\Gamma$-recurrent of type I by condition (1). Thus (1) implies (8).

(7)$\Leftrightarrow$(9): By Lemma~\ref{R2.4}.

The proof of Theorem~\ref{R2.6}a is thus completed.
\end{proof}

The idea of (1)$\Rightarrow$(8) of using the $\Gamma$-lengths of elements in $G$ could date back to \cite{SS}. It is also useful for the following theorem, where $V_G^\infty=\bigcap_{g\in G}gV$ as in \ref{R1.2}c and in particular there is no condition $\dim X=0$.

\begin{R2.6b}
Let $(G,X)$ be any flow with $G$ compactly generated by $\Gamma$, where $\overline{Gx}$ need not be compact for $x\in X$.
Let $V$ be an open subset of $X$ such that $V_G^\infty$ is compact. Then
$V_G^\infty$ is open if and only if $\overline{Gx}\cap V_G^\infty=\emptyset$ for every $x\in X\setminus V_G^\infty$.
\end{R2.6b}

\begin{proof}
Necessity is obvious, for $X\setminus V_G^\infty$ is a closed $G$-invariant subset of $X$. To show sufficiency, suppose the contrary that $V_G^\infty$ is not open (so $V_G^\infty\not=\emptyset$). Since $X$ is a locally compact Hausdorff space and $V_G^\infty$ is compact, we can find an open neighborhood $U$ of $V_G^\infty$ with $\overline{U}\subseteq V$ such that $\overline{U}$ is compact. Clearly, $V_G^\infty=\bigcap_{g\in G}gU=U_G^\infty$. Then $U_G^\infty\setminus\texttt{int}\,U_G^\infty\not=\emptyset$. Further by Lemma~\ref{R2.5}, it follows that there exists some point $y\in X\setminus V_G^\infty$ and some $\Gamma$-cone $\C$ in $G$ such that
$\C y\subseteq\overline{U}\subseteq V$ so that $\overline{Gy}\cap V_G^\infty\not=\emptyset$, contrary to the sufficiency condition.
The proof is completed.
\end{proof}

\begin{R2.6c}
Let $(G,X)$ be a flow with $G$ compactly generated. Then $\mathscr{O}_G$ is $\ulcorner\!\textrm{continuous}\!\urcorner$ if and only if $\mathscr{O}_G$ is upper semi-continuous.
\end{R2.6c}

\begin{proof}
Sufficiency is obvious by Def.~\ref{R1.2}a and \ref{R1.2}b. Necessity follows easily from Lemma~\ref{R2.5}. We omit the details here.
\end{proof}
\end{se}

\begin{thm}[{cf.~\cite[Thm.~2.32, Thm.~4.29]{GH} for $G$ a topological group}]\label{R2.7}
Let $(G,X)$ be a minimal flow and $H$ a syndetic normal subgroup of $G$. Then $\mathscr{O}_H\colon X\rightarrow 2^X$ is continuous; that is, $(H,X)$ is a weakly a.p flow with $H$ under the discrete topology.
\end{thm}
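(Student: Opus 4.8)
The plan is to collapse the entire assertion to the single claim that $\mathscr{O}_H$ is continuous, and then to obtain this continuity from the observation that, once the compact ``index'' furnished by syndeticity is accounted for, there is essentially only \emph{one} $H$-minimal set: all the others are its $G$-translates. First I would record the standing reductions. Since $(G,X)$ is minimal it is compact (\ref{R1.4}) and pointwise a.p; as $H$ is a syndetic normal subgroup, Lemma~\ref{R1.5}e shows that $(H,X)$ is pointwise a.p, so every $\overline{Hx}$ is a compact $H$-minimal set. Fix one such set $M_0=\overline{Hx_0}$. Once $\mathscr{O}_H$ is shown to be continuous it is in particular upper semi-continuous, whence $(H,X)$ is locally weakly a.p with $H$ discrete by Lemma~\ref{R2.4}; and since $X$ is compact, locally weak and weak almost periodicity coincide (\ref{R1.5}b). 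Thus both formulations in the statement are delivered at once, and the whole problem becomes the continuity of $\mathscr{O}_H$.

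Next I would set up the structural backbone. Normality gives $tHx=Htx$, hence $t\,\overline{Hx}=\overline{Htx}$ for every $t\in G$ and $x\in X$; so $G$ permutes the $H$-minimal sets while $H$ fixes each of them. Writing $G=K^{-1}H$ with $K\subseteq G$ compact (syndeticity) and using $HM_0=M_0$, I obtain $GM_0=K^{-1}M_0$, which is compact by Lemma~\ref{R1.9}. Being compact, $G$-invariant and non-empty, $GM_0=X$ by minimality. Consequently each $x\in X$ may be written $x=k^{-1}m$ with $k\in K$, $m\in M_0$, and then $\overline{Hx}=\overline{Hk^{-1}m}=k^{-1}\overline{Hm}=k^{-1}M_0$; that is, every $H$-orbit closure is a translate $k^{-1}M_0$ with $k$ ranging over the \emph{compact} set $K$.

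Finally I would prove the continuity. Let $x_i\to x$ and write $x_i=k_i^{-1}m_i$ with $k_i\in K$, $m_i\in M_0$, so $\overline{Hx_i}=k_i^{-1}M_0$. Passing to a subnet I may assume $k_i\to k\in K$ and $m_i\to m\in M_0$. Here the decisive tool is Lemma~\ref{R1.9}: although $G$ is merely para-topological, on the compact set $K$ the assignment $t\mapsto(t_X)^{-1}=(t^{-1})_X$ is continuous for uniform convergence on compacta, so $(k_i^{-1})_X\to(k^{-1})_X$ uniformly on the compact set $M_0$. This yields simultaneously $k_i^{-1}M_0\to k^{-1}M_0$ in the Hausdorff topology and $k_i^{-1}m_i\to k^{-1}m$, so that $x=k^{-1}m$ and hence $\overline{Hx}=k^{-1}M_0$ by the structural identity above. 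Thus $\overline{Hx_i}\to\overline{Hx}$ along the subnet; since the limit $\overline{Hx}$ is independent of the subnet chosen, the usual subnet argument forces $\overline{Hx_i}\to\overline{Hx}$ for the whole net, proving $\mathscr{O}_H$ continuous.

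The main obstacle — and the point at which both hypotheses are genuinely spent — is the passage from algebraic homogeneity to topological (Hausdorff) continuity. Normality reduces every $H$-minimal set to a translate of a single $M_0$, but a priori these translates are indexed by all of $G$; syndeticity together with Lemma~\ref{R1.9} is precisely what compresses the index into the compact $K$ and, at the same time, rescues the continuity of inversion that may fail globally in a para-topological group. I expect the verification that $k_i^{-1}M_0\to k^{-1}M_0$ in the genuine Hausdorff sense, rather than in the weaker $\rightharpoonup$ sense of \ref{R1.1}, to be the step demanding the most care, since it is exactly the upper-semicontinuous half that becomes delicate outside the compact minimal setting (cf.\ the Warning and Theorem~\ref{R2.6}c).
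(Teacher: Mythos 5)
Your proof is correct and follows essentially the same route as the paper: both decompose $X$ into the translates $k^{-1}M_0$, $k\in K$, of a single $H$-minimal set via $G=K^{-1}H$, normality, and minimality of $(G,X)$, and then obtain Hausdorff continuity of $\mathscr{O}_H$ by extracting convergent subnets from the compact set $K$. The only real difference is how the possible discontinuity of inversion in the para-topological group is circumvented\,---\,you invoke the continuity of $t\mapsto (t^{-1})_X$ on $K$ from Lemma~\ref{R1.9}, while the paper instead multiplies the limit back by the convergent net $k_n\to\ell$ and uses continuity of $G\times 2^X\rightarrow 2^X$\,---\,and your treatment of an arbitrary net $x_i\to x$ (writing $x_i=k_i^{-1}m_i$ and passing to subnets in both coordinates) is, if anything, slightly more complete than the paper's, which only examines nets of the special form $k_n^{-1}x$.
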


\begin{proof}\footnote{Note here that $G$ is only a para-topological group. So if $K$ is a compact set in $G$, $K^{-1}$ need not be compact. If $G$ is a topological group, the proof of this theorem might be simplified, and moreover, the locally weakly a.p is independent of the topology of $G$.}
Let $K$ be a compact subset of $G$ with $G=K^{-1}H$ (cf.~\ref{R1.5}). Let $x,y\in X$. By $\overline{Gx}=X$ there is a net $t_n\in G$ such that $y=\lim_nt_nx$. Take $s_n\in K$ with $s_n\to s\in K$ such that $s_nt_n\in H$. Since $G\times X\rightarrow X$ is continuous, it follows that $sy=\lim s_nt_nx\in\overline{Hx}$. Hence $y\in s^{-1}\overline{Hx}=\overline{Hs^{-1}x}$. So by Lemma~\ref{R1.5}e, $\{k^{-1}\overline{Hx}\,|\,k\in K\}$ is a partition of $X$ into $H$-minimal sets. Let $k_n^{-1}x\to k^{-1}x$ with $k_n,k\in K$ and let $k_n^{-1}\overline{Hx}\to L$. We need prove $L=k^{-1}\overline{Hx}$. We may assume $k_n\to\ell\in K$. Then $x=\ell k^{-1}x$ (equivalently $\ell^{-1}x=k^{-1}x$), and, $\overline{Hx}=\ell L$ for $G\times2^X\rightarrow 2^X$ is continuous. Thus, $L=\ell^{-1}\overline{Hx}=\overline{H\ell^{-1}x}=\overline{Hk^{-1}x}=k^{-1}\overline{Hx}$. Then $\mathscr{O}_H$ is continuous. The proof is complete.
\end{proof}

\begin{rems}\label{R2.8}
\begin{enumerate}[(1)]
\item It should be noticed that since we do not know whether the $\Gamma$-recurrence of type I at a point of $X$ is equivalent to that of type II for non-discrete $G$, so $(1)\Leftrightarrow(2)$ in Theorem~\ref{R2.6}a is not trivial. In addition, Theorem~\ref{R2.6}b can be utilized for proving $(1)\Rightarrow(8)$ in Theorem~\ref{R2.6}a.
\item If $X$ has a neighborhood base at a point $x_0$ of clopen sets instead of $\dim X=0$, is $x_0$ a.p whenever $(G,X)$ is pointwise $\Gamma$-recurrent of type I?
\end{enumerate}
\end{rems}
%%%%%%%%%%%%%%%%%%%%%%%%%%%%%%%%%%%%%%%%%%%%%%%%%%%%%%%
%%%%%%%%%%%%%%%%%%%%%%%%%%%%%%%%%%%%%%%%%%%%%%%%%%%%%%%
\section{Distality, equicontinuity and regular almost periodicity}\label{secR.3}%%%
The main goal of $\S$\ref{secR3.1} is to prove Theorem~B stated in $\S$\ref{sec0} using Theorem~\ref{R2.6}a.
According to Furstenberg's structure theorem~\cite{F} every minimal distal compact flow has an equicontinuous factor. In $\S$\ref{secR3.1} we also consider the problem:
{\it When does a distal compact flow have an equicontinuous or regularly a.p factor?}
Note here that without minimality of $X$ the classical F-topology and $\tau$-topology arguments (cf.~\cite{F, V77} and \cite{EGS, G76, CD}) are invalid.
Moreover, we consider weakly rigid $\mathbb{Z}$-flows in $\S$\ref{secR3.2} and pointwise periodic $\mathbb{Z}$-flows in $\S$\ref{secR3.3}.
\subsection{Distal 0-dimensional flows}\label{secR3.1}%%%
It is easy to prove that if $(G,X)$ is equicontinuous, then $(G,X)$ is distal. No converse is valid in general. However, using Theorem~\ref{R2.6}a we can then conclude the following generalization of an important theorem of R.~Ellis:

\begin{sthm}\label{R3.1.1}
Let $(G,X)$ be a flow such that $\dim X=0$ and  $G$ is compactly generated by $\Gamma$.
Then the following conditions are pairwise equivalent:
\begin{enumerate}[(1)]
\item $(G,X)$ is pointwise product $\Gamma$-recurrent of type I (i.e. $(G,X\times X)$ is pointwise $\Gamma$-recurrent of type I).
\item $(G,X)$ is a distal flow.
\item $(G,X)$ is an equicontinuous flow.
\end{enumerate}
\end{sthm}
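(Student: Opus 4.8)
The plan is to establish the cycle of implications $(3)\Rightarrow(2)\Rightarrow(1)\Rightarrow(3)$, exploiting the fact that each of the three properties for $(G,X)$ lifts to the product flow $(G,X\times X)$, and then invoking Theorem~\ref{R2.6}a on the product space $X\times X$, which is itself $0$-dimensional with the same compactly generated phase group $G$.

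First I would dispatch $(3)\Rightarrow(2)$: as noted in the text preceding the theorem, equicontinuity always implies distality, and this is elementary (if $gx_i\to p$ and $gy_i\to p$ along a net with $x_i\to x$, $y_i\to y$, equicontinuity forces $gx_i$ and $gy_i$ to stay $\varepsilon$-close for every $\varepsilon\in\mathscr U$, so any limit point of $\overline{G(x,y)}$ on the diagonal forces $x=y$). Next, for $(2)\Rightarrow(1)$, I would use \ref{R1.6}: a distal flow is pointwise a.p. The key observation is that if $(G,X)$ is distal then so is $(G,X\times X)$, because $\overline{G((x,y),(x',y'))}$ meeting the diagonal of $(X\times X)^2$ would force $(x,y)=(x',y')$ by distality applied coordinatewise. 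Hence $(G,X\times X)$ is pointwise a.p, and by Lemma~\ref{R2.3}(1) every a.p point is $\Gamma$-recurrent of type II, which by Lemma~\ref{R2.3}(2) is $\Gamma$-recurrent of type I. This gives pointwise product $\Gamma$-recurrence of type~I, i.e. condition~(1).

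The heart of the argument is $(1)\Rightarrow(3)$. Assuming $(G,X\times X)$ is pointwise $\Gamma$-recurrent of type~I, I apply Theorem~\ref{R2.6}a to the flow $(G,X\times X)$ — legitimate because $\dim(X\times X)=0$ and $G$ is still compactly generated by $\Gamma$. Theorem~\ref{R2.6}a then yields that $(G,X\times X)$ is pointwise a.p and, crucially, that $R_o(X\times X)$ is closed (its condition~(5)). At this point I invoke Lemma~\ref{R1.4}c, which states precisely that $R_o(X\times X)$ closed is equivalent to $(G,X)$ being equicontinuous. This closes the loop and delivers condition~(3).

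The main obstacle I anticipate is confirming that product $\Gamma$-recurrence of type~I for a \emph{fixed} generating set $\Gamma$ of $G$ really is the correct hypothesis to feed into Theorem~\ref{R2.6}a — one must check that the recurrence is with respect to $\Gamma$-cones \emph{in the product flow} and that $\Gamma$ serves as a generating set for the $G$-action on $X\times X$ in the same way it does on $X$; since the $G$-action on $X\times X$ is diagonal, the $\Gamma$-lengths $|g|$ and the compact sets $\K(g)$ are computed entirely in $G$ and are identical for both flows, so $\Gamma$-cones in $G$ are the same objects regardless of which space $G$ acts on. This makes the transfer of Theorem~\ref{R2.6}a to the product painless once observed. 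The only other delicate point is the passage from distality of $(G,X)$ to distality of $(G,X\times X)$, but this is a routine coordinatewise verification using the definition in \ref{R1.6}, so I would state it briefly rather than belabor it.
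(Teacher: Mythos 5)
Your proposal is correct and follows essentially the same route as the paper: the author also dismisses $(3)\Rightarrow(2)\Rightarrow(1)$ as obvious and proves $(1)\Rightarrow(3)$ by applying Theorem~\ref{R2.6}a to the $0$-dimensional product flow $(G,X\times X)$ to get that $R_o(X\times X)$ is closed, then concluding equicontinuity via Lemma~\ref{R1.4}c. Your fleshing-out of the easy implications (equicontinuity $\Rightarrow$ distality, distality of $(G,X)$ $\Rightarrow$ distality of $(G,X\times X)$ $\Rightarrow$ pointwise a.p.\ $\Rightarrow$ type II $\Rightarrow$ type I recurrence via Lemma~\ref{R2.3}) is accurate and matches what the paper leaves implicit.
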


\begin{notes*}
\begin{enumerate}[1.]
\item As to (2)$\Leftrightarrow$(3), see~\cite[Theorem~2]{E58} for $G$ generative, \cite[Corollary~(3.11)]{MW76} for $(G,X)$ a minimal flow with $G$ a direct product of a compactly generated separable group with a compact group, and \cite[Corollary~1.9]{AGW} for $G$ finitely generated.

\item Let $T$ be a Hausdorff topological group compactly generated by $\Upsilon$ and $\mathbb{K}$ a compact Hausdorff topological group. Let $G=T\times \mathbb{K}$. Then $G=\left(\bigcup_{r=1}^\infty\Upsilon^r\right)\times\mathbb{K}=\bigcup_{r=1}^\infty(\Upsilon\times\mathbb{K})^r$. Since $\Gamma=\Upsilon\times \mathbb{K}$ is compact in $G$, $G$ is compactly generated by $\Gamma$. So Theorem~\ref{R3.1.1} generalizes the result of McMahon-Wu~\cite[Corollary~(3.11)]{MW76}.

\item Equicontinuity implies the pointwise regional distality under our situation. However, since $X$ need not be compact here, it need not imply equicontinuity that $(G,X)$ is pointwise regionally distal ($\textrm{RP}[x]=\{x\}\ \forall x\in X$) in general. In view of this, the approach of McMahon-Wu \cite{MW76} (using the Furstenberg tower of minimal distal flow) does not work for the non-minimal case.

\item To what extent can the condition of zero-dimensionality be relaxed for ``$(1)\Rightarrow(2)$'' in Theorem~\ref{R3.1.1}? By a slight modification of the proof of \cite[Proposition~6.5]{GM}, we can show that there exists a homeomorphism $f\colon X\rightarrow X$ on a compact metric space $X$ with $\dim X\ge1$ such that $(f,X)$ is minimal, weakly mixing, and $f^{n_k}\to \textrm{id}_X$ and $f^{m_k}\to\textrm{id}_X$ uniformly w.r.t. some sequences $n_k\nearrow+\infty$ and $m_k\searrow-\infty$. Thus $(f,X)$, as a $\mathbb{Z}$-flow, is pointwise product $\{\pm 1,0\}$-recurrent of type I (see Proposition~\ref{R4.6}) but it is not distal (for it is weakly mixing).
\end{enumerate}
\end{notes*}

\begin{proof}
``$(3)\Rightarrow(2)\Rightarrow(1)$'' is obvious. The remainder is to prove that ``$(1)\Rightarrow(3)$''. Now assume (1); then $(G,X\times X)$ is pointwise $\Gamma$-recurrent of type I with $\dim X\times X=0$. By Theorem~\ref{R2.6}a, the orbit-closure relation $R_o$ of $(G,X\times X)$ is closed and so by Lemma~\ref{R1.4}c, it follows that $(G,X)$ is equicontinuous.
The proof is completed.
\end{proof}

\begin{sthm}\label{R3.1.2}
Let $(G,X)$ be a compact flow, where $X$ is not connected and $G$ is compactly generated. If $(G,X)$ is distal, then $(G,X)$ has an equicontinuous non-trivial factor $(G,Y)$.
\end{sthm}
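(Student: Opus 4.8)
The plan is to realize the required factor as the quotient of $X$ by its partition into connected components. Write $X^*$ for the set of connected components of $X$ and $\pi\colon X\to X^*$ for the natural projection, with the quotient topology, and set $R_\pi=\{(a,b)\,|\,\pi(a)=\pi(b)\}$. Since $X$ is compact Hausdorff, components agree with quasi-components, the relation $R_\pi$ is closed in $X\times X$, and $X^*$ is compact, Hausdorff and totally disconnected; by the first example in \ref{R1.7} (namely \cite[Theorem~3.5]{HR}) this gives $\dim X^*=0$. As each $g\in G$ is a homeomorphism of $X$ it carries components onto components, so $R_\pi$ is $G$-invariant and $G$ acts on $X^*$ via $g\,\pi(x)=\pi(gx)$. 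I would deduce joint continuity of this action from the fact that $\pi$ is perfect (it is closed with compact fibres, the fibres being the components of the compact space $X$): then $\mathrm{id}_G\times\pi\colon G\times X\to G\times X^*$ is again perfect, hence a quotient map, so the continuous map $(g,x)\mapsto\pi(gx)$ descends to a jointly continuous action on $X^*$. Thus $(G,X^*)$ is a compact flow (Lagrange stability is automatic) and $\pi$ is a factor map, nontrivial precisely because $X$ is disconnected, so that $X^*$ has at least two points.

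Next I would show $(G,X^*)$ is distal and then invoke Theorem~\ref{R3.1.1}. I prefer a direct argument over the enveloping semigroup. Since $(G,X)$ is distal, so is $(G,X\times X)$: a proximal pair of distinct points of $X\times X$ would differ in some coordinate and, along the realizing net, yield a proximal pair of distinct points of $X$. Hence $(G,X\times X)$ is pointwise a.p.\ by \ref{R1.6}. Now suppose $\xi\ne\eta$ in $X^*$ were proximal, say $g_n\xi\to\zeta$ and $g_n\eta\to\zeta$ for a net $\{g_n\}\subseteq G$. Choose $x\in\pi^{-1}(\xi)$ and $y\in\pi^{-1}(\eta)$, so $x\ne y$; by compactness pass to a subnet with $g_nx\to x^*$ and $g_ny\to y^*$, giving $\pi(x^*)=\zeta=\pi(y^*)$, i.e.\ $(x^*,y^*)\in R_\pi$. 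The point $(x,y)$ is a.p.\ in $X\times X$, so $\overline{G(x,y)}$ is minimal by \ref{R1.5}a; as $(x^*,y^*)\in\overline{G(x,y)}$, minimality forces $\overline{G(x^*,y^*)}=\overline{G(x,y)}\ni(x,y)$. Because $R_\pi$ is closed and $G$-invariant, $(x,y)\in\overline{G(x^*,y^*)}\subseteq R_\pi$, whence $\pi(x)=\pi(y)$, contradicting $\xi\ne\eta$. Therefore $(G,X^*)$ is distal, and being a $0$-dimensional compact distal flow with $G$ compactly generated, it is equicontinuous by Theorem~\ref{R3.1.1} (the implication $(2)\Rightarrow(3)$); taking $Y=X^*$ finishes the proof.

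I expect the main obstacle to be the careful construction of the factor $(G,X^*)$ rather than the final invocation of Theorem~\ref{R3.1.1}: one must verify that the component relation is closed and $G$-invariant and, above all, that the induced action on the quotient is jointly continuous, which is where the perfect-map argument is needed, and simultaneously confirm $\dim X^*=0$. Once $(G,X^*)$ is identified as a $0$-dimensional compact distal flow, equicontinuity is immediate; the transfer of distality to the factor, though requiring the minimal-orbit argument above, is routine.
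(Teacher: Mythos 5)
Your proof is correct and follows essentially the same route as the paper: pass to the quotient of $X$ by the component relation, observe that it is a non-trivial compact $0$-dimensional distal $G$-flow, and apply Theorem~\ref{R3.1.1}. The only difference is one of detail: the paper delegates the verification that the component relation is a closed $G$-invariant equivalence relation and that the quotient carries a flow structure to \cite[Def.~2.2 and Proposition~2.3]{MW76}, whereas you supply the perfect-map and minimal-orbit-closure arguments explicitly.
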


\begin{proof}
Define the ``component relation'' $R_c$ of $X$ as follows: $(x,x^\prime)\in R_c$ if and only if $x$ and $x^\prime$ are in the same connected component of $X$. Then it is not difficult to verify that $R_c$ is an $G$-invariant closed equivalence relation on $X$ (cf.~\cite[Def.~2.2 and Proposition~2.3]{MW76}). Set $Y=X/R_c$ equipped with the quotient topology and let $\rho\colon X\rightarrow Y$ be the canonical quotient mapping. Then $(G,Y)$ is a distal flow with $Y$ to be compact 0-dimensional, and, $\rho\colon(G,X)\rightarrow(G,Y)$ is an extension. Since $X$ is not connected, so $Y$ is not a singleton. By Theorem~\ref{R3.1.1}, $(G,Y)$ is equicontinuous. The proof is completed.
\end{proof}

\begin{sthm}[{cf.~\cite[Thm.~3]{E58} for $G$ generative; \cite[Problem~(1)]{E58}}]\label{R3.1.3}
Let $(G,X)$ be a distal flow with $G$ compactly generated, such that $\dim\overline{Gx}=0$ for all $x\in X$, where $X$ need not be locally compact. Then, under the discrete topology of $G$, $(G,X)$ is pointwise regularly a.p, and, $(G,\overline{Gx})$ is a regularly a.p subflow for every $x\in X$.
\end{sthm}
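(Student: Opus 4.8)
The plan is to reduce the whole statement to the minimal orbit closures and then feed them into Theorem~\ref{R3.1.1}. Since $(G,X)$ is distal it is pointwise a.p by \ref{R1.6}, so by \ref{R1.5}a every orbit closure $Z:=\overline{Gx}$ is minimal; by Lagrange stability $Z$ is compact and by hypothesis $\dim Z=0$. As a subflow of a distal flow $(G,Z)$ is itself distal, so Theorem~\ref{R3.1.1}, applied to the compact $0$-dimensional flow $(G,Z)$, forces $(G,Z)$ to be equicontinuous; since $Z$ is compact this equicontinuity is in fact uniform (cf.\ the closing remark of \ref{R1.4}c). Because $Gx\subseteq Z$, for every $U\in\mathfrak{N}_x$ one has $N_G(x,U)=N_G(x,U\cap Z)$, so the regular almost periodicity of $x$ relative to the subflow and relative to $X$ coincide. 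Hence it suffices to prove: \emph{each such compact $0$-dimensional equicontinuous minimal $(G,Z)$ is uniformly regularly a.p, and every point of $Z$ is regularly a.p}; this delivers simultaneously the ``subflow'' assertion and the pointwise regular almost periodicity of $(G,X)$.

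The heart of the argument is to convert equicontinuity plus zero-dimensionality into a genuinely \emph{finite}, $G$-permuted clopen partition. I claim that for every $\alpha\in\mathscr{U}$ there is a finite clopen partition $\mathcal{P}=\{P_1,\dots,P_n\}$ of $Z$ into $\alpha$-small pieces with $gP_j\in\mathcal{P}$ for all $g\in G$ and all $j$. To build it, first fix a finite clopen partition $\mathcal{Q}=\{Q_1,\dots,Q_m\}$ with $\alpha$-small atoms, and choose $\rho\in\mathscr{U}$ (symmetric) so small that $\rho[Q_i]\cap Q_j=\emptyset$ for $i\neq j$; then every $\rho$-small subset of $Z$ lies in a single $Q_i$. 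By uniform equicontinuity pick $\delta\in\mathscr{U}$ with $(gy,gz)\in\rho$ for all $g\in G$ whenever $(y,z)\in\delta$, and take a finite clopen partition $\{W_1,\dots,W_N\}$ of $Z$ into $\delta$-small sets. For each $g$ the image $gW_k$ is then $\rho$-small, hence contained in a single $Q_i$; thus $\{gW_1,\dots,gW_N\}$ refines $\mathcal{Q}$, equivalently $\{W_1,\dots,W_N\}$ refines $g^{-1}\mathcal{Q}$. Consequently every $g^{-1}Q_i$, and hence (letting $g$ run over the group $G$) every translate $gQ_i$, is a union of members of $\{W_1,\dots,W_N\}$. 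Therefore $\{\,gQ_i\mid g\in G,\ 1\le i\le m\,\}$ is a \emph{finite} family of clopen sets; the finite Boolean algebra it generates is carried into itself by every $g\in G$, so its atoms form the required finite partition $\mathcal{P}$, which refines $\mathcal{Q}$ and hence has $\alpha$-small atoms.

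Finally, the permutation action of $G$ on $\mathcal{P}$ is a homomorphism $\phi\colon G\to\mathrm{Sym}(\mathcal{P})$ into a finite group, so $A:=\ker\phi=\{g\in G\mid gP_j=P_j\ \forall j\}$ is a normal subgroup of finite index. A finite-index subgroup is discretely syndetic ($G=F^{-1}A$ for a finite, hence compact in the discrete topology, transversal $F$; cf.\ \ref{R1.5}), so $A$ is a syndetic normal subgroup of $G$. If $g\in A$ and $y\in Z$, then $y$ and $gy$ lie in the same $\alpha$-small atom, whence $(y,gy)\in\alpha$; thus $Ay\subseteq\alpha[y]$ for all $y\in Z$, proving that $(G,Z)$ is uniformly regularly a.p under the discrete topology. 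Taking the initial partition $\mathcal{Q}$ to refine $\{U,Z\setminus U\}$ for a prescribed clopen $U\in\mathfrak{N}_x$ (using $\dim Z=0$) forces the atom of $\mathcal{P}$ through $x$ to lie inside $U$, so $A\subseteq N_G(x,U)$; as $U$ ranges over a clopen base at $x$ this gives $x\in P_{r.a.p}(X)$. The main obstacle I anticipate is exactly the finiteness in the second paragraph: passing from equicontinuity to a $G$-permuted clopen partition is where both hypotheses (equicontinuity and zero-dimensionality) are genuinely needed, and it is what replaces the enveloping-group machinery that would be available only for topological, rather than para-topological, phase groups.
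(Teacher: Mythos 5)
Your argument is correct, but it reaches the conclusion by a genuinely different route from the paper. Both proofs begin identically: restrict to $Z=\overline{Gx}$, which is compact, minimal, $0$-dimensional and distal, and invoke Theorem~\ref{R3.1.1} to get equicontinuity (uniform, by compactness). From there the paper passes to the Ellis enveloping semigroup $E(Z)$, which for an equicontinuous compact flow is a compact topological group; it pulls back a clopen $U\in\mathfrak{N}_x$ to a clopen neighborhood $\mathbb{U}$ of the identity in $E(Z)$, applies the Hewitt--Ross theorem to find a clopen normal subgroup $N\subseteq\mathbb{U}$ of finite index, and sets $H=N\cap G$, obtaining $Hx\subseteq U$ with $H$ discretely syndetic and normal. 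You instead stay entirely inside the acting group: from uniform equicontinuity plus zero-dimensionality you manufacture a finite $G$-permuted clopen partition of $Z$ with $\alpha$-small atoms (the finiteness of $\{gQ_i\}$ via the auxiliary $\delta$-fine partition $\{W_k\}$ is the key point, and it is sound), and take $A$ to be the kernel of the induced permutation representation. In effect you re-prove the relevant fragment of the Hewitt--Ross structure theorem by hand, bypassing $E(Z)$ altogether. What your version buys: it is self-contained and elementary, and it directly yields the \emph{uniform} statement $Ay\subseteq\alpha[y]$ for all $y\in Z$ with a single syndetic normal $A$, i.e.\ the ``regularly a.p subflow'' clause of the theorem, which the paper's pointwise construction ($Hx\subseteq U$ for the fixed $x$) does not immediately give and which the paper later recovers from equicontinuity (cf.\ the step $(5)\Leftrightarrow(7)$ in Theorem~\ref{R3.2.2}). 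What the paper's version buys is brevity, at the price of citing the enveloping-group machinery and \cite[Thm.~7.6]{HR} as black boxes. One small cosmetic point: the uniform equicontinuity of $(G,Z)$ is the standard compactness upgrade of the pointwise notion (or follows from the closedness of $R_o(Z\times Z)$ together with the remark after Lemma~\ref{R1.4}c); your citation is adequate but could be made explicit.
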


\begin{note*}
A normal discretely syndetic closed subgroup of $G$ is clopen and has the finite index in $G$. However, the subgroup itself need not be discrete.
\end{note*}

\begin{proof}
Let $x\in X$. Using $\overline{Gx}$ instead of $X$, we can assume $\overline{Gx}=X$ with $\dim X=0$. By Theorem~\ref{R3.1.1}, $(G,X)$ is an equicontinuous minimal compact 0-dimensional flow. Let $E(X)$, the Ellis enveloping semigroup of $(G,X)$, be the closure of $G$ in $X^X$ with the topology of pointwise/uniform convergence. Then $E(X)$ is a compact topological group~\cite[4.4]{E69}. Since $\dim X=0$, we can take $U\in\mathfrak{N}_{x}$ to be clopen. Since $E(X)\rightarrow X$, defined by $p\mapsto px$ with $e=\textrm{id}_X\mapsto x$, is continuous, thus $\mathbb{U}=\{p\,|\,p\in E(X), px\in U\}$ is a clopen neighborhood of $e$ in $E(X)$. By \cite[Theorem~7.6]{HR},\footnote{\textbf{Theorem}\,(Hewitt and Ross; cf.~\cite[Thm.~7.6]{HR}).\quad {\it Let $G$ be a compact topological group and let $\mathds{U}$ be a clopen neighborhood of the identity. Then $\mathds{U}$ contains a clopen normal subgroup $N$ of $G$ and $G/N$ is finite.}\label{f4}} it follows that there exists a clopen normal subgroup $N$ of $E(X)$ with $N\subseteq \mathbb{U}$ and $E(X)/N$ is finite. Since $G$ is dense in $E(X)$, there is a finite subset $K$ of $G$ such that $KN=E(X)$. Set $H=N\cap G$. Then $KH=G$ and $H$ is a closed normal discretely syndetic subgroup of $G$. By $H\leqq N\subseteq \mathbb{U}$, $Hx\subseteq U$ and $x$ is a regularly a.p point under $(G,X)$ with discrete phase group.
\end{proof}

\begin{cor*}
Let $(G,X)$ be a minimal distal flow with $G$ compactly generated. If $X$ is not connected, then $(G,X)$ is has a regularly a.p factor.
\end{cor*}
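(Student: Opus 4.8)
The plan is to pass to the \emph{component-relation factor} of $(G,X)$ and then invoke the two immediately preceding theorems; the corollary is essentially their combination. Since $(G,X)$ is minimal, $X$ is compact (cf.~\ref{R1.4}), and it is distal, non-connected, with $G$ compactly generated, so the hypotheses of Theorem~\ref{R3.1.2} are met. Accordingly, I would form $Y=X/R_c$, where $R_c$ is the component relation, with canonical factor map $\rho\colon(G,X)\to(G,Y)$. From the proof of Theorem~\ref{R3.1.2} I would extract exactly the structural facts established there: $Y$ is \emph{compact and $0$-dimensional}, the flow $(G,Y)$ is \emph{distal}, and, because $X$ is not connected, $Y$ is not a singleton so $\rho$ is a non-trivial factor.

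Next I would verify the hypotheses of Theorem~\ref{R3.1.3} for $(G,Y)$. Minimality of $(G,X)$ is inherited by any factor, so $(G,Y)$ is a minimal distal flow with $G$ compactly generated. Because $Y$ is compact $0$-dimensional, each orbit closure $\overline{Gy}$ is a closed subspace of $Y$, hence itself $0$-dimensional, so $\dim\overline{Gy}=0$ for every $y\in Y$. Applying Theorem~\ref{R3.1.3} to $(G,Y)$ then gives that, under the discrete topology of $G$, $(G,Y)$ is pointwise regularly a.p., and indeed $(G,\overline{Gy})=(G,Y)$ is regularly a.p.

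Finally I would read off the conclusion using Def.~\ref{R1.5}c: since $(G,Y)$ is minimal, every point has dense orbit, and by the previous step every such point is regularly a.p.; thus $(G,Y)$ is a (point-)regularly a.p. factor of $(G,X)$. I do not expect a genuine obstacle here: the only substantive verifications are that $Y$ is $0$-dimensional and that distality descends to $(G,Y)$, both of which are already contained in the construction behind Theorem~\ref{R3.1.2}. The real content of the corollary lies entirely in the prior theorems, and it follows by feeding the $0$-dimensional distal factor produced by Theorem~\ref{R3.1.2} into Theorem~\ref{R3.1.3}.
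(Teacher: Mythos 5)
Your proposal is correct and follows exactly the route the paper intends: the corollary is stated without proof precisely because it is the immediate combination of Theorem~\ref{R3.1.2} (which produces the compact $0$-dimensional distal component-relation factor $Y=X/R_c$, non-trivial since $X$ is not connected) with Theorem~\ref{R3.1.3} applied to the minimal flow $(G,Y)=(G,\overline{Gy})$, yielding a regularly a.p.\ factor. The only cosmetic remark is that your closing appeal to ``point-regularly a.p.'' (Def.~\ref{R1.5}c(3)) is unnecessary, since you had already obtained the stronger conclusion that $(G,Y)$ itself is regularly a.p.\ as a minimal orbit closure in Theorem~\ref{R3.1.3}.
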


\begin{sthm}\label{R3.1.4}
Let $(G,X)$ be a metric flow, which is pointwise regularly a.p under the discrete topology of $G$. Suppose $G$ contains only countably many clopen normal subgroups of finite index. Then the set of points $R$ at which $(G,X)$ is equicontinuous is residual in $X$.
\end{sthm}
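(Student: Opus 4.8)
The plan is to run a Baire-category argument, using the countable family of finite-index normal subgroups to manufacture a dense $G_\delta$ set on which equicontinuity can be checked. First I would record the soft part. Since $X$ is a locally compact metric space it is a Baire space, and the set $R$ of equicontinuity points is automatically $G_\delta$: writing $R_n=\{x\in X:\exists\,\delta>0 \text{ with } d(x,y)<\delta\Rightarrow d(gx,gy)\le 1/n\ \forall g\in G\}$, each $R_n$ is open and $R=\bigcap_n R_n$. Hence it suffices to show that $R$ is dense, equivalently that each closed set $X\setminus R_n$ has empty interior, i.e. that each $R_n$ is dense.

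Next I would build the cover. Enumerate the countably many clopen normal subgroups of finite index as $H_1,H_2,\dots$, and for integers $k,m\ge 1$ set $B_{k,m}=\{x\in X:d(hx,x)\le 1/m\ \forall h\in H_k\}$; as each $x\mapsto d(hx,x)$ is continuous, $B_{k,m}=\bigcap_{h\in H_k}\{x:d(hx,x)\le 1/m\}$ is closed. Because $(G,X)$ is pointwise regularly a.p.\ under the discrete topology, for every $x$ and every $m$ there is a finite-index normal subgroup $H$ with $Hx\subseteq\overline{B}(x,1/m)$, so $x\in B_{k,m}$ for some $k$; thus $X=\bigcup_k B_{k,m}$ for each fixed $m$. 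Since $X$ is Baire, $D_m:=\bigcup_k\mathrm{int}\,B_{k,m}$ is dense open, and $D^\ast:=\bigcap_m D_m$ is a dense $G_\delta$. The natural goal is then $D^\ast\subseteq R$.

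To attempt $D^\ast\subseteq R$, fix $x\in D^\ast$ and $\varepsilon>0$, and choose $k,m$ with $x\in\mathrm{int}\,B_{k,m}$. Normality gives a decomposition $G=\bigsqcup_{i=1}^{M}s_iH_k$ into finitely many cosets ($M=[G:H_k]$, which is finite since $H_k$ is clopen of finite index), so every $g$ equals $s_ih$ with $h\in H_k$. On a small neighbourhood $V\subseteq\mathrm{int}\,B_{k,m}$ all points are moved at most $1/m$ by $H_k$, and by local compactness the finitely many coset homeomorphisms $s_1,\dots,s_M$ are uniformly continuous on a fixed compact neighbourhood $C$ of $x$; Lagrange stability makes this uniform continuity available on the compact orbit closures into which the relevant points fall. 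Estimating $d(gx,gy)=d(s_ihx,s_ihy)$ through $s_ix,s_iy$ via the triangle inequality yields a bound of the shape $2\,\omega_{s_i}(1/m)+\omega_{s_i}(d(x,y))$, where $\omega_{s_i}$ is the modulus of continuity of $s_i$ on $C$.

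The main obstacle is precisely this estimate. The term $\omega_{s_i}(d(x,y))$ is harmless, since it tends to $0$ as $y\to x$ once $k$ is fixed; but the irreducible fuzz $2\,\omega_{s_i}(1/m)$ forces one to pick the scale $1/m$ small compared with the moduli $\omega_{s_i}$, and those cosets — hence their moduli — depend on $k$, which in turn was produced from $m$, so the two choices are coupled. This coupling is the technical heart, and it is exactly where the hypotheses must be combined: finite index (only $M$ cosets to control), Lagrange stability (uniform continuity of the $s_i$ on the compact orbit closure), and membership of $x$ in $D_m$ for \emph{every} $m$. I expect to break the coupling by a diagonal/compactness argument across the countable family $\{H_k\}$: were no admissible scale to work at $x$, one could extract along $m\to\infty$ pairs of points collapsing to $x$ whose images under suitable $g_m\in G$ stay $\varepsilon$-separated while remaining in a compact orbit closure, a contradiction. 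An equivalent and possibly cleaner route is to introduce the $G$-invariant orbital pseudometric $D(y,z)=\sup_{g\in G}d(gy,gz)$, observe that $D\ge d$ is lower semicontinuous (a supremum of continuous pseudometrics) and hence of Baire class $1$ on the metric space, and identify $R$ with the points where $D$ is topologically compatible with $d$; here Baire's theorem gives a residual set of continuity points of $D$ in $X\times X$, and the remaining delicacy is to transfer this onto the diagonal, which is exactly where the regular-almost-periodic control of $D$ along orbits would be used.
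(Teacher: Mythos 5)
Your Baire-category construction is exactly the paper's: the sets you call $B_{k,m}$ are its $E(n,m)=\{x:H_nx\subseteq B(x,1/m)\}$, closed and covering $X$ by pointwise regular almost periodicity, so $D^\ast=\bigcap_m\bigcup_k\mathrm{int}\,B_{k,m}$ is residual. The gap is entirely in the second half, and it is real: neither of your two proposed endgames closes. The modulus-of-continuity route fails for the reason you yourself identify --- the coset representatives $s_1,\dots,s_M$ depend on $k=k(m)$, so you cannot choose $m$ large relative to $\omega_{s_i}$ --- and the ``diagonal/compactness argument'' you hope will break the coupling does not produce a contradiction by itself. Extracting points $y_m,y_m'\to x$ and $g_m\in G$ with $d(g_my_m,g_my_m')\ge\varepsilon$ inside a compact set just restates the failure of equicontinuity at $x$; to contradict something you need an additional dynamical input, and you never name one. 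The missing ingredient is \emph{distality}: because the intersection of two syndetic normal subgroups is again a syndetic normal subgroup (Lemma~\ref{R1.5}d), pointwise regular almost periodicity under the discrete topology makes $(G,X\times X)$ pointwise a.p., hence $(G,X)$ distal. Your second route (lower semicontinuity of $D(y,z)=\sup_g d(gy,gz)$) stalls at exactly the same place; ``transferring continuity onto the diagonal'' is the whole problem, not a remaining delicacy.

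Here is how the paper finishes, so you can see where distality enters. For $x\in D^\ast$ and a compact $U\in\mathfrak{N}_x$, membership in some $\mathrm{int}\,B_{k,m}$ yields $V\in\mathfrak{N}_x$ and a discretely syndetic normal subgroup $A$ with $AV\subseteq U$; writing $G=K^{-1}A$ with $K$ finite gives $G(V\times V)\subseteq K^{-1}(U\times U)$, which is compact, and consequently $\bigcap_V\overline{G(V\times V)}=\bigcap_V G(V\times V)$ over neighborhoods $V\subseteq U$ of $x$. If equicontinuity failed at $x$ with index $\varepsilon$, the finite intersection property of $\{\,\overline{G(N\times N)}\cap(X\times X\setminus\varepsilon)\,\}$ would produce a pair $(y,z)$ with $(y,z)\notin\varepsilon$ (so $y\ne z$) lying in $G(N\times N)$ for \emph{every} $N\in\mathfrak{N}_x$ --- note the intersection lands in the orbit sets themselves, not merely their closures, which is what the precompactness bought. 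Then $g_N^{-1}(y,z)\in N\times N$ converges to $(x,x)$, so $(x,x)\in\overline{G(y,z)}$ with $y\ne z$, contradicting distality. This qualitative argument is what replaces your quantitative estimate and is the step your proposal is missing.
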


\begin{proof}
Let $\{H_n\,|\,n=1,2,\dotsc\}$ be the set of clopen normal subgroups with finite index of $G$ and let $\rho$ be a metric on $X$. For all $m,n\in\mathbb{N}$ set
$E(n,m)=\{x\in X\,|\,H_nx\subseteq B(x,1/m)\}$, where $B(x,r)=\{y\,|\,\rho(x,y)\le r\}$ for $r>0$.
Then $E(n,m)$ is a closed subset of $X$ and $\bigcup\{E(n,m)\,|\,n=1,2,\dotsc\}=X$. Hence $E(m):=\bigcup_{n}\textrm{int}\,E(n,m)$ is an everywhere dense open subset of $X$. Let $E=\bigcap_mE(m)$. Then $E$ is a residual subset of $X$. We will prove that $E\subseteq R$.

First, from the definition of $E$, it follows that given any $U\in\mathfrak{N}_x$, $x\in E$, there exists a $V\in\mathfrak{N}_x$ and a discretely syndetic normal subgroup $A$ of $G$ such that $AV\subseteq U$. %Thus $(G,X\times X)$ is locally weakly a.p at $(x,x)$, for all $x\in E$.

Let $x\in E$. Let $U\in\mathfrak{N}_x$ be compact; then there are $V\in\mathfrak{N}_x$ and $A$ a discretely syndetic subset of $G$ such that $AV\subseteq U$. Let $K$ be a finite subset of $G$ such that $K^{-1}A=G$.
%Associated to $(G,X)$, let $\mathbb{G}=\{X\xrightarrow{t_X\colon x\mapsto tx}X\,|\,t\in G\}\subseteq\textrm{Homeo}\,(X)$ be endowed with the topology of uniform %convergence on compacta. Since $G\times X\rightarrow X$ is continuous, the canonical mapping $G\xrightarrow{t\mapsto t_X}\mathbb{G}$ is continuous. On the other hand, let %$\mathbb{K}^{-1}=\{X\xrightarrow{t_X^{-1}}X\,|\,t\in K\}\subseteq\mathbb{G}$, noting $(t_X)^{-1}=(t^{-1})_X$ for all $t\in G$. It is clear that $K\xrightarrow{t\mapsto %t_X^{-1}}\mathbb{K}^{-1}\subseteq\textrm{Homeo}\,(X)$ is continuous; that is, $t_n\to t$ in $K$ implies that $t_{nX}^{-1}\to t_X^{-1}$ in $\textrm{Homeo}\,(X)$ (not in %$K^{-1}$).
Since $K$ is finite, it follows that $K^{-1}(U\times U)$ is compact in $X\times X$.
Then
$$G(V\times V)=K^{-1}A(V\times V)\subseteq K^{-1}(U\times U)\subseteq G(U\times U)$$
shows that $\overline{G(V\times V)}$ is compact with $\overline{G(V\times V)}\subseteq G(U\times U)$ and that
$$
\bigcap\{\overline{G(V\times V)}\,|\, V\in\mathfrak{N}_x\textrm{ and }V\subseteq U\}=\bigcap\{G(V\times V)\,|\, V\in\mathfrak{N}_x\textrm{ and }V\subseteq U\}.
$$
Then the proof that $(G,X)$ is equicontinuous at $x$ may be completed as in Proof of \cite[Lemma~1]{G56}.

Indeed, suppose the contrary that $G$ is not equicontinuous at $x$; then there exists an open index $\varepsilon\in\mathscr{U}$ such that $G(N\times N)\nsubseteq \varepsilon$ for every $N\in\mathfrak{N}_x$. We set $\varepsilon^\prime=X\times X-\varepsilon$ and define
$\mathfrak{F}=\{G(N\times N)\cap \varepsilon^\prime\,|\,N\in\mathfrak{N}_x\}$. Since $\bar{\mathfrak{F}}=\{\overline{F}\,|\,F\in\mathfrak{F}\}$ has the finite intersection property, hence $\emptyset\not=\bigcap\bar{\mathfrak{F}}=\bigcap\mathfrak{F}$. This implies that $G$ is not distal on $X$. However, as $(G,X)$ is pointwise regularly a.p under the discrete topology of $G$, it follows by Lemma~\ref{R1.5}d that $(G,X)$ is distal. The proof is complete.
\end{proof}

By a slight modification of the above proof with `$K$ compact' instead of `$K$ finite' and using Lemma~\ref{R1.9}, we can present another formulation of Theorem~\ref{R3.1.4} as follows:

\begin{R3.1.4'}
Let $(G,X)$ be a distal pointwise regularly a.p flow with $X$ a metric space. Suppose $G$ contains only countably many normal subgroups. Then the set of points $R$ at which $(G,X)$ is equicontinuous is residual in $X$.
\end{R3.1.4'}

\begin{cor*}[{cf.~\cite[Thm.~4]{E58} for $G$ generative; \cite[Problem~(1)]{E58}}]
Let $(G,X)$ be a distal metric flow
such that $\dim \overline{Gx}=0$ for all $x\in X$. Suppose $G$ is compactly generated and contains only countably many normal clopen subgroups of finite index. Then the set of points $R$ at which $(G,X)$ is equicontinuous is a residual subset of $X$.
\end{cor*}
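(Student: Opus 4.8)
The plan is to deduce the corollary by chaining Theorem~\ref{R3.1.3} into Theorem~\ref{R3.1.4}; the hypotheses are arranged precisely so that the former supplies the pointwise regular almost periodicity that the latter requires. No genuinely new argument is needed, only a careful check that the two sets of hypotheses meet.

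First I would apply Theorem~\ref{R3.1.3}. Its assumptions---$(G,X)$ distal, $G$ compactly generated, and $\dim\overline{Gx}=0$ for every $x\in X$---are exactly those granted here (metrizability and the countability of normal clopen subgroups play no role at this stage). Its conclusion is that, under the discrete topology of $G$, $(G,X)$ is pointwise regularly almost periodic. What matters for the next step is the \emph{form} of this regular almost periodicity: the proof of Theorem~\ref{R3.1.3} realizes it by subgroups $H=N\cap G$, where $N$ is a clopen normal subgroup of finite index of the compact enveloping group $E(\overline{Gx})$ of $(G,\overline{Gx})$, and by the Note following Theorem~\ref{R3.1.3} each such $H$ is a closed normal discretely syndetic---hence clopen of finite index---subgroup of $G$. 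Thus the sets $N_G(x,U)$ are filled out by \emph{clopen normal subgroups of finite index}, which is the precise shape of regular almost periodicity that Theorem~\ref{R3.1.4} consumes.

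It then remains to invoke Theorem~\ref{R3.1.4} on $(G,X)$: the space $X$ is metric by hypothesis, $(G,X)$ is pointwise regularly a.p under the discrete topology by the previous paragraph, and $G$ has only countably many clopen normal subgroups of finite index by hypothesis. Theorem~\ref{R3.1.4} then yields that the equicontinuity set $R$ is residual in $X$, which is the assertion. The one point requiring attention---and the only real content---is that the Baire-category argument inside Theorem~\ref{R3.1.4} enumerates the countable family $\{H_n\}$ of clopen normal finite-index subgroups and needs the subgroups witnessing regular almost periodicity to lie in this family; the Note after Theorem~\ref{R3.1.3} is exactly what guarantees this, so the countability hypothesis makes the category argument close and nothing further is required.
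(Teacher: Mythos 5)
Your proposal is correct and follows exactly the paper's route: the paper's own proof is the one-line remark that the corollary ``follows easily from Theorem~\ref{R3.1.3} and Theorem~\ref{R3.1.4}'', which is precisely the chaining you carry out. Your additional check that the subgroups produced by Theorem~\ref{R3.1.3} are clopen normal of finite index, so that they belong to the countable family enumerated in the Baire-category argument of Theorem~\ref{R3.1.4}, is a useful explicit verification of a point the paper leaves implicit.
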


\begin{proof}
This follows easily from  Theorem~\ref{R3.1.3} and Theorem~\ref{R3.1.4}.
\end{proof}

\begin{exa}[{An explicit realization of \cite[Exa.~(3.4.1)]{MW76}}]\label{R3.1.5}
Let $\mathbb{Z}_2=\{0,1\}$ be the cyclic group of order $2$ ($1+1=0$). Let $Y=\prod_{-\infty}^{+\infty}\mathbb{Z}_2$. Then $Y$ is a compact totally disconnected topological group under the product topology. Let $T=\{t\in Y\,|\,t(n)=0\textrm{ for all but a finite set of $n$'s}\}$. Clearly, $T$ is a dense subgroup of $Y$. Now we can define an equicontinuous minimal flow $(T,Y)$ by
$$
T\times Y\rightarrow Y,\quad (t,y)\mapsto ty=(t(n)+y(n))_{n\in\mathbb{Z}}.
$$
Let $X=Y_{+1}\cup Y_{-1}$, where $Y_{+1}$ and $Y_{-1}$ are two copies of $Y$. Let $o\in Y$ with $o(n)=0$ for all $n\in\mathbb{Z}$. For $i\ge1$ take $\xi_i,\eta_i\in Y$ such that
$$
\xi_i(n)=\begin{cases}0& \textrm{if }n\le i,\\1 & \textrm{if }n>i;\end{cases}\quad \eta_i(n)=\begin{cases}1& \textrm{if }n<-i,\\0 & \textrm{if }n\ge -i.\end{cases}
$$
Then $\xi_i\to o$ and $\eta_i\to o$ as $i\to\infty$. Moreover, the cylinders
$$
[0_{-j},\dotsc,0_j,1_{j+1}]=\{y\in Y\,|\,y(n)=0\textrm{ for }|n|\le j\textrm{ and }y(j+1)=1\},\quad j=1,2,\dotsc,
$$
are disjoint clopen subsets of $Y$ such that
$\xi_i\in[0_{-i},\dotsc,0_i,1_{i+1}]$ and $\eta_i\notin\bigcup_{j=1}^\infty[0_{-j},\dotsc,0_j,1_{j+1}]$ for all $i\ge1$.
For each $i\ge1$, define $b_i\colon X\rightarrow X$ by: for $y_\epsilon\in Y_\epsilon$, $\epsilon=\pm1$, put
$$
b_iy_\epsilon=\begin{cases}y_\epsilon & \textrm{if }y_\epsilon\notin[0_{-i},\dotsc,0_{i}, 1_{i+1}]_\epsilon,\\y_{-\epsilon} & \textrm{if }y_\epsilon\in[0_{-i},\dotsc,0_{i}, 1_{i+1}]_\epsilon;\end{cases}
$$
and for $t\in T$, define $t\colon X\rightarrow X$ by $ty_\epsilon=(ty)_\epsilon$. Let $G$ be the countable discrete group generated by $T$ and $\{b_i\}_{i=1}^\infty$. Then $(G,X)\xrightarrow{\pi\colon y_\epsilon\mapsto y}(G,Y)$ is a 2-to-1 distal/equicontinuous extension of the minimal equicontinuous $(G,Y)$; moreover, $(o_{+1},o_{-1})$ is a regionally proximal pair under $(G,X)$, hence $(G,X)$ is a minimal distal non-equicontinuous (non-a.a) flow, where $X$ is a 0-dimensional compact metric space and $G$ is not compactly generated. In fact, the regionally proximal cell $\textrm{RP}[y_\epsilon]=\{y_\epsilon,y_{-\epsilon}\}$ for all $y_\epsilon\in X$ (noting $\textrm{RP}_\pi=\Delta_X$). Moreover, although $X$ is a homogeneous space, yet $(G,X)$ is not dynamically homogeneous (i.e. $\textrm{Aut}\,(G,X)x\not=X$ for $x\in X$; otherwise, $(G,X)$ is equicontinuous by Auslander \cite[Theorem~2.13]{A88}).
\end{exa}

\begin{exa}[{a modification of D.~McMahon's example; cf.~\cite[Exa.~(3.4.2)]{MW76}}]\label{R3.1.6}
As in Example~\ref{R3.1.5}, let $Y=\prod_{-\infty}^{+\infty}\mathbb{Z}_2$ be the direct product such that $y_1y_2=(y_1(n)+y_2(n))_{n\in\mathbb{Z}}$ for all $y_1,y_2\in Y$. Let $o\in Y$ such that $o(n)\equiv0$ for all $n\in\mathbb{Z}$, and, set $0^\prime=1$ and $1^\prime=0$. Let $X=Y\times\mathbb{Z}_2$ and $\delta=0$ or $1$. Then $X$ and $Y$ both are totally disconnected compact metric spaces such that $\pi\colon(y,\delta)\mapsto y$ from $X$ onto $Y$ is open 2-to-1 continuous.
Given $i\in\mathbb{Z}$ we define the dual homeomorphisms on $Y$ and $X$, respectively, as follows:
$$
\theta_i\colon Y\rightarrow Y,\quad y\mapsto y_i^\prime,\quad \textrm{where }y_i^\prime(n)=y(n)\textrm{ if }n\not=i\textrm{ and }={y(n)}^\prime \textrm{ if }n=i;
$$
and
$$
\theta_i\colon X\rightarrow X,\quad (y,\delta)\mapsto(y_i^\prime, \delta+y(i)).\footnote{If $\theta_i\colon(y,\delta)\mapsto(y_i^\prime, \delta+y(i-1)+y(i+1))$ for each $i\in\mathbb{Z}$, then we return to McMahon's case. Here our definition simplifies the proof of $\theta_i\circ\theta_j=\theta_j\circ\theta_i$.}
$$
Clearly, for all $i,j\in\mathbb{Z}$, it holds that $\theta_j\circ\theta_i=\theta_i\circ\theta_j$, $\theta_i^2|Y=\textrm{id}$ and $\theta_i^4|X=\textrm{id}$.

Let $G=\bigoplus_{i\in\mathbb{Z}}H_i$, where $H_i=\mathbb{Z}_4$.
Now we may define actions of $G$ on $Y$ and $X$ as follows: for all
$$
t=(t(i))_{i\in\mathbb{Z}}=(\dotsc,0,0,t(i_1),\dotsc,t(i_n),0,0,\dotsc)\in G,
$$
set
$$
t|Y=\theta_{i_1}^{t(i_1)}\circ\dotsm\circ\theta_{i_n}^{t(i_n)}\colon Y\rightarrow Y\quad \textrm{and}\quad t|X=\theta_{i_1}^{t(i_1)}\circ\dotsm\circ\theta_{i_n}^{t(i_n)}\colon X\rightarrow X.
$$
Then $(G,Y)$ is a minimal equicontinuous flow and $\pi\colon (G,X)\rightarrow(G,Y)$ is a 2-to-1 equicontinuous extension. Next we shall prove that $(G,X)$ is a minimal distal non-equicontinuous flow. Since $\pi$ is distal and $Y$ is minimal distal, $(G,X)$ is distal and pointwise a.p having at most two minimal subsets. By $\theta_i^2(o,0)=(o,1)$, it follows that $(G,X)$ is minimal distal. For each integer $j\ge1$ take a point $y_j\in Y$ such that
$$
y_j(n)=0\textrm{ for }|n|\le j\quad \textrm{and}\quad y_j(n)=1\textrm{ for }|n|>j.
$$
Then $(y_j^\prime,1)\to(o,1)$, $(y_j,0)\to(o,0)$ in $X$ and $t_j((y_j^\prime,1),(y_j,0))\to((o,0),(o,0))$ in $X\times X$ as $j\to\infty$, where $t_j=(\dotsc,0,0,1,0,0,\dotsc)\in G$ with $1$ is at the $j$-coordinate. Hence $(o,1)$ is regionally proximal to $(o,0)$ under $(G,X)$. This shows that $(G,X)$ is not equicontinuous (so not locally a.p and not almost automorphic).
\end{exa}

It is well known that distality alone does not imply equicontinuity. Therefore, for equicontinuity, none of the conditions\,---\,$\dim X=0$, $G$ compactly generated and $(G,X)$ distal, of Theorem~\ref{R3.1.1} is surplus.

\begin{srem}
$(G,X)$ in Example~\ref{R3.1.6} has no regularly a.p points. In addition, $(G,Y)$ in Example~\ref{R3.1.6} is regularly a.p (see Proof of Theorem~\ref{R3.1.3}). So the regularly a.p cannot be lifted by finite-to-one covering maps if no restriction to the phase group $G$.
\end{srem}

\subsection{Weakly rigid compact $\mathbb{Z}$-flows}\label{secR3.2}%%%
Let $X$ be a compact Hausdorff space, not necessarily metric, and $f\colon X\rightarrow X$ a self homeomorphism of $X$ in this subsection.
\begin{sse}\label{R3.2.1}
Following \cite{GM} we say that
\begin{enumerate}[(i)]
\item $(f,X)$ is \textit{weakly rigid} if for every $\varepsilon\in\mathscr{U}$ and points $x_1, \dotsc, x_n\in X$ there exists $n\in\mathbb{Z}$, $n\not=0$, such that $(f^nx_i,x_i)\in \varepsilon$ for $i=1,\dotsc,n$.
\item $(f,X)$ is \textit{rigid} (w.r.t. a net $n_k\in\mathbb{Z}$ with $n_k\nearrow\infty$) if $f^{n_k}x\to x$ for all $x\in X$.
\item $(f,X)$ is \textit{uniformly rigid} (w.r.t. a net $n_k\nearrow\infty$) if $f^{n_k}x\to x$ uniformly for all $x\in X$.
\end{enumerate}
Clearly, (iii) $\Rightarrow$ (ii) $\Rightarrow$ (i) but (i) $\not\Rightarrow$ (ii) $\not\Rightarrow$ (iii) in general (cf.~\cite{GM}).
\end{sse}

\begin{sthm}[{cf.~\cite[Prop.~6.7]{GM} for $(2)\Leftrightarrow(5)$ and \cite[Cor.~8.4]{MR} for $(2)\Leftrightarrow(7)$, in the case $(f,X)$ minimal metric}]\label{R3.2.2}
Let $(f,X)$ be such that $\dim X=0$. Then the following are pairwise equivalent:
\begin{enumerate}[(1)]
\item $(f,X)$ is pointwise product $\Gamma$-recurrent of type I, where $\Gamma=\{0,\pm 1\}$.
\item $(f,X)$ is weakly rigid.
\item $(f,X)$ is rigid.
\item $(f,X)$ is uniformly rigid.
\item $(f,X)$ is equicontinuous.
\item $(f,X)$ is positively/negatively pointwise product recurrent (i.e. for every $(x_1,x_2)\in X\times X$ there is a net $n_k\to+\infty$/$n_k\to-\infty$ such that $(f^{n_k}x_1,f^{n_k}x_2)\to(x_1,x_2)$).
\item $(f,X)$ is regularly a.p.
\end{enumerate}
\end{sthm}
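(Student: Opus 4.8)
The plan is to let the group machinery do the work on the two ``endpoints'' $(1)$ and $(5)$, and then reduce everything else to a single topological lemma about one-sided recurrence. Since $G=\mathbb{Z}$ is compactly generated by $\Gamma=\{0,\pm1\}$ and $\dim X=0$, Theorem~\ref{R3.1.1} applies verbatim and gives $(1)\Leftrightarrow(5)$ (pointwise product $\Gamma$-recurrence of type~I $\Leftrightarrow$ distal $\Leftrightarrow$ equicontinuous). First I would record the shape of the $\Gamma$-cones: for $\Gamma=\{0,\pm1\}$ one has $\K(n)=\{1,\dots,2|n|-1\}$ when $n>0$ and its negative when $n<0$, so (recall $0\notin\C$ for discrete $G$ by \ref{R2.1}b) every $\Gamma$-cone contains $\mathbb{Z}_{>0}$ or $\mathbb{Z}_{<0}$. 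Consequently a point is $\Gamma$-recurrent of type~I \emph{iff} it returns to each of its clopen neighbourhoods at arbitrarily large positive \emph{and} at arbitrarily large negative times.

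For the ``upper'' conditions I would use the enveloping semigroup $E(X)=\overline{\{f^{n}\}}\subseteq X^X$. Assuming $(5)$, $E(X)$ is a compact (hence monothetic) topological group, and since $\dim X=0$ it is $0$-dimensional. Uniform continuity of $E(X)\times X\to X$ together with the Hewitt--Ross theorem (exactly as in the proof of Theorem~\ref{R3.1.3}) yields, for each $\alpha\in\mathscr{U}$, a clopen normal finite-index subgroup $N\le E(X)$ with $(px,x)\in\alpha$ for all $p\in N$, $x\in X$; intersecting with $\{f^{n}\}$ gives a subgroup $n_0\mathbb{Z}$ witnessing uniform regular almost periodicity, i.e. $(7)$. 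Monothety gives $\mathrm{id}_X\in\overline{\{f^{n}:n\ge1\}}$ uniformly, which is $(4)$; and $(4)\Rightarrow(3)\Rightarrow(2)$ and $(3)\Rightarrow(6)$ are immediate from \ref{R3.2.1}. Conversely $(7)\Rightarrow(5)$ is the standard finite-quotient argument: if $n_0\mathbb{Z}$ moves every point inside $\alpha$, equicontinuity follows from the uniform continuity of the finitely many maps $f^{0},\dots,f^{n_0-1}$.

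It remains to close the cycle via $(2)\Rightarrow(1)$ and $(6)\Rightarrow(1)$, and this is where the work lies. The key lemma I would isolate is: \emph{on a compact $0$-dimensional space, if every point of a flow is positively recurrent then every point is negatively recurrent} (and symmetrically). Granting it, both implications are quick. For $(6)$, positive (or negative) product recurrence says precisely that every point of the compact $0$-dimensional space $X\times X$ is positively (resp.\ negatively) recurrent under $f\times f$; the lemma supplies the other direction, and by the cone description $(f\times f,X\times X)$ is then $\Gamma$-recurrent of type~I, i.e.\ $(1)$. For $(2)$, weak rigidity of $(f,X)$ means $\mathrm{id}_X\in\overline{\{f^{n}:n\neq0\}}$ in $X^X$ and is inherited by $(f\times f,X\times X)$ (apply the definition to the finitely many coordinates of the chosen points and to $\varepsilon\times\varepsilon$); passing to a subnet $n_i\neq0$ of one fixed sign with $f^{n_i}\times f^{n_i}\to\mathrm{id}_{X\times X}$ makes every point of $X\times X$ positively (or negatively) recurrent, and the lemma again upgrades this to type~I.

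The main obstacle is therefore the key lemma, and here $0$-dimensionality is indispensable. The argument I plan is a first-return construction: fix a clopen $U$ and, using positive recurrence, define the first-return time $r\colon U\to\mathbb{Z}_{\ge1}$. Since $U$ is clopen, each level set $U_k=\{y\in U:r(y)=k\}$ is clopen, these cover $U$, so by compactness $r$ is bounded and the image $T_U(U)=\bigcup_{k}f^{k}(U_k)\subseteq U$ (a finite union) is clopen. If some $z_0\in U$ had no backward return to $U$, then $z_0$ would lie in the clopen set $V=U\setminus T_U(U)$; but positive recurrence forces $z_0$ back into its neighbourhood $V$ at some time $s\ge1$, and then $f^{s}z_0\in V$ while $f^{-s}(f^{s}z_0)=z_0\in U$ exhibits a backward return of $f^{s}z_0$ to $U$, contradicting $f^{s}z_0\in V$. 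Hence every point of $U$ has a backward return to $U$, and iterating yields negative recurrence. The delicate points are the boundedness of $r$ and the clopenness of $V$; both hinge on clopenness rather than mere openness, consistent with the failure of ``$(1)\Rightarrow(2)$'' once $\dim X\ge1$ noted after Theorem~\ref{R3.1.1}.
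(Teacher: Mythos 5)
Your proof is correct, but it closes the hard part of the cycle by a genuinely different route than the paper. The paper never proves a ``one-sided implies two-sided recurrence'' lemma: for $(6)\Rightarrow(5)$ it codes $X$ symbolically, sending $x\mapsto(1_Vf^nx)_{n\in\mathbb{Z}}$ for each clopen $V$, shows each subshift factor $(\sigma_V,Y_V)$ is distal (a non-distal positively/negatively product recurrent system would contain an asymptotic pair, via \cite[Prop.~5.10]{AD} or \cite[Lem.~6.6]{GM}), applies Theorem~\ref{R3.1.1} to each factor, and then embeds $X$ into $\prod_VY_V$; the implication $(2)\Rightarrow(5)$ is handled by ``a slight modification'' of the same coding argument, and $(1)\Rightarrow(6)$ is read off from the cone description. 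You instead isolate the first-return-map lemma (on a compact $0$-dimensional space, pointwise forward recurrence forces the first-return map $T_U$ of every clopen $U$ to be surjective, hence pointwise backward recurrence), use it to prove $(2)\Rightarrow(1)$ and $(6)\Rightarrow(1)$ directly, and then invoke Theorem~\ref{R3.1.1} only once. Your route is more elementary and self-contained: it avoids the symbolic factors, the product embedding, and the detour through proximal/asymptotic pairs, and it makes the role of zero-dimensionality (clopenness of the level sets of the return time and of $U\setminus T_U(U)$) completely explicit. What it costs is that the lemma is specific to $\mathbb{Z}$-actions, whereas the paper's coding argument is the template it reuses elsewhere. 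Two small expository points: in the contradiction step of your lemma you should state explicitly that a point of $U$ \emph{with} a backward return to $U$ lies in $T_U(U)$ (take $k\ge1$ minimal with $f^{-k}w\in U$; then $w=T_U(f^{-k}w)$) --- this is the converse of the containment you actually wrote down, though both are immediate; and in $(5)\Rightarrow(7)$ your uniform Hewitt--Ross argument is slightly stronger than the paper's citation of Theorem~\ref{R3.1.3} plus equicontinuity, but both deliver uniform regular almost periodicity.
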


\begin{proof}
$(1)\Rightarrow(6)$: By the fact that the only $\Gamma$-cones are $\mathbb{N}$ and $-\mathbb{N}$ in $(\mathbb{Z},+)$.

$(6)\Rightarrow(5)$: Assume (6). Let $V$ be a clopen non-empty subset of $X$. Let $\pi_V\colon X\rightarrow\{0,1\}^\mathbb{Z}$ be defined by $x\mapsto(1_V f^nx)_{n\in\mathbb{Z}}$. Since $V$ is clopen, $\pi_V$ is continuous. Let $Y_V=\pi_VX$ and $\sigma_V\colon Y_V\rightarrow Y_V$ the canonical shift map. Then $\pi_V\circ f=\sigma_V\circ\pi_V$. So $(\sigma_V,Y_V)$ is positively/negatively pointwise product recurrent. This implies that $(\sigma_V,Y_V)$ is distal. For if otherwise, then by \cite[Proposition~5.10]{AD} there were a pair $(y_1,y_2)\in Y_V\times Y_V$ such that $y_1$ is positively/negatively proximal to $y_2$ and $y_1\not=y_2$; further, we can find a positively/negatively asymptotic pair in $Y$ (or by~\cite[Lemma~6.6]{GM}). Thus by Theorem~\ref{R3.1.1}, $(\sigma_V,Y_V)$ is equicontinuous. Let $\mathscr{V}$ be the collection of clopen subsets of $X$. Then $(\sigma,\prod_{V\in\mathscr{V}}Y_V)$ is equicontinuous. Define $\pi\colon X\rightarrow\prod_{V\in\mathscr{V}}Y_V$ by $x\mapsto(\pi_Vx)_{V\in\mathscr{V}}$. Since $X$ is 0-dimensional, $\pi$ is obviously continuous 1-1 with $\pi\circ f=\sigma\circ\pi$. As $(\sigma,\pi X)$ is equicontinuous, it follows that $(f,X)$ is equicontinuous.

$(5)\Rightarrow(1)$: By Theorem~\ref{R3.1.1}.

$(5)\Rightarrow(4)\Rightarrow(3)\Rightarrow(2)$: By definitions.

$(2)\Rightarrow(5)$: By a slight modification of the above proof of $(6)\Rightarrow(5)$.

$(5)\Leftrightarrow(7)$: Assume (5). By Theorem~\ref{R3.1.3}, $(f,X)$ is pointwise regularly a.p. Further by equicontinuity, $(f,X)$ is regularly a.p. Finally, (7)$\Rightarrow$(5) is obvious.

The proof is thus completed.
\end{proof}

It should be noticed that in the above proof of $(6)\Rightarrow(5)$, $\{(\sigma,Y_V)\,|\,V\in\mathscr{V}\}$ need not be a directed system, since we did not define the connecting homomorphism $\phi_V^U\colon Y_U\rightarrow Y_V$ for $U,V\in\mathscr{V}$ with $U\subset V$ or $V\subset U$.

\subsection{Pointwise periodic homeomorphisms}\label{secR3.3}
Let $f\colon X\rightarrow X$ be a homeomorphism. If each point of $X$ is periodic under $f$, then $(f,X)$ is said to be pointwise periodic. If there is some positive integer $\kappa$ such that $f^\kappa=\textrm{id}_X$, then $(f,X)$ is said to be periodic. The least integer $p$ greater than $0$ such that $f^p(x)=x$ for all $x\in X$ is called the \textit{period} of $f$.

The following example shows that the pointwise periodic does not imply that $(f,X)$ is periodic in general.

\begin{exa}\label{R3.3.1}
Let $r_n, n=1,2,\dotsc$ be a sequence of rational numbers with $0<r_n<1$ such that $r_n\downarrow0$ as $n\to\infty$. Let $\mathbb{S}=\{z\,|\,z\in\mathbb{C},|z|=1\}$ and
$\mathbb{S}_n=\{z\,|\,z\in\mathbb{C}, |z|=1-r_n\}$ for $n=1,2,\dotsc$ are concentric circles in the complex plane. Define $f_n\colon\mathbb{S}_n\rightarrow\mathbb{S}_n$ by $z\mapsto e^{2\pi ir_n}z$, for $n=1,2,\dotsc$. Let $X=\mathbb{S}\cup(\bigcup_{n=1}^\infty\mathbb{S}_n)$ and define $f\colon X\rightarrow X$ by $f|\mathbb{S}_n=f_n$ and $f|\mathbb{S}=\textrm{id}_{\mathbb{S}}$. Clearly, the $\mathbb{Z}$-flow induced by the pointwise periodic cascade $(f,X)$ is pointwise (regularly almost) periodic but it is not equicontinuous at every point of $\mathbb{S}$. In fact, in view of Lemma~\ref{R2.4}, $(f,X)$ is not a locally weakly a.p $\mathbb{Z}$-flow.
\end{exa}

However, if the phase space $X$ is a locally euclidean space, then a pointwise periodic uniform transformation is periodic:

\begin{sthm}[{Montgomery~\cite{Mo}}]\label{R3.3.2}
If $f$ is a pointwise periodic uniform homeomorphism of a boundaryless $n$-manifold $M$ into itself, then $(f,M)$ is periodic.
\end{sthm}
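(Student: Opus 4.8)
The plan is to combine the Baire property of manifolds with the rigidity of finite-order homeomorphisms furnished by invariance of domain and Newman's theorem. Assuming $M$ connected (as is built into the usual meaning of an $n$-manifold), set $E_n=\{x\in M\,|\,f^nx=x\}=\mathrm{Fix}(f^n)$ for $n\ge1$. Each $E_n$ is closed because $f^n$ is continuous, and pointwise periodicity gives $M=\bigcup_{n\ge1}E_n$. Since a manifold is locally compact Hausdorff, hence a Baire space, some $E_N$ has nonempty interior, so there is a nonempty open set $U_0$ on which $f^N=\mathrm{id}_M$. The uniform continuity of $f$ and $f^{-1}$ (the hypothesis that $f$ is a \emph{uniform} homeomorphism) is what will keep the ensuing local estimates uniform when $M$ is not compact.

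First I would upgrade this local identity to a global one by proving that $E_N$ is \emph{open}: then $E_N$ is a nonempty clopen subset of the connected space $M$, whence $E_N=M$ and $f^N=\mathrm{id}_M$, i.e. $(f,M)$ is periodic of period dividing $N$. To show $E_N$ is open, fix $x_0\in E_N$; its $f$-orbit is finite and $g:=f^N$ is a homeomorphism fixing $x_0$ that restricts to the identity on $U_0$. The manifold structure now enters decisively: through invariance of domain the local action of $g$ on a Euclidean chart about $x_0$ preserves dimension, and Newman's theorem (via Smith theory applied to the prime-order subactions $g^{m/p}$ of a finite-order homeomorphism) forbids a nontrivial such map from fixing a nonempty open set. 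The target conclusion is that $g$ must already be the identity on a neighborhood of $x_0$, so that $x_0\in\mathrm{int}\,E_N$ and no point of $E_N$ lies on $\partial(\mathrm{int}\,E_N)$.

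I expect the delicate point, and the main obstacle, to be precisely the passage from the merely \emph{pointwise} periodic map $g=f^N$ to a genuine finite-order action on an invariant chart to which Smith theory applies; this is where one must rule out the period jumping as $x\to\partial U_0$, and it is exactly the step that fails for the circle-union phase space of Example~\ref{R3.3.1}, which is not a manifold. It is here, and only here, that the hypothesis $\dim M=n$ together with boundarylessness (so that the charts are genuinely Euclidean and invariance of domain is available) is indispensable. Once $E_N$ is shown to be open and hence $f^N=\mathrm{id}_M$ is established, $(f,M)$ is periodic and the argument is complete.
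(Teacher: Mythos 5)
The paper offers no proof of this statement: it is quoted from Montgomery~\cite{Mo} and used as a black box, so there is no in-paper argument to compare yours against; the comparison has to be with Montgomery's classical proof, whose outline (Baire category to produce $N$ with $\mathrm{int}\,E_N\neq\emptyset$, then Newman's theorem to propagate the identity) you have correctly identified. The Baire step is fine as written, and you are right that connectedness must be read into the statement: a disjoint union of circles with the $n$-th rotated by $2\pi/n$ is a pointwise periodic uniform homeomorphism of a boundaryless $1$-manifold that is not periodic.

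The genuine gap is exactly the step you flag as ``the delicate point'': you never prove that $E_N=\mathrm{Fix}(f^N)$ is open, and the tool you invoke cannot be applied in the form you invoke it. Newman's theorem concerns homeomorphisms of \emph{finite order} --- a nontrivial periodic homeomorphism of a connected manifold cannot fix a nonempty open set --- whereas $g=f^N$ is only known to be \emph{pointwise} periodic; that $g$ has finite order is the conclusion of the whole theorem, so applying Newman to $g$ at a boundary point of $\mathrm{int}\,E_N$ is circular. (Indeed, if you knew $g$ had finite order, a single application of Newman on all of $M$, using that $g$ fixes the open set $U_0$, would finish the proof outright; the fact that your argument still needs a separate openness discussion is a symptom of the circularity.) The standard way to close this gap --- essentially Montgomery's --- is a two-stage Baire argument: set $U=\bigcup_n\mathrm{int}\,E_n$, an open dense invariant set on whose components the period function is locally bounded, show via a local form of Newman's theorem that the period is actually constant on each component of $U$ (here one genuinely has a finite-order action to which Newman applies), and then, assuming $F=M\setminus U\neq\emptyset$, apply Baire a second time to the locally compact invariant set $F$ to produce a point near which the periods are simultaneously bounded and unbounded, a contradiction. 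Without this (or some other device converting pointwise periodicity into finite order on an invariant connected open set), your argument does not go through. Finally, the hypothesis that $f$ is a \emph{uniform} homeomorphism, which you promise will ``keep the local estimates uniform,'' is never actually used anywhere in your sketch.
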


A sharp case beyond Montgomery's Theorem is a compact metric $\mathbb{Z}$-flow with zero-dimensional phase space as follows:

\begin{sthm}\label{R3.3.3}
Let $X$ be a compact space with $\dim X=0$. If $f$ is a pointwise periodic homeomorphism of $X$ into itself then $(f,X)$ is regularly a.p but it is not necessarily periodic.
\end{sthm}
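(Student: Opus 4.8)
The plan is to treat $(f,X)$ as a $\mathbb{Z}$-flow whose phase group $\mathbb{Z}$ is compactly generated (by $\Gamma=\{0,\pm1\}$) and whose phase space is compact with $\dim X=0$, so that the equivalences of Theorem~\ref{R3.2.2} become available. For the positive assertion I would prove that $(f,X)$ is \emph{weakly rigid} in the sense of \ref{R3.2.1}(i) and then simply read off regular almost periodicity from the equivalence $(2)\Leftrightarrow(7)$ of Theorem~\ref{R3.2.2}.

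The verification of weak rigidity is immediate from pointwise periodicity. Given $\varepsilon\in\mathscr{U}$ and finitely many points $x_1,\dots,x_k\in X$, each $x_i$ has a finite $f$-period $p_i$; putting $N=\mathrm{lcm}(p_1,\dots,p_k)\neq0$ we get $f^N x_i=x_i$, so $(f^N x_i,x_i)\in\varepsilon$ for every $i$, the pairs lying on the diagonal. Hence $(f,X)$ satisfies condition~(2) of Theorem~\ref{R3.2.2}, and therefore condition~(7): $(f,X)$ is regularly a.p. in the uniform sense of Def.~\ref{R1.5}c. Alternatively, one can first observe that $(f,X)$ is \emph{distal}: for $x\neq y$ the orbit $\{(f^n x,f^n y)\mid n\in\mathbb{Z}\}$ is finite, hence closed, and it meets the diagonal only if $f^n x=f^n y$ for some $n$, i.e. $x=y$; thus $\Delta_X\cap\overline{\mathbb{Z}(x,y)}=\emptyset$ and $(f,X)$ is distal by Def.~\ref{R1.6}. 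Since $\dim X=0$ and $\mathbb{Z}$ is compactly generated, Theorem~\ref{R3.1.1} then makes $(f,X)$ equicontinuous, which again yields regular almost periodicity through Theorem~\ref{R3.2.2}.

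For the assertion that $(f,X)$ need not be periodic, I would exhibit a compact metric $0$-dimensional model with unbounded orders. Take $X=\mathbb{N}\cup\{\infty\}$, the one-point compactification of the discrete space $\mathbb{N}$ (a $0$-dimensional compactum homeomorphic to $\{0\}\cup\{1/m\mid m\geq1\}$), partition $\mathbb{N}$ into consecutive finite blocks $B_n$ with $|B_n|=n$, let $f$ act as an $n$-cycle on each $B_n$, and set $f(\infty)=\infty$. Then every point is periodic, so $(f,X)$ is pointwise periodic, while the periods are unbounded, so no $\kappa$ satisfies $f^\kappa=\mathrm{id}_X$; thus $(f,X)$ is regularly a.p. by the first half but not periodic. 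The only point requiring care is continuity of $f$ and $f^{-1}$ at $\infty$: since $f$ preserves each block, every tail neighborhood $\{\infty\}\cup\bigcup_{n\geq n_0}B_n$ is $f$-invariant and is carried into any prescribed neighborhood of $\infty=f(\infty)$, so $f$ is a homeomorphism.

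There is no serious obstacle once Theorems~\ref{R3.1.1} and~\ref{R3.2.2} are in hand; the content of the positive half is just the remark that pointwise periodicity forces weak rigidity (a common period works simultaneously for any finite set of points), and of the negative half the construction of a $0$-dimensional model with arbitrarily long cycles. It is worth emphasizing that zero-dimensionality is genuinely needed for the non-periodic example: by Montgomery's Theorem~\ref{R3.3.2} a pointwise periodic uniform homeomorphism of a boundaryless manifold is automatically periodic, and on the $1$-dimensional space of Example~\ref{R3.3.1} pointwise periodicity even fails to imply equicontinuity, so neither the manifold setting nor higher dimension would support the stated dichotomy.
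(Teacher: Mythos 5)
Your proposal is correct. The positive half, in your ``alternative'' form, is exactly the paper's argument: $(f,X)$ is distal because each orbit of a pair $(x,y)$ is finite (hence closed) and off the diagonal for $x\neq y$, so Theorem~\ref{R3.1.1} gives equicontinuity, and regular almost periodicity then follows (the paper quotes compactness and Lemma~\ref{R1.5}d at this last step, which is what the equivalence $(5)\Leftrightarrow(7)$ of Theorem~\ref{R3.2.2} packages for you; your primary route through weak rigidity and $(2)\Leftrightarrow(7)$ is just a reshuffling of the same chain, since $(2)\Rightarrow(5)$ in Theorem~\ref{R3.2.2} is itself proved via Theorem~\ref{R3.1.1}). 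Where you genuinely diverge is the non-periodicity example: the paper uses $X=\prod_{n=2}^\infty\mathbb{Z}/n\mathbb{Z}$ with the map that adds $1$ to the digit after the first nonzero entry (a counterexample credited to the referee), whereas you take the one-point compactification of $\mathbb{N}$ partitioned into blocks $B_n$ of size $n$ on which $f$ acts as an $n$-cycle. Both are valid compact metric $0$-dimensional pointwise periodic homeomorphisms of unbounded order; yours is arguably more elementary and its continuity check (invariance of tail neighborhoods of $\infty$) is more transparent, while the paper's sits inside a compact group and so connects more directly with the odometer-type examples elsewhere in the literature. Your closing remarks on Montgomery's Theorem~\ref{R3.3.2} and Example~\ref{R3.3.1} correctly locate why zero-dimensionality is the right hypothesis.
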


\begin{proof}
1). As a $\mathbb{Z}$-flow $(f,X)$ is distal so $(f,X)$ is (uniformly) equicontinuous by Theorem~\ref{R3.1.1}. Further by compactness of $X$ and Lemma~\ref{R1.5}d, it follows easily that $(f,X)$ is a regularly a.p cascade.

2). For the second part, let's consider a counterexample (due to the referee) as follows: Let $X=\prod_{n=2}^\infty\mathbb{Z}/n\mathbb{Z}$, and, let $f\colon X\rightarrow X$ be defined in the ways: for every $x\in X$, if $x=(0,0,0,\dotsc)$ then $fx=x$, otherwise $fx$ is given by adding $1$ to the digit after the first nonzero entry of $x$. Then $(f,X)$ is pointwise periodic but has infinite order.
The proof is completed.
\end{proof}

Therefore, if $(f,X)$ is a pointwise periodic cascade with $X$ a compact boundaryless $n$-manifold or $X$ a 0-dimensional compact space, then $f$ has the topological entropy zero without using the variational principle of entropy.

Notice that as shown by Example~\ref{R3.3.1} and Theorem~\ref{R3.3.3}, if $X$ is not a manifold, then a pointwise periodic homeomorphism of $X$ need not be periodic.

%%%%%%%%%%%%%%%%%%%%%%%%%%%%%%%%%%%%%%%%%%%%%%%%%%%%%%%%%%%%%%%%
\section{The case when $G$ is a finitely generated group}\label{secR4}%%%

Let $G$ be discrete and finitely generated by $\Gamma$ such that $e\in\Gamma=\Gamma^{-1}$, and let $(G,X)$ be a flow. We shall show that the $\Gamma$-recurrence of type I (Def.~\ref{R2.2}a) is consistent with \cite{AGW} in this case. See Lemma~\ref{R4.4} below.

\begin{se}\label{R4.1}
Let $B_r=\{g\in G\,|\,g\not=e, |g|\le r\}$, for all $r\ge1$. For $g\in G$ with $g\not=e$, write $K(g)=B_{|g|-1}\cdot g$.
\end{se}

\begin{se}\label{R4.2}
A subset $C$ of $G$ is an AGW-\textit{cone}, namely, a cone in the sense of Auslander, Glasner and Weiss~\cite{AGW}, if there is a sequence $g_n\in G$ with $|g_n|\to\infty$ such that for each $r\ge1$ there exists $n_r$
such that $B_r\cap K(g_n)$ is independent of $n$ for all $n\ge n_r$, and, $C=\lim_{n\to\infty}K(g_n)$. Since $G$ is discrete here, so $c\in C$ iff $c\in K(g_n)$ as $n$ sufficiently large. Moreover, $e\notin C$ and by the proof of Lemma~\ref{R2.1}c, it is easy to see that $C$ is thick in $G$.
\end{se}

\begin{se}[{cf.~\cite[Def.~1.6]{AGW}}]\label{R4.3}
We say that $x\in X$ is AGW-\textit{recurrent}, if $Cx\cap U\not=\emptyset$ for every $U\in\mathfrak{N}_x$ and every AGW-cone $C$ in $G$.
\end{se}

\begin{lem}\label{R4.4}
A point $x$ is AGW-recurrent under $(G,X)$ if and only if $x$ is $\Gamma$-recurrent of type I in the sense of Def.~\ref{R2.2}a.
\end{lem}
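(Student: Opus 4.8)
The plan is to prove the equivalence not by arguing directly about orbits, but by comparing the two \emph{families of cones} against which recurrence is tested. Since $x$ being $\Gamma$-recurrent of type~I (Def.~\ref{R2.2}a) means $\C x\cap U\neq\emptyset$ for \emph{all} $\Gamma$-cones $\C$ and all $U\in\mathfrak{N}_x$, while $x$ being AGW-recurrent (Def.~\ref{R4.3}) is the same condition ranging over all AGW-cones, it suffices to establish two set-theoretic facts: (A) every AGW-cone is a $\Gamma$-cone, which gives ``$\Gamma$-recurrent $\Rightarrow$ AGW-recurrent''; and (B) every $\Gamma$-cone \emph{contains} an AGW-cone, which gives ``AGW-recurrent $\Rightarrow$ $\Gamma$-recurrent'', because recurrence with respect to a cone $\C$ is inherited by any subcone $C\subseteq\C$ (as $Cx\cap U\subseteq\C x\cap U$). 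Throughout I will use that $G$ discrete and finitely generated is countable with every ball $B_r$ finite, that $\Gamma^{|g|-1}=B_{|g|-1}\cup\{e\}$ so $\K(g)=K(g)\cup\{g\}$, and that under $\rightharpoonup$ a net in the discrete space $G$ converges exactly to its $\limsup$ (Lemma~\ref{R1.1}). The one recurring observation is that if $|g_i|\nearrow\infty$ then for each fixed $y$ eventually $|g_i|>|y|$, so the tips $g_i$ never occur cofinally; hence $\limsup_i\K(g_i)=\limsup_iK(g_i)$ and the distinction between $\K$ and $K$ is invisible to $\rightharpoonup$.

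For (A), I would start from an AGW-cone $C$ realized by a sequence $g_n$ with $|g_n|\to\infty$, the stabilization property of Def.~\ref{R4.2}, and $C=\lim_nK(g_n)$. Passing to a subsequence I may assume $|g_n|$ strictly increasing (this preserves the limit). Viewing the sequence as a net, Lemma~\ref{R1.1} gives $\K(g_n)\rightharpoonup\limsup_n\K(g_n)$; by the tip observation this $\limsup$ equals $\limsup_nK(g_n)$, and the stabilization forces $\liminf_nK(g_n)=\limsup_nK(g_n)=C$. Hence $\K(g_n)\rightharpoonup C$ with $|g_n|\nearrow\infty$, so $C$ is a $\Gamma$-cone.

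For (B), I would take a $\Gamma$-cone $\C=\limsup_iK(g_i)$ with $|g_i|\nearrow\infty$ over a directed set $\Lambda$ and consider the map $i\mapsto\bigl(K(g_i)\cap B_r\bigr)_{r\ge1}$ into the compact space $\prod_{r}2^{B_r}$, each factor $2^{B_r}$ being finite. A convergent subnet has limit $(W_r)_r$ with $W_r=W_{r+1}\cap B_r$ (coherence) and $W_r\subseteq\C\cap B_r$, the latter because any value attained cofinally along a subnet is attained cofinally in the original net. From this subnet I then extract an honest sequence $h_k$ with $|h_k|\nearrow\infty$ and $K(h_k)\cap B_k=W_k$; coherence makes $K(h_k)\cap B_r$ stabilize to $W_r$ for every $r$, so $C:=\bigcup_rW_r=\lim_kK(h_k)$ is an AGW-cone, and $C\subseteq\C$ by construction. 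That $C$ is thick (hence nonempty, so a legitimate AGW-cone) follows exactly as in Lemma~\ref{R2.1}c.

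The \emph{main obstacle} is step (B): converting a net-defined $\Gamma$-cone into a sequence-defined AGW-cone carrying the stabilization property of Def.~\ref{R4.2}, while certifying that the cone produced lies inside $\C$. The finiteness of the balls $B_r$ together with compactness of $\prod_r2^{B_r}$ is what makes the diagonal extraction run, and the delicate point is verifying $W_r\subseteq\C\cap B_r$ so that $C\subseteq\C$. By contrast, the $\K$-versus-$K$ bookkeeping in (A) is routine once the ``tips are eventually long'' remark is in place.
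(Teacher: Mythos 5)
Your proposal is correct and follows essentially the same route as the paper: both directions are reduced to comparing the two families of cones, with the paper's part 1) being your step (B) (a diagonal extraction over the finite balls $B_r$ turns the net defining a $\Gamma$-cone $\C$ into a sequence defining an AGW-cone $C\subseteq\C$) and the paper's part 2) being your step (A) (the paper shows the $\Gamma$-cone built from the AGW-defining sequence is contained in $C$, while you show equality; either inclusion suffices). The only differences are cosmetic — your compactness-of-$\prod_r 2^{B_r}$ phrasing versus the paper's explicit diagonal process, and your preliminary remark that the tip $g_i$ is invisible to $\limsup$.
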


\begin{proof}
1). Let $\C$ be a $\Gamma$-cone in $G$ with $\K(g_i)=\Gamma^{|g_i|-1}\cdot g_i\rightharpoonup\C$, where $\{g_i\,|\,i\in\Lambda\}$ is a net in $G$ such that $|g_i|\nearrow\infty$.
For all integer $k\ge1$, there is an $i_k\in\Lambda$ such that $|g_i|\ge k$ for all $i\ge i_k$ and $|g_{i_k}|<|g_{i_{k+1}}|$.
Clearly $|g_{i_k}|\to\infty$ as $k\to\infty$. Moreover, the sequence $\{g_{i_k}\}_{k=1}^\infty$ is a subnet of $\{g_i\}$. Indeed, for $i^\prime\in\Lambda$, there is an integer $k\ge1$ with $|g_{i^\prime}|<k\le |g_{i_{k^\prime}}|$ for all $k^\prime>k$ so $i^\prime\le i_{k^\prime}$ for all $k^\prime>k$.
Therefore, using $\{i_k\}$ in place of $\Lambda$ if necessary, we can assume $\{g_i\}_{i=1}^\infty$ is a sequence in $G$ with $|g_i|\nearrow\infty$ as $i\to\infty$.

Each $B_r$ is finite, so we may choose a subsequence $\{i_k\}$ from $\Lambda$ so that each $K(g_{i_k})\cap B_r$ is eventually constant. Then by a diagonal process we can choose a subsequence (and relabel) $\{i_k\}_{k=1}^\infty$ from $\Lambda$ such that $K(g_{i_k})\to C$ as $k\to\infty$. Then $C$ is an AGW-cone in $G$ such that $C\subseteq\C$ by Lemma~\ref{R1.1}.

2). Let $C$ be a cone in $G$ with $C=\lim_iK(g_i)$ in the sense of Def.~\ref{R4.2}, where $\{g_i\}_{i=1}^\infty$ is a sequence in $G$ such that $|g_i|\to\infty$ and for each $r\ge1$ there exists an $n_r\ge1$ such that $B_r\cap K(g_i)$ is independent of $i$ for all $i\ge n_r$. By induction, we may require $n_r<n_{r+1}$ and $|g_{n_r}|<|g_{n_{r+1}}|$ for $r\ge1$. Then
$$
C={\bigcup}_{r=1}^\infty(B_r\cap K(g_{n_r})).
$$
Let $\K(g_{n_r})\rightharpoonup\C$ in the sense of Def.~\ref{R2.1}b. Let $c\in\C$. Then there exists a subnet $\{n_{r^\prime}\}$ from $\{n_r\}$ and $k_{n_{r^\prime}}\in\K(g_{n_{r^\prime}})$ such that $k_{n_{r^\prime}}\to c$. Since $G$ is discrete and $\{c\}$ is an open neighborhood of $c$ in $G$, we can assume $k_{n_{r^\prime}}=c$ for all $r^\prime$. Since we have for all $n_r$ that $n_{r^\prime}\ge n_r$ as $r^\prime$ sufficiently large, so $c\in C$ and $\C\subseteq C$.

The above discussion of 1) and 2) implies that $x$ is AGW-recurrent iff $x$ is also $\Gamma$-recurrent of type I. The proof is complete.
\end{proof}

\begin{prop}[{cf.~\cite[Thm.~1.8]{AGW}}]\label{5.5}
Let $(G,X)$ be a flow, where $G$ is finitely generated and $X$ is a 0-dimensional space. Then the following conditions are equivalent:
\begin{enumerate}[(i)]
\item $(G,X)$ is pointwise AGW-recurrent.
\item $(G,X)$ is pointwise a.p.
\item The $G$-orbit closure relation $R_o(X)$ is closed.
\end{enumerate}
\end{prop}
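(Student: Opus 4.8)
The plan is to derive this proposition directly from the machinery already in place, treating the finitely generated case as a special instance of the compactly generated theory. First I would observe that since $G$ is finitely generated by $\Gamma$ with $e\in\Gamma=\Gamma^{-1}$, it is discrete, so the finite set $\Gamma$ is compact; hence $G$ is compactly generated by $\Gamma$ in the sense of Def.~\ref{R1.7}. Together with the hypothesis $\dim X=0$, this means Theorem~\ref{R2.6}a applies verbatim to $(G,X)$ with this $\Gamma$.

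The bridge between the two recurrence notions is Lemma~\ref{R4.4}: a point $x$ is AGW-recurrent (Def.~\ref{R4.3}) if and only if it is $\Gamma$-recurrent of type~I (Def.~\ref{R2.2}a). Consequently condition~(i), pointwise AGW-recurrence, is exactly condition~(1) of Theorem~\ref{R2.6}a, i.e. that $(G,X)$ is pointwise $\Gamma$-recurrent of type~I. I would then invoke Theorem~\ref{R2.6}a, which asserts that its conditions (1)\,--\,(9) are pairwise equivalent; in particular the equivalences (1)\,$\Leftrightarrow$\,(3)\,$\Leftrightarrow$\,(5) read: pointwise $\Gamma$-recurrent of type~I $\Leftrightarrow$ pointwise a.p $\Leftrightarrow$ $R_o(X)$ closed. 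Chaining these with the identification of (i) and (1) supplied by Lemma~\ref{R4.4} yields (i)\,$\Leftrightarrow$\,(ii)\,$\Leftrightarrow$\,(iii), which is the assertion.

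I expect essentially no obstacle here, as all the substance is already carried by Theorem~\ref{R2.6}a (whose crux is the implication (1)$\Rightarrow$(8), built on the word-length/cone analysis of Lemma~\ref{R2.1}c and the construction in Lemma~\ref{R2.5}) and by Lemma~\ref{R4.4}. The only point that would require a moment's care is confirming that the AGW-cone formalism of Auslander--Glasner--Weiss (Def.~\ref{R4.2}), phrased through eventually-constant finite truncations $B_r\cap K(g_n)$, genuinely matches the $\rightharpoonup$-limit of $\K(g_i)$ used in Def.~\ref{R2.1}b; but this comparison is precisely the content of Lemma~\ref{R4.4}, so the proposition is a short corollary and no additional argument is needed.
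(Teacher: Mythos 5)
Your proposal is correct and matches the paper's own proof exactly: the paper also deduces the proposition as an immediate special case of Theorem~\ref{R2.6}a, using Lemma~\ref{R4.4} to identify pointwise AGW-recurrence with pointwise $\Gamma$-recurrence of type~I.
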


\begin{proof}
In view of Lemma~\ref{R4.4}, this is a special case of Theorem~\ref{R2.6}a.
\end{proof}

\begin{prop}[{cf.~\cite[Thm.~1]{G44}, \cite[Thm.~I(a)]{ES} and \cite[Thm.~1.4]{F81} for (2) by different approaches}]\label{R4.6}
Suppose $f\colon X\rightarrow X$ is a homeomorphism of $X$, which is thought of as a $\mathbb{Z}$-flow. Let $x\in X$. Then:
\begin{enumerate}[(1)]
\item $x$ is recurrent of type I (cf.~Def.~\ref{R2.2}a) iff $x$ is ``stable in the sense of Poisson'', i.e., there exists a net $\{i_n\}$ in $\mathbb{Z}$ with $i_n\to+\infty$ and a net $\{j_n\}$ in $\mathbb{Z}$ with $j_n\to-\infty$, such that $f^{i_n}x\to x$ and $f^{j_n}x\to x$ simultaneously.
\item If $\kappa\in\mathbb{N}$ and $i_n\in\mathbb{N}$ is a net with $i_n\to\infty$ such that $f^{i_n}x\to x$, then there is a net $k_n\in\kappa\mathbb{N}$ with $k_n\to\infty$ such that $f^{k_n}x\to x$.
\end{enumerate}
\end{prop}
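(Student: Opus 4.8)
The plan is to translate both parts into statements about the positive and negative limit sets of $x$, exploiting the fact that, for $G=\mathbb{Z}$ with a finite symmetric generating set $\Gamma$, every $\Gamma$-cone is, up to a bounded initial segment, a one-sided tail of $\mathbb{Z}$. To make this precise I would first analyze the cones. Write $b=\max\Gamma\ge 1$ and let $|\cdot|=|\cdot|_\Gamma$ be the word length. For all large $k$ the ball $\Gamma^k$ is a genuine integer interval $[-bk,bk]\cap\mathbb{Z}$, so for $g$ with $|g|$ large one has $\K(g)=\Gamma^{|g|-1}g=[\,g-b(|g|-1),\,g+b(|g|-1)\,]\cap\mathbb{Z}$. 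Since $g\le b|g|$ always holds, for $g>0$ the left endpoint of this interval is $\le b$ while the right endpoint tends to $+\infty$; dually for $g<0$. Hence if $\{g_i\}$ is a net with $|g_i|\nearrow\infty$ and cofinally many $g_i>0$, then every integer $c\ge b$ lies in $\K(g_i)$ cofinally, so the limsup $\C$ contains the whole tail $[b,\infty)\cap\mathbb{Z}$; symmetrically a negative tail $(-\infty,-b]\cap\mathbb{Z}$ appears when cofinally many $g_i<0$. As $|g_i|\to\infty$ forces the net to be cofinally positive or cofinally negative, every $\Gamma$-cone contains one of these tails; in the special case $\Gamma=\{0,\pm1\}$ the tails are exactly $\mathbb{N}$ and $-\mathbb{N}$, which are themselves cones.

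For (1) the reverse implication is then immediate. Given Poisson stability, a $\Gamma$-cone $\C$, and $U\in\mathfrak{N}_x$, I take the tail, say $[b,\infty)\cap\mathbb{Z}\subseteq\C$, and use the net $i_n\to+\infty$ with $f^{i_n}x\to x$ to find some $i_n\ge b$ with $f^{i_n}x\in U$; then $i_n\in\C$ witnesses $\C x\cap U\ne\emptyset$ (the negative-tail case uses the net $j_n\to-\infty$). For the forward implication I specialize to $\Gamma=\{0,\pm1\}$: recurrence of type I forces $\mathbb{N}x\cap U\ne\emptyset$ and $(-\mathbb{N})x\cap U\ne\emptyset$ for every $U$, i.e. $x\in\overline{\{f^nx:n\ge1\}}$ and $x\in\overline{\{f^nx:n\le-1\}}$. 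To upgrade ``some return time'' to ``arbitrarily large return times'' I split into cases: if $x$ is periodic, the multiples of its period give nets $i_n\to+\infty$ and $j_n\to-\infty$ at once; if $x$ is not periodic, then $x\notin\{fx,\dots,f^Mx\}$ for every $M$, and since this finite set is closed the decomposition $\overline{\{f^nx:n\ge1\}}=\{fx,\dots,f^Mx\}\cup\overline{\{f^nx:n>M\}}$ yields $x\in\overline{\{f^nx:n>M\}}$ for all $M$; thus $x$ lies in its positive limit set, and dually in its negative limit set. These memberships are exactly the asserted nets.

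For (2) the plan is a chaining argument showing that a positively recurrent point of $f$ is positively recurrent for $f^{\kappa}$. Passing to a cofinal subnet I may assume all $i_n$ lie in a single residue class $r$ modulo $\kappa$. Fixing $U\in\mathfrak{N}_x$ and $M$, I build a sum of $\kappa$ return times: choose $a_1>M$ with $a_1\equiv r$ and $f^{a_1}x\in U$; by continuity of $f^{a_1}$ pick $W_1\in\mathfrak{N}_x$ with $f^{a_1}(W_1)\subseteq U$; choose $a_2>M$ with $a_2\equiv r$ and $f^{a_2}x\in W_1$, then $W_2$ with $f^{a_2}(W_2)\subseteq W_1$; continue down to $a_\kappa$ with $f^{a_\kappa}x\in W_{\kappa-1}$, each choice possible because cofinally many $i_n$ lie in class $r$ with $f^{i_n}x\to x$. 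Reading the composition from the inside out gives $f^{a_1+\cdots+a_\kappa}x\in U$, while $k:=a_1+\cdots+a_\kappa\equiv\kappa r\equiv0\pmod{\kappa}$ and $k>M$, so $k\in\kappa\mathbb{N}$. Letting $(U,M)$ range over $\mathfrak{N}_x\times\mathbb{N}$ produces the required net $k_n\in\kappa\mathbb{N}$ with $k_n\to\infty$ and $f^{k_n}x\to x$.

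The main obstacle is the cone-structure step underlying (1): reducing $\Gamma$-recurrence to Poisson stability for \emph{every} finite generating set hinges on a cone containing an entire half-line tail rather than merely a discretely thick set, and it is precisely the finiteness of $\Gamma$ --- which makes $\Gamma^k$ stabilize to a genuine interval and keeps the left endpoint of $\K(g)$ bounded --- that delivers this stronger conclusion. In (2) the delicate point is the bookkeeping that forces the total $a_1+\cdots+a_\kappa$ into $\kappa\mathbb{Z}$ while the order-sensitive, continuity-driven choice of the $a_j$ and the nested neighborhoods $W_j$ keeps the orbit returning to $U$.
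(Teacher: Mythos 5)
Your argument is correct in substance but differs from the paper's at both halves, so a comparison is worth recording. For part (1) the paper merely remarks that the only $\{0,\pm1\}$-cones in $\mathbb{Z}$ are $\mathbb{N}$ and $-\mathbb{N}$ and declares that the statement ``follows easily''; you supply the two steps this elides: the upgrade from $x\in\overline{\{f^nx: n\ge1\}}$ to a net of return times tending to $+\infty$ (your periodic versus non-periodic case split), and, more importantly, the verification that Poisson stability implies $\Gamma$-recurrence of type I for \emph{every} finite generating set $\Gamma$, which is what Def.~\ref{R2.2}a actually demands. Your structural observation that every $\Gamma$-cone in $\mathbb{Z}$ contains a one-sided tail is exactly the right fact here and does not appear in the paper. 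For part (2) your route is genuinely different: the paper writes $i_n=k_n-\tau_n$ with $k_n\in\kappa\mathbb{N}$ and $\tau_n\in[0,\kappa)$, extracts $f^{k_n}x\to f^\tau x$, and then shows successively that $f^{2\tau}x,f^{3\tau}x,\dotsc,f^{\kappa\tau}x$ are limits of $f^{\kappa\mathbb{N}}x$ (the classical Gottschalk/Erd\H{o}s--Stone translation argument), whereas you fix a residue class $r$ modulo $\kappa$ and chain $\kappa$ return times through nested neighborhoods so that the total return time lands in $\kappa\mathbb{Z}$. Both are valid; yours trades the paper's slightly delicate ``the same reasoning would show'' induction for an explicit finite recursion, at the cost of having to pass to a residue-constant subnet first (legitimate, since a net in a finite set has a constant subnet).

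One intermediate claim in your cone analysis is false as stated: for a finite symmetric generating set $\Gamma\ni 0$ of $\mathbb{Z}$ with $b=\max\Gamma$, the ball $\Gamma^k$ need \emph{not} equal $[-bk,bk]\cap\mathbb{Z}$ for large $k$. Take $\Gamma=\{0,\pm1,\pm3\}$: a sum of $k$ elements of $\Gamma$ containing $j$ summands equal to $\pm1$ is at most $3k-2j$, so a sum equal to $3k-1$ forces $j=0$, but then the sum is divisible by $3$; hence $3k-1\notin\Gamma^k$. What is true, and is all your argument needs, is the two-sided estimate $[-bk+C,\,bk-C]\cap\mathbb{Z}\subseteq\Gamma^k\subseteq[-bk,bk]\cap\mathbb{Z}$ for a constant $C=C(\Gamma)$: choose $k_0$ with $1\in\Gamma^{k_0}$, represent $m=bq+r$ with $0\le r<b$ using $q$ copies of $b$, at most $bk_0$ letters for $r$, and padding by $0$, which gives $C=b^2k_0$. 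With this weaker inclusion the guaranteed subinterval of $\K(g)$ still has left endpoint at most $b+C$ when $g>0$ while its right endpoint tends to $+\infty$, so every $\Gamma$-cone still contains a tail $[N,\infty)\cap\mathbb{Z}$ or $(-\infty,-N]\cap\mathbb{Z}$, and your reduction of type~I recurrence to Poisson stability goes through unchanged. Please replace the exact-interval claim by this estimate.
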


\begin{proof}
(1). Note that a $\Gamma$-cone $\C$ in $\mathbb{Z}$ with $\Gamma=\{-1,0,1\}$ is either $\C=\mathbb{N}$ or $\C=-\mathbb{N}$, where $\mathbb{N}$ is the set of positive integers. Then the statement follows easily from this fact.

(2). Let $i_n=k_n-\tau_n$, where $k_n\in\kappa\mathbb{N}$ and $\tau_n\in[0,\kappa)\cap\mathbb{Z}$. We may assume $\tau_n\to\tau\in\mathbb{Z}_+$. Then $f^{k_n-\tau_n}x\to x$ so that
$f^{k_n}x\to f^\tau x$. Further $f^{k_n+\tau}x\to f^{2\tau}x$. We shall show that $f^{2\tau}x$ is a limit of the set $f^{\kappa\mathbb{N}}x=\{f^tx\,|\,t\in\kappa\mathbb{N}\}$. For this, let $\varepsilon,\alpha\in\mathscr{U}$ with $\alpha^2\subseteq\varepsilon$. Let $n_0$ be an index then there is an index $v>n_0$ such that $(f^{2\tau}x,f^{k_v+\tau}x)\in\alpha$. There is an index $\delta\in\mathscr{U}$ such that if $(q,f^\tau x)\in\delta$ then $(f^{k_v}q,f^{k_v+\tau}x)\in\alpha$. Then by $f^{k_n}x\to f^\tau x$ and $(f^{2\tau}x,f^{k_v+\tau}x)\in\alpha$, it follows that as $n$ sufficiently big, $(f^{k_n+k_v}x,f^{2\tau}x)\in\varepsilon$. Thus $f^{2\tau}x$ is a limit of $f^{\kappa\mathbb{N}}x$.

The same reasoning would show that $f^{3\tau}x, f^{4\tau}x, \dotsc$ are also limits of $f^{\kappa\mathbb{N}}x$. Then $f^{\kappa\tau}x$ and so $x$ is a limit of $f^{\kappa\mathbb{N}}x$. The proof is complete.
\end{proof}

Condition (3) in the following corollary is exactly the definition of the recurrence in \cite{GH} for generative groups.

\begin{cor*}
Let $f\colon X\rightarrow X$ be a homeomorphism of $X$ and $x\in X$. Then: (1) $x$ is Poisson stable iff (2) $f^Sx\cap U\not=\emptyset$ for all $U\in\mathfrak{N}_x$ and all subsemigroup $S$ of $\mathbb{Z}$ iff (3) $f^Sx\cap U\not=\emptyset$ for all $U\in\mathfrak{N}_x$ and all thick subsemigroup $S$ of $\mathbb{Z}$.
\end{cor*}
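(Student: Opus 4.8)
The plan is to prove the cycle $(1)\Rightarrow(2)\Rightarrow(3)\Rightarrow(1)$. The implication $(2)\Rightarrow(3)$ is immediate: every thick subsemigroup of $\mathbb{Z}$ is in particular a subsemigroup, so the requirement in (3) is a special case of that in (2).

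For $(3)\Rightarrow(1)$, I would test (3) against the two families of thick subsemigroups $S_N^+=\{t\in\mathbb{Z}\,|\,t\ge N\}$ and $S_N^-=\{t\in\mathbb{Z}\,|\,t\le-N\}$ for $N\ge1$. Each $S_N^\pm$ is a subsemigroup (for $N\ge1$ one has $s,t\ge N\Rightarrow s+t\ge N$, and dually) and is readily seen to be thick. Fixing $U\in\mathfrak{N}_x$ and $N$, condition (3) applied to $S_N^+$ produces some $t\ge N$ with $f^tx\in U$. Indexing over the directed set $\mathbb{N}\times\mathfrak{N}_x$, with $\mathfrak{N}_x$ ordered by reverse inclusion, and selecting such a $t_{(N,U)}$ for each pair yields a net with $t_{(N,U)}\to+\infty$ and $f^{t_{(N,U)}}x\to x$; symmetrically, the family $S_N^-$ yields a net tending to $-\infty$ along which $f$ returns $x$ to itself. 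Hence $x$ is Poisson stable.

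The substantive step is $(1)\Rightarrow(2)$, and the governing observation is a dichotomy for subsemigroups $S$ of $\mathbb{Z}$. If $0\in S$, then $x=f^0x\in f^Sx\cap U$ for every $U\in\mathfrak{N}_x$ and there is nothing to prove. If $S$ contains a positive element $a$ and a negative element $b$, then the sum of $|b|$ copies of $a$ together with $a$ copies of $b$ lies in $S$ and equals $|b|a+ab=0$, so again $0\in S$. It therefore suffices to treat the one-sided cases $S\subseteq\mathbb{N}$ and $S\subseteq-\mathbb{N}$. Suppose $S\subseteq\mathbb{N}$ and choose any $s_0\in S$; since $S$ is closed under addition, $s_0\mathbb{N}=\{ns_0\,|\,n\ge1\}\subseteq S$. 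By Poisson stability there is a net $i_n\to+\infty$ in $\mathbb{N}$ with $f^{i_n}x\to x$, so Proposition~\ref{R4.6}(2), applied with $\kappa=s_0$, furnishes a net $k_n\in s_0\mathbb{N}\subseteq S$ with $k_n\to\infty$ and $f^{k_n}x\to x$; then $f^{k_n}x\in U$ eventually, giving $f^Sx\cap U\ne\emptyset$. The case $S\subseteq-\mathbb{N}$ follows by applying the same argument to the homeomorphism $f^{-1}$ and the subsemigroup $-S\subseteq\mathbb{N}$, noting that $x$ is Poisson stable for $f$ exactly when it is for $f^{-1}$.

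The step I expect to be the crux is $(1)\Rightarrow(2)$: the difficulty is that (2) ranges over arbitrary, possibly very sparse, subsemigroups, while Poisson stability supplies recurrence only along one unstructured net. The dichotomy disposes of the genuinely two-sided semigroups without any dynamics, and the elementary observation $s_0\mathbb{N}\subseteq S$ reduces each one-sided case to precisely the hypothesis of Proposition~\ref{R4.6}(2), which performs the real work of upgrading recurrence to recurrence along a full arithmetic progression $s_0\mathbb{N}$ sitting inside $S$.
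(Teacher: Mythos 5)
Your proof is correct and takes essentially the same route as the paper, which disposes of the corollary in three lines: $(1)\Rightarrow(2)$ by Proposition~\ref{R4.6}, $(2)\Rightarrow(3)$ trivially, and $(3)\Rightarrow(1)$ from the fact that $\mathbb{N}$ and $-\mathbb{N}$ are thick subsemigroups of $\mathbb{Z}$. Your elaborations---the dichotomy $0\in S$ versus $S\subseteq\mathbb{N}$ or $S\subseteq-\mathbb{N}$, the inclusion $s_0\mathbb{N}\subseteq S$ feeding into Proposition~\ref{R4.6}(2), and the nets extracted from the semigroups $S_N^{\pm}$---are precisely the details the paper leaves implicit.
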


\begin{proof}
(1)$\Rightarrow$(2) follows obviously from Proposition~\ref{R4.6}. (2)$\Rightarrow$(3) is trivial. Finally (3)$\Rightarrow$(1) follows from that $\mathbb{N}$ and $-\mathbb{N}$ both are thick semigroups in $\mathbb{Z}$.
\end{proof}

There exists an example~\cite{AGW} of a $\mathbb{Z}$-flow where all points are positively recurrent, but there are points which are not negatively recurrent. Thus the recurrence of type I is stronger than the positive recurrence.

\begin{prop}[{cf.~\cite[Thm.~7.10]{GH} for $G$ abelian finitely generated}]\label{R4.7}
Let $(G,X)$ be a flow with $G$ finitely generated and with $X$ 0-dimensional compact. If $(G,X)$ is pointwise regularly a.p, then $(G,X)$ is regularly a.p.
\end{prop}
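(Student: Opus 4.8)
The plan is to upgrade the \emph{pointwise} regular almost periodicity to a \emph{uniform} one by a compactness argument, using zero-dimensionality to replace an arbitrary entourage by a finite clopen partition, and using finite generation of $G$ to push Theorem~\ref{R2.6}a down to finite-index subgroups. First I would note that a regularly a.p point is a.p (a set containing a syndetic subgroup is syndetic), so $(G,X)$ is pointwise a.p; all the equivalent conditions of Theorem~\ref{R2.6}a are then at our disposal. Now fix $\alpha\in\mathscr{U}$. Choose a symmetric open entourage $\beta$ with $\beta\circ\beta\subseteq\alpha$, and, since $X$ is compact and $0$-dimensional, a finite clopen partition $\mathcal{P}=\{V_1,\dots,V_m\}$ of $X$ with $V_j\times V_j\subseteq\alpha$ for each $j$. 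Write $V(x)$ for the cell of $\mathcal{P}$ containing $x$.

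Second, for each $x\in X$ I would extract a \emph{single} finite-index normal subgroup that keeps a whole clopen neighborhood of $x$ inside $V(x)$. Since $x\in P_{r.a.p}(X)$ and $V(x)\in\mathfrak{N}_x$, there is a syndetic (hence finite-index, as $G$ is discrete) normal subgroup $H_x\le G$ with $H_xx\subseteq V(x)$, so that $\overline{H_xx}\subseteq V(x)$. Because $H_x$ has finite index in the finitely generated group $G$, $H_x$ is itself finitely generated (a finite-index subgroup of a finitely generated group is finitely generated), and by the Corollary to Lemma~\ref{R1.5}e the subflow $(H_x,X)$ is again pointwise a.p. Applying condition~(8) of Theorem~\ref{R2.6}a to $(H_x,X)$ with the compact-open set $V(x)$ shows that $V(x)_{H_x}^{*}=\{y\in X\mid\overline{H_xy}\subseteq V(x)\}$ is clopen; it contains $x$, so $W_x:=V(x)_{H_x}^{*}\cap V(x)$ is a clopen neighborhood of $x$ with $W_x\subseteq V(x)$ and $H_xW_x\subseteq V(x)$.

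Third, I would globalize by compactness. The family $\{W_x\mid x\in X\}$ covers $X$, so there are $x_1,\dots,x_k$ with $X=W_{x_1}\cup\dots\cup W_{x_k}$. Put $A=\bigcap_{i=1}^{k}H_{x_i}$; this is normal, being an intersection of normal subgroups, and it is syndetic by Lemma~\ref{R1.5}d(1). Given any $x\in X$, choose $i$ with $x\in W_{x_i}$; then for every $a\in A\subseteq H_{x_i}$ we have $ax\in H_{x_i}W_{x_i}\subseteq V(x_i)$, while also $x\in W_{x_i}\subseteq V(x_i)$, so $(ax,x)\in V(x_i)\times V(x_i)\subseteq\alpha$. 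Hence $Ax\subseteq\alpha[x]$ for all $x\in X$, which is exactly the uniform regular almost periodicity of Definition~\ref{R1.5}c, so $(G,X)$ is regularly a.p.

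The step I expect to be the crux is the second one: producing, for a single finite-index subgroup $H_x$, a genuine \emph{clopen neighborhood} of $x$ (not merely the point $x$) on which all $H_x$-orbits stay inside $V(x)$. Pointwise regular almost periodicity only controls the orbit of $x$ itself, and since $H_x$ is typically infinite one cannot simply intersect the neighborhoods coming from the individual elements $h\in H_x$. The resolution is to treat $(H_x,X)$ as a flow in its own right; this is precisely where the hypothesis that $G$ be finitely generated enters (so that $H_x$ is finitely generated and Theorem~\ref{R2.6}a is applicable), and it is the compact-openness of $V(x)_{H_x}^{*}$ that converts the orbit-wise statement into the neighborhood-wise statement needed for the final compactness assembly.
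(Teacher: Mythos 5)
Your argument is correct, and it follows a genuinely different route from the paper's. The paper first observes that pointwise regular almost periodicity makes $(G,X\times X)$ pointwise a.p (given $(x_1,x_2)$ and $U\times V$, intersect the two syndetic normal subgroups via Lemma~\ref{R1.5}d to get one $A$ with $A(x_1,x_2)\subseteq U\times V$), hence $(G,X)$ is distal; it then invokes Theorem~\ref{R3.1.1} to get equicontinuity and finishes by citing Gottschalk--Hedlund (Remark~5.02, Theorem~5.17) for ``equicontinuous $+$ pointwise regularly a.p $\Rightarrow$ regularly a.p''. You instead bypass distality and equicontinuity entirely: you apply condition (8) of Theorem~\ref{R2.6}a not to $(G,X)$ but to each subflow $(H_x,X)$ --- which is legitimate since $H_x$ has finite index in the finitely generated discrete $G$, hence is finitely generated (this is also Corollary~\ref{R1.8}b), and is pointwise a.p by the Corollary to Lemma~\ref{R1.5}e --- so that $V(x)_{H_x}^{*}$ is clopen and converts the orbitwise control at $x$ into control on a whole clopen neighborhood; a finite subcover and Lemma~\ref{R1.5}d(1) then assemble the single syndetic normal subgroup $A$ with $Ax\subseteq\alpha[x]$ for all $x$. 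What your approach buys is a self-contained proof that produces the witnessing subgroup $A$ explicitly for each entourage and avoids the external citation to \cite{GH}; what the paper's approach buys is brevity and the extra byproduct that $(G,X)$ is equicontinuous. Both proofs lean on Theorem~\ref{R2.6}a (yours directly, the paper's through Theorem~\ref{R3.1.1}) and on Lemma~\ref{R1.5}d.
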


\begin{proof}
By Lemma~\ref{R1.5}d, $(G,X)$ is distal. Indeed, let $x_1,x_2\in X$, $U\in\mathfrak{N}_{x_1}$, and $V\in\mathfrak{N}_{x_2}$. There are normal syndetic subgroups $H_1$ and $H_2$ of $G$ with $H_1x_1\subseteq U$ and $H_2x_2\subseteq V$. By Lemma~\ref{R1.5}d, there is a normal syndetic subgroup $A$ of $G$ with $A\subseteq H_1\cap H_2$. Thus
$A(x_1,x_2)\subseteq U\times V$. Then $(G,X\times X)$ is pointwise a.p and this shows that $(G,X)$ is distal.

Further by Theorem~\ref{R3.1.1}, it follows that $(G,X)$ is equicontinuous. Finally by \cite[Remark~5.02, Theorem~5.17]{GH}, we see that $(G,X)$ is regularly a.p. The proof is complete.
\end{proof}

\begin{prop}[{cf.~\cite[Thm.~7.11]{GH} for $G$ abelian finitely generated}]\label{R4.8}
Let $(G,X)$ be a flow with $G$ finitely generated and with $X$ a compact metric space. Then $(G,X)$ is regularly a.p if and only if $(G,X)$ is a pointwise regularly a.p and weakly a.p flow.
\end{prop}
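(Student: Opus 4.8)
The plan is to prove the two implications separately, the forward one being routine and the converse carrying all the difficulty.

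For the forward direction, suppose $(G,X)$ is regularly a.p. Restricting the witnessing subgroups to a single point shows at once that $(G,X)$ is pointwise regularly a.p. To see that it is weakly a.p, fix $\alpha\in\mathscr{U}$ and choose a syndetic normal subgroup $A$ with $Ax\subseteq\alpha[x]$ for all $x$. Since $G$ is discrete and finitely generated, syndeticity means $G=F^{-1}A$ for a \emph{finite} set $F$; then for every $x\in X$ and $t\in G$ we may write $t=f^{-1}a$ with $f\in F$, $a\in A$, whence $ftx=ax\in Ax\subseteq\alpha[x]$ and $Ftx\cap\alpha[x]\neq\emptyset$. The same finite $F$ works for all $x,t$, so $(G,X)$ is weakly a.p.

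For the converse, assume $(G,X)$ is pointwise regularly a.p and weakly a.p. First, pointwise regular almost periodicity forces distality: given $x_1,x_2$ and neighbourhoods $U_1,U_2$, pick syndetic normal subgroups $H_i\subseteq N_G(x_i,U_i)$ and, by Lemma~\ref{R1.5}d(1), a syndetic normal $A\subseteq H_1\cap H_2$, so that $A(x_1,x_2)\subseteq U_1\times U_2$; hence $(G,X\times X)$ is pointwise a.p and $(G,X)$ is distal. Second, being weakly a.p on a compact space, $(G,X)$ is locally weakly a.p, so by Lemma~\ref{R2.4} the orbit-closure map $\mathscr{O}_G$ is upper semi-continuous, hence continuous by the Lemma in \ref{R1.2}, and $R_o(X)$ is closed by \ref{R1.3}b. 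Thus $X$ is partitioned into its compact minimal sets, these vary continuously in the Hausdorff metric, and $X/G$ is compact metric.

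The heart of the argument — and the step I expect to be the main obstacle — is to upgrade this to \textbf{equicontinuity of $(G,X)$}. Each minimal set $M$ inherits pointwise regular almost periodicity, so Theorem~\ref{R3.1.4} (applicable because a finitely generated group has only countably many finite-index normal subgroups) gives a residual, hence non-empty, set of equicontinuity points of $(G,M)$; as $(G,M)$ is minimal, equicontinuity at one point propagates to all of $M$, so every minimal fibre is equicontinuous. The difficulty is that equicontinuity of $(G,X)$ at $x_0$ must control nearby $y$ lying in \emph{different} minimal sets. The mechanism I would use is a per-pair estimate: applying regular almost periodicity to $(x_0,y)\in X\times X$ yields a finite-index normal $A$ keeping both coordinates close to $x_0,y$; decomposing $G$ into the finitely many cosets of $A$ and invoking uniform continuity of the finitely many coset-representative homeomorphisms bounds $\sup_{g}d(gx_0,gy)$ in terms of $d(x_0,y)$. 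Making this bound \emph{uniform} as $y\to x_0$, independently of the fibre containing $y$, is exactly where the continuity of $\mathscr{O}_G$, the compactness of $X/G$, and the single uniform return set supplied by weak almost periodicity must be combined; this uniformity is the crux (note that weak almost periodicity cannot be dropped: the distal, weakly a.p, but non-equicontinuous skew flow on the torus shows it is pointwise regular almost periodicity together with weak almost periodicity that is essential).

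Once equicontinuity is in hand the conclusion follows cleanly. Fix $\alpha\in\mathscr{U}$ and a closed symmetric $\gamma$ with $\gamma\circ\gamma\circ\gamma\subseteq\alpha$, and use uniform equicontinuity to pick an open symmetric $\delta\subseteq\gamma$ with $(x,y)\in\delta\Rightarrow(gx,gy)\in\gamma$ for all $g$. Enumerating the finite-index normal subgroups $H_1,H_2,\dots$, set $E_k=\{x:H_kx\subseteq\gamma[x]\}$; these are closed, $\bigcup_kE_k=X$ by pointwise regular almost periodicity, and the fattening $\delta[E_k]$ is open with $\delta[E_k]\subseteq\{x:H_kx\subseteq\alpha[x]\}$. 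By compactness finitely many $\delta[E_{k_1}],\dots,\delta[E_{k_m}]$ cover $X$, and then $A=H_{k_1}\cap\dots\cap H_{k_m}$ is syndetic normal (Lemma~\ref{R1.5}d(1)) with $Ax\subseteq\alpha[x]$ for every $x$, since $(H\cap H')x\subseteq Hx$ makes the family $\{x:Hx\subseteq\alpha[x]\}$ grow under intersection of subgroups. As $\alpha$ was arbitrary, $(G,X)$ is regularly a.p.
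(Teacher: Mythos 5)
Your outline is sound at both ends but has a genuine, and acknowledged, hole exactly where the content lies. The forward direction and the final ``equicontinuity $\Rightarrow$ regularly a.p'' paragraph are correct (the covering argument with $E_k=\{x: H_kx\subseteq\gamma[x]\}$ and Lemma~\ref{R1.5}d works). The problem is the third step: you never actually prove that $(G,X)$ is equicontinuous, and the ``per-pair mechanism'' you sketch cannot be made to work as stated. Applying pointwise regular almost periodicity to $(x_0,y)$ produces a finite-index normal $A$ with $Ax_0\subseteq\gamma[x_0]$ and $Ay\subseteq\gamma[y]$, whence for $g=fa$ one only gets $d(gx_0,gy)\le\omega_f\bigl(2\gamma+d(x_0,y)\bigr)$; this does \emph{not} tend to $0$ as $d(x_0,y)\to 0$ because the $\gamma$-term cannot be removed, and shrinking $\gamma$ changes $A$ and hence the coset representatives $f$ whose moduli of continuity you are invoking. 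So the estimate is circular, and ``combining continuity of $\mathscr{O}_G$, compactness of $X/G$ and the uniform return set'' is precisely the theorem, not a proof of it. Equicontinuity of each minimal fibre (which you do get, e.g.\ from Theorem~\ref{R3.1.1} since each orbit closure of a regularly a.p point is $0$-dimensional) does not glue to equicontinuity of $(G,X)$ without a new idea.

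For comparison, the paper does not pass through equicontinuity of $(G,X)$ at all. It quotes the Gottschalk--Hedlund characterization (\cite[Thms.~5.17, 4.24]{GH}): it suffices to show that $\mathscr{O}_S\colon x\mapsto\overline{Sx}$ is continuous for every syndetic normal subgroup $S$ of $G$. Given $x_n\to x_0$, weak almost periodicity plus Lemma~\ref{R2.4} gives $\overline{Gx_n}\to\overline{Gx_0}$, so $Y=\bigcup_{i\ge0}\overline{Gx_i}$ is a closed invariant set; each $\overline{Gx_i}$ is $0$-dimensional because $x_i$ is regularly a.p (\cite[Thm.~5.08]{GH}), and a countable union of closed $0$-dimensional subsets of a separable metric space is $0$-dimensional, so $\dim Y=0$. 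Since $S$ is again finitely generated (Corollary~\ref{R1.8}b) and $(S,Y)$ is pointwise a.p by inheritance (Lemma~\ref{R1.5}e), Theorem~\ref{R2.6}a applies to $(S,Y)$ and yields upper semi-continuity of $\mathscr{O}_S$ on $Y$, hence $\overline{Sx_n}\to\overline{Sx_0}$. This localization to the $0$-dimensional set $Y$ (where metrizability of $X$ is used) is the idea your proposal is missing; if you want to keep your equicontinuity route, note that the same $Y$-trick applied to $(G,X\times X)$ together with Lemma~\ref{R1.4}c would be one way to supply it, but some such reduction to a zero-dimensional invariant subspace is unavoidable.
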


\begin{proof}
The proof of \cite[Thm.~7.11]{GH} is still valid for this case using Theorem~\ref{R2.6}a in place of \cite[Theorem~7.08]{GH}. In fact, the necessity  is trivial. To prove the sufficiency, let $S$ be any normal syndetic subgroup of $G$. By \cite[Theorems~5.17 and 4.24]{GH}, it is enough to show that $\mathscr{O}_S\colon x\in X\mapsto\overline{Sx}\in 2^X$ is continuous. Note that by Inheritance Theorem (cf.~Lemma~\ref{R1.5}e), $(S,X)$ is pointwise a.p so $\{\overline{Sx}\,|\,x\in X\}$ is a partition of $X$. Let there be a sequence $x_n\to x_0$ in $X$. Then by Lemma~\ref{R2.4}, $\overline{Gx_n}\to\overline{Gx_0}$ in $2^X$ and further $Y=\bigcup_{i=0}^\infty\overline{Gx_i}$ is closed in $X$. By \cite[Theorem~5.08]{GH}, $\dim \overline{Gx}=0$ for all $x\in X$. Since a separable metric space that is countable union of closed zero-dimensional subsets is itself zero-dimensional (cf.~\cite[Theorem~1.3.1]{En}), $Y$ is $G$-invariant such that $\dim Y=0$. Then $(S,Y)$ is locally weakly a.p by Theorem~\ref{R2.6}a and Corollary~\ref{R1.8}b. Thus $\overline{Sx_n}\to\overline{Sx_0}$. The proof is completed.
\end{proof}

\begin{prop}\label{R4.9}
Let $(G,X)$ be a flow with $G$ finitely generated. Let $U$ be a compact open subset of $X$, which consists of a.p points. Then there is a finite set $K$ in $G$ such that $GU=KU$.
\end{prop}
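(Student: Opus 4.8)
The plan is to reduce the assertion to the uniform boundedness of the $\Gamma$-word length needed to trap the orbit closures of points of $U$ inside $U$, and then to run a first-exit argument based on Lemma~\ref{R2.5}. First I would record the shape of $GU$. Since every $x\in U$ is almost periodic and $U\in\mathfrak N_x$, the second bullet of \ref{R1.5}a gives a finite $F_x\subseteq G$ with $\overline{Gx}\subseteq F_xU$; enlarging $F_x$ to a word-ball we may write $\overline{Gx}\subseteq\Gamma^{n_x}U$ for some integer $n_x\ge1$. Hence $GU=\bigcup_{x\in U}\overline{Gx}=\bigcup_{n\ge1}\Gamma^nU$, an increasing union of the compact-open (hence clopen) sets $W_n:=\Gamma^nU$. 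Thus it suffices to show that this chain stabilizes, i.e. $W_{N+1}=W_N$ for some $N$: then $\Gamma W_N=W_N$, so $W_N$ is $G$-invariant, $GU=W_N=\Gamma^NU$, and $K=\Gamma^N$ works. Writing $\phi(x)$ for the least $n$ with $\overline{Gx}\subseteq\Gamma^nU$, this is the same as bounding $\phi$ on $U$.

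The key reduction is to the upper semi-continuity of $\mathscr O_G$ at the points of $U$. Suppose I can show: for each $x_0\in U$ there are $V_{x_0}\in\mathfrak N_{x_0}$ and an integer $m_0$ with $\overline{Gx'}\subseteq\Gamma^{m_0}U$ for all $x'\in V_{x_0}$. Then $\{V_{x_0}\}_{x_0\in U}$ is an open cover of the compact set $U$; a finite subcover together with $N:=\max m_0$ gives $\overline{Gx}\subseteq\Gamma^NU$ for every $x\in U$, whence $\phi\le N$ and we are done. So the whole problem is the local statement $x_0\in\texttt{int}\,W_G^*$, where $W:=\Gamma^{m_0}U$ with $m_0=\phi(x_0)$ is a clopen compact neighborhood of the minimal set $\overline{Gx_0}$.

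To prove this local statement I would argue by contradiction and invoke Lemma~\ref{R2.5}, exactly as in the implication $(1)\Rightarrow(8)$ of Theorem~\ref{R2.6}a. If $x_0\notin\texttt{int}\,W_G^\infty$ (recall $W_G^*=W_G^\infty$ for $W$ clopen, by \ref{R1.2}c), then Lemma~\ref{R2.5} applied to $x_0$ and the open set $W\supseteq\overline{Gx_0}$ with $\overline W=W$ compact produces a $\Gamma$-cone $\C$ (with $\K(g_i^{-1})\rightharpoonup\C$) and a point $y\in X\setminus W$ such that $\C y\subseteq\overline W=W$. Because $W$ is clopen, $X\setminus W$ is a clopen neighborhood of $y$, while $\C y\subseteq W$ forces $\C y\cap(X\setminus W)=\emptyset$. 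Hence, \emph{provided $y$ is $\Gamma$-recurrent of type I}, Def.~\ref{R2.2}a is violated, and the local statement follows.

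The main obstacle is precisely to show that the limit point $y$ is recurrent. Since $y=\lim_i g_ix_i$ with $x_i\to x_0$ and $x_i\in U$, we have $y\in\overline{GU}$; and as each $\overline{Gx}$ ($x\in U$) is minimal, it would suffice to know $y\in GU=\bigcup_{x\in U}\overline{Gx}$, for then $y$ lies in a minimal set, is almost periodic, and so is $\Gamma$-recurrent of type I by Lemma~\ref{R2.3}. Thus I must rule out that orbits emanating from $U$ escape, i.e. show $\overline{GU}=GU$ is compact, and this is where \emph{Lagrange stability and local compactness are indispensable}: were $\overline{GU}$ non-compact, a shortest-word first-exit analysis out of a relatively compact open $N\supseteq U$ would trap the first-exit points in the fixed compact set $(\Gamma\overline N)\setminus N$, so they accumulate at some $e_*\in X$ whose orbit is then forced to leave every compact set, contradicting the compactness of $\overline{Ge_*}$ from \textcircled{c}(iii). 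The cleanest formalization I expect is to pass to the one-point compactification $X^+=X\cup\{\infty\}$, on which $(G,X^+)$ is a flow with $\infty$ fixed, and to prove $\infty\notin\overline{GU}^{X^+}$ by exploiting that each $\overline{Gx_i}$ is minimal; this yields $y\in GU$ and closes the gap. Once $y$ is known recurrent, the contradiction above gives the local upper semi-continuity, and the covering argument completes the proof with $K=\Gamma^N$.
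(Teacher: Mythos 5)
Your opening reduction is the same as the paper's: set $W_\ell=\Gamma^\ell U$, note $GU=\bigcup_\ell W_\ell$ is an increasing union of compact--open sets, and observe that it suffices to make this chain stabilize. After that, however, your route through Lemma~\ref{R2.5} has a genuine gap, and it is exactly the one you flag yourself: the point $y\in X\setminus W$ produced by Lemma~\ref{R2.5} is only known to lie in $\overline{GU}$, not in $GU=\bigcup_{x\in U}\overline{Gx}$, so you cannot conclude that $y$ is almost periodic, hence cannot conclude it is $\Gamma$-recurrent of type I, and the contradiction with $\C y\cap(X\setminus W)=\emptyset$ never materializes. Your proposed repair --- prove that $GU$ is compact (or closed) --- does not close the gap for two reasons. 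First, it is circular: since $GU$ is covered by the increasing sequence of \emph{open} sets $W_\ell$, compactness of $GU$ already forces $GU=W_N$ for some $N$, which is the proposition itself; so the ``missing ingredient'' is equivalent to the conclusion, and if you had it, the entire Lemma~\ref{R2.5} apparatus would be superfluous. Second, the first-exit sketch does not produce a contradiction: the first-exit points out of a relatively compact open $N\supseteq U$ do accumulate at some $e_*\in(\Gamma\overline N)\setminus N$, but nothing forces $\overline{Ge_*}$ to be non-compact --- Lagrange stability holds at $e_*$ just as everywhere else, so no contradiction with \textcircled{c}(iii) arises. The one-point compactification variant has the same defect: even granting $\infty\notin\overline{GU}$, a limit of points taken from \emph{different} minimal sets $\overline{Gx_i}$ need not lie in any minimal set, so $y$ still need not be almost periodic.

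The paper avoids this trap by choosing its witnesses so that the limit point lands back inside $U$, where almost periodicity is a hypothesis rather than something to be proved. Concretely: if no $W_\ell$ stabilizes, there are $t_\ell$ with $|t_\ell|=\ell+1$ and $y_\ell\in U$ with $t_\ell y_\ell\notin\Gamma^\ell U$; by compactness of $U$ a subnet gives $y_\ell\to y\in U$, so $N_G(y,U)$ is discretely syndetic. For any $s\in N_G(y,U)$ one has $sy_\ell\in U$ eventually, whence $t_\ell s^{-1}\notin\Gamma^\ell$, i.e.\ $s\notin\Gamma^\ell t_\ell=\K(t_\ell)$ eventually, so $s\notin\C$ for $\K(t_\ell)\rightharpoonup\C$. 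Thus the cone $\C$, which is thick by Lemma~\ref{R2.1}c, misses the syndetic set $N_G(y,U)$ --- a contradiction. If you want to salvage your outline, the fix is not to chase compactness of $GU$ but to rebuild the exit data so that the base points of the escaping orbits stay in $U$ (where the a.p.\ hypothesis lives) and the exponent of $\Gamma$ being escaped matches the word length of the escaping element; that is precisely the paper's argument.
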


\begin{proof}
Let $G$ be finitely generated by $\Gamma$ with $\Gamma=\Gamma^{-1}$. For $\ell\in\mathbb{N}$ set $W_\ell=\Gamma^\ell U$. If there exists $\ell\in\mathbb{N}$ such that $\Gamma W_\ell\subseteq W_\ell$, then by induction $GW_\ell\subseteq W_\ell$. Hence in particular, $GU\subseteq \Gamma^\ell U$, and this completes the proof by letting $K=\Gamma^\ell$. Now assume that for all $\ell\in\mathbb{N}$ we have $\Gamma W_\ell\nsubseteq W_\ell$; i.e., $\Gamma^{\ell+1}U\nsubseteq\Gamma^\ell U$. Then for every $\ell\in\mathbb{N}$ there are $t_\ell\in\Gamma^{\ell+1}$ with $|t_\ell|=\ell+1$ and $y_\ell\in U$ such that $t_\ell y_\ell\notin \Gamma^\ell U$. We may assume (a subnet of) $y_\ell\to y\in U$. Let $s\in N_G(y,U)$; then $sy_\ell\to sy\in U$, so as $\ell$ sufficiently large, $sy_\ell\in U$, hence $t_\ell y_\ell\in t_\ell s^{-1}U$, and therefore $t_\ell s^{-1}\notin \Gamma^\ell$ and $s^{-1}\notin t_\ell^{-1}\Gamma^\ell$, or equivalently, $s\notin \Gamma^\ell t_\ell$. Let $\K(t_\ell)\rightharpoonup\C$, a $\Gamma$-cone in $G$ as in \ref{R2.1}b. Then $s\notin\C$ and $\C\cap N_G(y,U)=\emptyset$, contrary to that $\C$ is thick and $N_G(y,U)$ is syndetic in $G$. The proof is complete.
\end{proof}

\begin{rems}\label{R4.10}
\begin{enumerate}[(1)]
\item The $\Gamma$-recurrence of type I (Def.~\ref{R2.2}a) is conceptually dependent of the generating set $\Gamma$ of $G$. So it should be interested to generalize Proposition~\ref{R4.6} from $G=\mathbb{Z}$ to a more general non-abelian case.
\item Can we remove the ``metric'' condition in Proposition~\ref{R4.8}?
\item Can the assumption that $G$ is finitely generated be replaced by that $G$ is compactly generated in any of Propositions~\ref{R4.8} and \ref{R4.9}?
\end{enumerate}
\end{rems}
%%%%%%%%%%%%%%%%%%%%%%%%%%%%%%%%%%%%%%%%%%%%%%%%%%%%%%%%%%%%%%%%%%%%%
\section{The case when $G$ is equicontinuously generated}\label{secR5}%%

Let $\mathcal{C}_u(X,X)$ be the space of continuous maps from $X$ to itself with the topology of uniform convergence on compacta. Clearly, $(\textrm{Homeo}\,(X),X)$ defined by $\textrm{Homeo}\,(X)\times X\xrightarrow{(f,x)\mapsto fx}X$ is a flow, where $(f,x)\mapsto fx$ is jointly continuous but $\overline{\textrm{Homeo}\,(X)x}$ need not be compact for $x\in X$. In the sequel let $S\subset\textrm{Homeo}\,(X)$ such that:
\begin{enumerate}[1)]
\item $e=\textrm{id}_X\in S=S^{-1}$ and
\item $S$ acting equicontinuously on $X$ (i.e., given $\varepsilon\in\mathscr{U}$ and $x_0\in X$, there is a $U\in\mathfrak{N}_{x_0}$ such that $(sx,sx_0)\in\varepsilon$ for all $x\in U$ and all $s\in S$; cf.~Def.~\ref{R1.2}d).
\end{enumerate}

\begin{se}\label{R5.1}
Set $\langle S\rangle=\bigcup_{r=1}^\infty S^r$. Clearly $\langle S\rangle$ is a subgroup of $\textrm{Homeo}\,(X)$, which is said to be \textit{equicontinuously generated by $S$}.
\end{se}

\begin{se}\label{R5.2}
Write $\Gamma=\textrm{cls}_uS$, where $\textrm{cls}_u$ denotes the closure relative to $\mathcal{C}_u(X,X)$. Then:

\begin{lem*}
If $Sx$ is relatively compact in $X$ for all $x\in X$, then $\Gamma$ is a compact subset of $\mathcal{C}_u(X,X)$ and $\Gamma\subset\textrm{Homeo}\,(X)$ such that $e\in\Gamma=\Gamma^{-1}$.
\end{lem*}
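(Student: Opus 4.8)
The plan is to prove the three assertions separately: compactness of $\Gamma$, the inclusion $\Gamma\subseteq\textrm{Homeo}\,(X)$, and the identity $e\in\Gamma=\Gamma^{-1}$. The membership $e\in\Gamma$ is immediate, since $e=\textrm{id}_X\in S\subseteq\textrm{cls}_uS=\Gamma$; the substance lies in the other two, and the key leverage throughout will be the hypothesis that every orbit $Sx$ is relatively compact.

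First I would obtain compactness from the Arzel\`a--Ascoli theorem. The family $S$ is pointwise equicontinuous by the assumption on $S$, and it is pointwise relatively compact because $\{sx\,|\,s\in S\}=Sx\subseteq\overline{Sx}$ with $\overline{Sx}$ compact for every $x\in X$. Since $X$ is locally compact Hausdorff, the topology of uniform convergence on compacta on $\mathcal{C}_u(X,X)$ agrees with the compact--open topology, and Arzel\`a--Ascoli then yields that $S$ is relatively compact in $\mathcal{C}_u(X,X)$; hence its closure $\Gamma=\textrm{cls}_uS$ is compact. In particular each $f\in\Gamma$ is a uniform-on-compacta limit of continuous maps, so $f$ is itself continuous and $\Gamma\subseteq\mathcal{C}_u(X,X)$.

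Next, to see that every $f\in\Gamma$ is a homeomorphism with $f^{-1}\in\Gamma$, I would pick a net $s_i\in S$ with $s_i\to f$. Because $S=S^{-1}$ we have $s_i^{-1}\in S$, and by compactness of $\Gamma$ we may pass to a subnet so that $s_i^{-1}\to g$ for some $g\in\Gamma$. The crucial step is the interchange $s_i(s_i^{-1}x)\to f(g(x))$. Fix $x\in X$ and set $K=\overline{Sx}$, which is compact. Then $s_i^{-1}x\in Sx\subseteq K$ and $g(x)=\lim_i s_i^{-1}x\in K$ since $K$ is closed. Given $\varepsilon\in\mathscr{U}$, uniform convergence of $s_i\to f$ on $K$ gives $(s_i(s_i^{-1}x),f(s_i^{-1}x))\in\varepsilon$ eventually, while continuity of $f$ gives $(f(s_i^{-1}x),f(g(x)))\in\varepsilon$ eventually; hence $s_i(s_i^{-1}x)\to f(g(x))$. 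But $s_i(s_i^{-1}x)=x$ for all $i$, so $f(g(x))=x$. The symmetric computation, using $s_ix\to f(x)\in\overline{Sx}$ together with $s_i^{-1}\to g$ uniformly on $\overline{Sx}$, gives $g(f(x))=x$. As $x$ is arbitrary, $f\circ g=g\circ f=\textrm{id}_X$, so $f$ is a bijection whose inverse is the continuous map $g$; thus $f\in\textrm{Homeo}\,(X)$ and $f^{-1}=g\in\Gamma$. This establishes $\Gamma\subseteq\textrm{Homeo}\,(X)$ and $\Gamma^{-1}\subseteq\Gamma$, and applying the latter to $f^{-1}$ gives $f=(f^{-1})^{-1}\in\Gamma^{-1}$, whence $\Gamma=\Gamma^{-1}$.

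The main obstacle is precisely the interchange-of-limits step $s_i(s_i^{-1}x)\to f(g(x))$: the arguments $s_i^{-1}x$ vary with $i$, whereas uniform convergence on compacta only controls $s_i$ on a \emph{fixed} compact set. This is where the relative compactness of orbits is indispensable, for it confines all the moving points $s_i^{-1}x$ to the single compact set $\overline{Sx}$. I expect this to be the only delicate point; notably, the argument never invokes continuity of inversion in the ambient group, which is what makes it appropriate in the merely para-topological setting.
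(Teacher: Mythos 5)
Your proof is correct, and it follows the same skeleton as the paper's: Ascoli's theorem for compactness of $\Gamma$ (with relative compactness of the orbits $Sx$ supplying the pointwise precompactness), then a net $s_i\in S$ with $s_i\to f$, a convergent subnet $s_i^{-1}\to g\in\Gamma$ of the inverses, and the limit identity $s_is_i^{-1}\to f\circ g$ to pin down the inverse. The one genuine organizational difference is in how bijectivity of $f$ is obtained. The paper proves injectivity and surjectivity by two separate arguments---injectivity from equicontinuity of $S$ (two distinct points with the same image would both land in $f_n^{-1}\delta[z]$ eventually), surjectivity by extracting a convergent subnet of the preimages $f_n^{-1}x$ inside the compact set $\Gamma x$---and only then uses $f\circ g=e$ to identify $g=f^{-1}$. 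You instead establish the two-sided identity $f\circ g=g\circ f=\mathrm{id}_X$ directly, which yields bijectivity for free and makes those two auxiliary arguments unnecessary; the price is carrying out the interchange-of-limits twice, once on each side, both times justified by confining the moving arguments to the compact set $\overline{Sx}$. Your version also makes explicit the point the paper compresses into the single line ``$e=f_nf_n^{-1}\to fg$'', namely that uniform convergence on compacta suffices precisely because $s_i^{-1}x$ stays in $\overline{Sx}$, and it correctly avoids any appeal to continuity of inversion, as required in the para-topological setting. Net effect: a slightly more streamlined and self-contained proof of the same lemma by the same essential mechanism.
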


\begin{proof}
Since $S$ is equicontinuous, so is $\Gamma$. Then $\Gamma$ is compact in $\mathcal{C}_u(X,X)$ by Ascoli's Theorem (cf.~\cite[Theorem~7.17]{K}). Moreover, $\Gamma x$ is compact in $X$. We need prove that $\Gamma\subset\textrm{Homeo}\,(X)$. For this, let $S\ni f_n\to f\in\Gamma$.
Let $x\not=y$ and $\varepsilon\in\mathscr{U}$ with $(x,y)\notin\varepsilon$. If $fx=fy=z$, then for $\delta\in\mathscr{U}$ we have $x, y\in f_n^{-1}\delta[z]$ as $n$ sufficiently large, contrary to equicontinuity of $S$. Thus $f$ is injective.
Let $x\in X$. Since $f_nX=X$, there is a point $x_n\in \Gamma x$ with $f_nx_n=x$ or $x_n=f_n^{-1}x$. We can assume (a subnet of) $x_n\to x^\prime$ for $\Gamma x$ is compact. Then $fx^\prime=x$ so $fX=X$. Thus $f$ is a continuous bijection. Now let (a subnet of) $f_n^{-1}\in S\to g\in\Gamma$. Since $e=f_nf_n^{-1}\to fg$, so $fg=e$, $g=f^{-1}\in\Gamma$, and  $f\in\textrm{Homeo}\,(X)$. The proof is complete.
\end{proof}
\end{se}

\begin{se}\label{R5.3}
Let $Sx$ be relatively compact in $X$ for all $x\in X$. Set $\langle\Gamma\rangle=\bigcup_{r=1}^\infty \Gamma^r$. Then $\langle\Gamma\rangle$ is a compactly generated subgroup of $\textrm{Homeo}\,(X)$ with a generating set $\Gamma$. Moreover, $(\langle\Gamma\rangle,X)$ is a flow; however, $\overline{\langle\Gamma\rangle x}$ need not be compact for $x\in X$. It is evident that $S^r$ is dense in $\Gamma^r$ for all $r\ge1$ and $\langle S\rangle$ is dense in $\langle\Gamma\rangle$ under the topology of uniform convergence.
\end{se}

Following Def.~\ref{R2.2}b, a point $x\in X$ is $S$-recurrent of type II under $(\langle S\rangle,X)$ if for every $U\in\mathfrak{N}_x$ and every net $\{g_i\}$ in $\langle S\rangle$ with $|g_i|\to\infty$, there is an integer $n$, a subnet $\{g_{i_j}\}$ from $\{g_i\}$ and $c_j\in S^{|g_{i_j}|-1}\cdot g_{i_j}$ such that $c_jx\in U$ and $|c_j|\le n$.

We do not know if a point $S$-recurrent of type II is $\Gamma$-recurrent of type II, yet it is $\Gamma$-recurrent of type I by Lemma~\ref{R5.4} below.

\begin{lem}\label{R5.4}
Let $Sx$ be relatively compact in $X$ for all $x\in X$. If $x_0$ is $S$-recurrent of type II under $(\langle S\rangle,X)$, then $x_0$ is $\Gamma$-recurrent of type I under $(\langle\Gamma\rangle,X)$.
\end{lem}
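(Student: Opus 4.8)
The plan is to fix an arbitrary $\Gamma$-cone $\C$ in $\langle\Gamma\rangle$ together with an arbitrary $U\in\mathfrak{N}_{x_0}$, and to produce a single $c\in\C$ with $cx_0\in U$; this is exactly the assertion that $x_0$ is $\Gamma$-recurrent of type I under $(\langle\Gamma\rangle,X)$. By Def.~\ref{R2.1}b write the cone as $\K(h_i)=\Gamma^{|h_i|-1}h_i\rightharpoonup\C$, where $\{h_i\}$ is a net in $\langle\Gamma\rangle$ whose $\Gamma$-lengths $\ell_i:=|h_i|$ increase to $\infty$, and choose $V\in\mathfrak{N}_{x_0}$ with $\overline{V}\subseteq U$. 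The idea is to transport $\{h_i\}$ into $\langle S\rangle$, invoke the hypothesis ($S$-recurrence of type II at $x_0$) to extract short approximate return words, and then let those words converge inside the compact set $\bigcup_{r\le n}\Gamma^r$ to the desired $c$.

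Concretely, since $S^{\ell_i}$ is dense in $\Gamma^{\ell_i}$ (see~\ref{R5.3}) and $h_i\in\Gamma^{\ell_i}$, I would choose $g_i\in S^{\ell_i}$ approximating $h_i$ so tightly---uniformly on a fixed compact set $W$ carrying the relevant orbit points, where the compactness of $\overline{Sx}$ and of $\Gamma x_0$ enters---that the ``good error'' $g_ih_i^{-1}$ tends to $e$ on $W$ and $g_ix_0\to h_ix_0$. After passing to a subnet I may assume $|g_i|_S\to\infty$, the $S$-length tending to infinity (if not, then $\{g_i\}$, and hence $\{h_i\}$, subconverges to a single element of bounded $\Gamma$-length while $\ell_i\to\infty$, which forces $\Gamma^{\ell_i-1}h_i$ to exhaust $\langle\Gamma\rangle$ in the limit, so that $\C x_0$ is dense and already meets $U$). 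Applying the $S$-recurrence of type II to $\{g_i\}$ and to $V$ yields an integer $n$, a subnet $\{g_{i_j}\}$, and elements $c_j=\sigma_jg_{i_j}\in S^{|g_{i_j}|_S-1}g_{i_j}$ with $|c_j|_S\le n$ and $c_jx_0\in V$. Since $c_j\in\bigcup_{r\le n}S^r\subseteq\bigcup_{r\le n}\Gamma^r$, which is compact in $\mathcal{C}_u(X,X)$ by Lemma~\ref{R5.2}, I may pass to a further subnet with $c_j\to c\in\langle\Gamma\rangle$; then $c_jx_0\to cx_0\in\overline{V}\subseteq U$.

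It remains to verify $c\in\C$, and \emph{this is the main obstacle}. The natural witnesses are the elements $\sigma_jh_{i_j}$: since $\sigma_j\in S^{|g_{i_j}|_S-1}\subseteq\Gamma^{|g_{i_j}|_S-1}\subseteq\Gamma^{\ell_{i_j}-1}$ (using $|g_{i_j}|_S\le\ell_{i_j}$ and $e\in\Gamma$), one has $\sigma_jh_{i_j}\in\K(h_{i_j})$, so it would suffice to prove $\sigma_jh_{i_j}\to c$, whence $c\in\limsup_j\K(h_{i_j})=\C$ by the very definition of $\rightharpoonup$ in \ref{R1.1}. Writing $\sigma_jh_{i_j}=(\sigma_jg_{i_j})(g_{i_j}^{-1}h_{i_j})=c_j\,(g_{i_j}^{-1}h_{i_j})$ and using $c_j\to c$ with joint continuity of multiplication, the matter reduces to $g_{i_j}^{-1}h_{i_j}\to e$ on compacta. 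Here is the difficulty: the prefix $\sigma_j$, equivalently $g_{i_j}^{-1}$, is a word of unbounded $\Gamma$-length, and $(\langle\Gamma\rangle,X)$ is \emph{not} uniformly equicontinuous, so mere closeness of $g_i$ to $h_i$ need not survive composition with $g_{i_j}^{-1}$. I expect to defeat this precisely through the tightness of the approximation: arranging $g_i$ to approximate $h_i$ so closely on $\overline{\Gamma x_0}$ that $g_{i_j}^{-1}h_{i_j}$ fixes $x_0$ and its $\Gamma$-orbit in the limit forces $\sigma_jh_{i_j}x_0\to c_jx_0$, while the compactness supplied by $|c_j|_S\le n$ confines the maps $\sigma_jh_{i_j}$ to a compact part of $\langle\Gamma\rangle$ along which they converge to $c$. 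Granting this control, $c\in\C$ and $cx_0\in U$, so $\C x_0\cap U\neq\emptyset$, which is the assertion.
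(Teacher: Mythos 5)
Your overall architecture is the same as the paper's: get into $\langle S\rangle$, apply $S$-recurrence of type~II to harvest witnesses $c_j$ of bounded length, use compactness of $\Gamma^n$ to extract $c_j\to c$ with $cx_0\in\overline{V}\subseteq U$, and then argue $c\in\C$. Your length bookkeeping $\sigma_j\in S^{|g_{i_j}|_S-1}\subseteq\Gamma^{\ell_{i_j}-1}$, hence $\sigma_jh_{i_j}\in\K(h_{i_j})$, is correct. But the proof is not complete: the step $c\in\C$ is exactly where you stop, writing ``granting this control.'' You have correctly diagnosed that your route needs $g_{i_j}^{-1}h_{i_j}\to e$ in the topology of $\langle\Gamma\rangle$ (uniform convergence on compacta of $X$), and correctly observed that closeness of $g_i$ to $h_i$ does not survive composition with the unbounded-length, non-equicontinuous maps $g_{i_j}^{-1}$ --- and then you do not resolve it. Worse, the mechanism you sketch is flawed on two counts: (i) the claim that $|c_j|_S\le n$ ``confines the maps $\sigma_jh_{i_j}$ to a compact part of $\langle\Gamma\rangle$'' is false, since $\sigma_jh_{i_j}=c_j\,(g_{i_j}^{-1}h_{i_j})$ and the second factor lies only in $\Gamma^{2\ell_{i_j}}$, so without $g_{i_j}^{-1}h_{i_j}\to e$ the net $\{\sigma_jh_{i_j}\}$ need not have any convergent subnet in $\mathcal{C}_u(X,X)$; (ii) controlling $\sigma_jh_{i_j}$ only at $x_0$ or on $\overline{\Gamma x_0}$ is not enough, because membership $c\in\C$ is a statement about convergence in the group's own topology, i.e.\ uniformly on every compact subset of $X$, not just near the orbit of $x_0$. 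The missing ingredient is a genuine lemma: for each fixed $i$, as $S^{\ell_i}\ni g\to h_i$ one also has $g^{-1}\to h_i^{-1}$ (because $g^{-1}\in S^{\ell_i}\subseteq\Gamma^{\ell_i}$ is confined to a compact set and any cluster point $\tau$ satisfies $h_i\tau=\lim gg^{-1}=e$ --- the cancellation device of Lemma~\ref{R1.9}); choosing $g_i$ far enough along this net relative to an exhaustion of $X$ by compacta and diagonalizing yields $g_i^{-1}h_i\to\mathrm{id}_X$ in $\mathcal{C}_u(X,X)$, whence $\sigma_jh_{i_j}\to c$ by joint continuity of composition and $c\in\limsup_j\K(h_{i_j})\subseteq\C$. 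Until something like this is supplied, the argument does not prove the lemma.

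For comparison, the paper avoids the back-transport problem altogether. It does not approximate the defining net and then carry the witnesses back; it invokes the density of $\langle S\rangle$ in $\langle\Gamma\rangle$ to assume from the outset that the net $\{g_i\}$ defining the cone lies in $\langle S\rangle$, so that the type~II witnesses $c_j\in S^{|g_{i_j}|-1}\cdot g_{i_j}$ already sit inside the original sets $\K(g_{i_j})$, and $c=\lim_jc_j\in\C$ is then immediate from the definition of $\rightharpoonup$. That reduction is where all the delicacy you are wrestling with is concentrated; by placing it at the level of the cone rather than at the level of the witnesses, the rest of the proof collapses to a few lines. If you prefer to keep your route (fixed cone, approximated net), you must prove the inverse-convergence lemma above; if you adopt the paper's, you should at least note why replacing the defining net by a nearby net in $\langle S\rangle$ loses nothing for the purpose of meeting $U$.
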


\begin{proof}
Let $\{g_i\,|\,i\in\Lambda\}$ be a net in $\langle\Gamma\rangle$ with $|g_i|\nearrow\infty$ and $\K(g_i)\rightharpoonup\C$. Let $U, V\in\mathfrak{N}_{x_0}$ with $\overline{V}\subseteq U$. We need prove that $\C x_0\cap U\not=\emptyset$. Since $\langle S\rangle$ is dense in $\langle\Gamma\rangle$, we can assume $g_i\in\langle S\rangle$ for all $i\in\Lambda$. Further by definition, there is an integer $n\ge1$, a subnet $\{g_{i_j}\}$ from $\{g_i\}$ and $c_j\in S^{|g_{i_j}|-1}\cdot g_{i_j}\subseteq\K(g_{i_j})$ such that $c_jx_0\in V$ and $|c_j|\le n$. By $c_j\in\Gamma^n$ and $\Gamma^n$ is compact, it follows that (a subnet of) $c_j\to c\in\C$ and $cx_0\in U$. The proof is complete.
\end{proof}

\begin{prop}[{cf.~\cite[Thm.~2.2]{R1} and \cite[Thm.~1.2]{R2} for $X$ s.t. $\dim X=0$}]\label{R5.5}%%%
Let $Sx$ be relatively compact in $X$ for all $x\in X$. Let $G$ be an equicontinuously generated group by $S$. Let $V$ be an open subset of $X$. Suppose $V_G^\infty$ is compact. Then $V_G^\infty$ is open if and only if $\overline{Gx}\cap V_G^\infty=\emptyset$ for every $x\in X\setminus V_G^\infty$.
\end{prop}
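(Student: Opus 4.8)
The plan is to reduce the assertion to Theorem~\ref{R2.6}b applied to the \emph{compactly generated} flow $(\langle\Gamma\rangle,X)$, where $\Gamma=\textrm{cls}_uS$ is the compact generating set furnished by \ref{R5.2} and \ref{R5.3}. The entire content of the proof is to match the two objects appearing in the statement — the orbit closures and the invariant core $V_G^\infty$ — across the groups $G=\langle S\rangle$ and its overgroup $\langle\Gamma\rangle\supseteq G$. Once these are identified, the equivalence for $G$ is a formal transcription of the one for $\langle\Gamma\rangle$.

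First I would establish $\overline{Gx}=\overline{\langle\Gamma\rangle x}$ for every $x\in X$. The inclusion $\overline{Gx}\subseteq\overline{\langle\Gamma\rangle x}$ is immediate from $G\subseteq\langle\Gamma\rangle$. For the reverse, recall from \ref{R5.3} that $G=\langle S\rangle$ is dense in $\langle\Gamma\rangle$ for the topology of uniform convergence on compacta; hence for $g\in\langle\Gamma\rangle$ there is a net $h_n\in G$ with $h_n\to g$, and evaluating at $x$ gives $h_nx\to gx$, so $gx\in\overline{Gx}$. Thus $\langle\Gamma\rangle x\subseteq\overline{Gx}$, and the two orbit closures coincide.

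Next I would prove the key identity $V_G^\infty=V_{\langle\Gamma\rangle}^\infty$, where $V_{\langle\Gamma\rangle}^\infty=\bigcap_{g\in\langle\Gamma\rangle}gV$. The inclusion $V_{\langle\Gamma\rangle}^\infty\subseteq V_G^\infty$ is clear from $G\subseteq\langle\Gamma\rangle$. For the converse I invoke the hypothesis that $V_G^\infty$ is compact: this set is $G$-invariant and satisfies $V_G^\infty\subseteq V$ (take $g=e$), so for $x\in V_G^\infty$ we have $Gx\subseteq V_G^\infty$, whence $\overline{Gx}\subseteq V_G^\infty$ by closedness. Combining with the previous step, $\langle\Gamma\rangle x\subseteq\overline{\langle\Gamma\rangle x}=\overline{Gx}\subseteq V_G^\infty\subseteq V$, which says exactly that $x\in V_{\langle\Gamma\rangle}^\infty$. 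Hence $V_G^\infty=V_{\langle\Gamma\rangle}^\infty$, and in particular $V_{\langle\Gamma\rangle}^\infty$ is compact.

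Finally I would apply Theorem~\ref{R2.6}b to $(\langle\Gamma\rangle,X)$ and the open set $V$: it yields that $V_{\langle\Gamma\rangle}^\infty$ is open if and only if $\overline{\langle\Gamma\rangle x}\cap V_{\langle\Gamma\rangle}^\infty=\emptyset$ for every $x\in X\setminus V_{\langle\Gamma\rangle}^\infty$. Rewriting this through the two identities $V_G^\infty=V_{\langle\Gamma\rangle}^\infty$ and $\overline{Gx}=\overline{\langle\Gamma\rangle x}$ gives precisely the claimed equivalence for $G$. The one step deserving care, and the only place where the hypotheses genuinely enter, is the converse inclusion in $V_G^\infty=V_{\langle\Gamma\rangle}^\infty$: since $V$ is merely open, passing to closures lands a priori only in $\overline V$, and it is exactly the compactness (hence closedness) of $V_G^\infty$ together with its $G$-invariance that upgrades $\langle\Gamma\rangle x\subseteq\overline{Gx}\subseteq\overline V$ to $\langle\Gamma\rangle x\subseteq V$. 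Everything else is routine bookkeeping.
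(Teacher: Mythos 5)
Your proposal is correct and follows essentially the same route as the paper: identify $\overline{Gx}=\overline{\langle\Gamma\rangle x}$ via density of $\langle S\rangle$ in $\langle\Gamma\rangle$, deduce $V_G^\infty=V_{\langle\Gamma\rangle}^\infty$ from the closedness and $G$-invariance of the compact set $V_G^\infty$, and then quote Theorem~\ref{R2.6}b for the compactly generated flow $(\langle\Gamma\rangle,X)$. You merely spell out the details that the paper compresses into two lines.
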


\begin{proof}
Let $G=\langle S\rangle$, where $S$ is the equicontinuously generating set of $G$. Then $\overline{Gx}=\overline{\langle\Gamma\rangle x}$ for all $x\in X$. Since $V_G^\infty$ is closed, so $V_G^\infty=\bigcap_{g\in\langle\Gamma\rangle}gV=V_{\langle\Gamma\rangle}^\infty$. Then Proposition~\ref{R5.5} follows from Theorem~\ref{R2.6}b. The proof is complete.
\end{proof}

\begin{prop}[{cf.~\cite[Thm.~1.3]{R2} using Prop.~\ref{R5.5}}]\label{R5.6}%%%
Let $\dim X=0$ and $G=\langle S\rangle$ such that $\overline{Gx}$ is compact for all $x\in X$. Then the following are pairwise equivalent:
\begin{enumerate}[i)]
\item $(G,X)$ is pointwise $S$-recurrent of type II.
\item $(G,X)$ is pointwise a.p.
\item Given any compact open subset $V$ of $X$, $V_G^*$ is open.
\item The orbit closure relation $R_o(X)$ of $(G,X)$ is closed.
\item There exists an extension $\rho\colon(G,X)\rightarrow(G,Y)$ such that:
\begin{enumerate}[1)]
\item $Y$ is locally compact 0-dimensional,
\item $gy=y$ for all $y\in Y$ and $g\in G$, and
\item $\rho^{-1}y$ is a minimal subset of $(G,X)$ for each $y\in Y$.
\end{enumerate}
\end{enumerate}
\end{prop}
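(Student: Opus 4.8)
The plan is to transfer everything to the compactly generated flow $(\langle\Gamma\rangle,X)$, where $\Gamma=\textrm{cls}_uS$, and then read off the conclusions from Theorem~\ref{R2.6}a. First I would record the two structural facts that make this reduction legitimate. By the Lemma in \ref{R5.2} and by \ref{R5.3}, $\langle\Gamma\rangle$ is compactly generated by $\Gamma$ with $e\in\Gamma=\Gamma^{-1}$; and since $\langle S\rangle$ is dense in $\langle\Gamma\rangle$ in the topology of uniform convergence (see \ref{R5.3}), joint continuity of the action yields $\overline{Gx}=\overline{\langle\Gamma\rangle x}$ for every $x\in X$. Hence $(\langle\Gamma\rangle,X)$ is a Lagrange stable flow with $\langle\Gamma\rangle$ compactly generated and $\dim X=0$, so Theorem~\ref{R2.6}a applies to it. Because the orbit closures coincide, the three conditions (ii), (iv), (iii) are \emph{literally} conditions (3), (5), (8) of Theorem~\ref{R2.6}a for $(\langle\Gamma\rangle,X)$: a set is $G$-minimal iff it is $\langle\Gamma\rangle$-minimal, $R_o(X)$ is the same relation for both groups, and $V_G^*=V_{\langle\Gamma\rangle}^*$ (here $V_G^*=V_G^\infty$ is closed, hence compact as a closed subset of the compact $V$, so ``open'' and ``compact open'' agree). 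Thus (ii)$\Leftrightarrow$(iii)$\Leftrightarrow$(iv) is immediate.

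Next I would close the loop with (i). For (i)$\Rightarrow$(ii): if $(G,X)$ is pointwise $S$-recurrent of type II, then by Lemma~\ref{R5.4} every point is $\Gamma$-recurrent of type I under $(\langle\Gamma\rangle,X)$, which is condition (1) of Theorem~\ref{R2.6}a, whence (ii). For (ii)$\Rightarrow$(i) I would rerun the proof of Lemma~\ref{R2.3}(1) with $S$ and the $S$-word-length in place of $\Gamma$: the sole ingredient is the combinatorial fact behind Lemma~\ref{R2.1}d, whose proof uses nothing about the generator beyond $e\in\Gamma=\Gamma^{-1}$ and $G=\bigcup_r\Gamma^r$, both of which hold for $S$. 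Concretely, if $x$ is a.p and $A=N_G(x,U)$ is discretely syndetic with $F^{-1}A=G$ ($F$ finite, so $F\subseteq S^r$ for some $r$), then for any net with $|g_i|\to\infty$ one writes $g_{i}=\beta_i\alpha_i$ with $\alpha_i\in S^{r+1}$ and $\beta_i\in S^{|g_i|-r-1}$ once $|g_i|\ge r+1$, so that $F\alpha_i=F\beta_i^{-1}g_i\subseteq S^{r}S^{|g_i|-r-1}g_i=S^{|g_i|-1}g_i$; choosing $f_i\in F$ with $c_i=f_i\alpha_i\in A$ gives $c_ix\in U$, $c_i\in S^{|g_i|-1}g_i$, and $|c_i|$ bounded, which is exactly $S$-recurrence of type II.

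Finally I would treat the extension condition (v). For (ii)$\Rightarrow$(v), use that (ii)$\Leftrightarrow$(iv) already gives $R_o(X)$ closed, hence a closed $G$-invariant equivalence relation (cf.~\ref{R1.4}); set $Y=X/R_o(X)$ with quotient map $\rho$. By Lemma~\ref{R1.4}b (symmetry of $R_o$) $\rho$ is open, and by the remark following \ref{R1.4}a, $Y$ is locally compact Hausdorff. The $G$-action on $Y$ is trivial since each orbit lies in one class, and each fiber $\rho^{-1}(\rho x)=\overline{Gx}$ is minimal by \ref{R1.4}a. Zero-dimensionality of $Y$ is where (iii) does the work: given $x$ and a neighborhood of $\rho x$, pick (using local compactness and $\dim X=0$) a compact-open $V$ with $\overline{Gx}\subseteq V\subseteq\rho^{-1}(\textrm{that neighborhood})$; then $V_G^*$ is saturated, contains the fiber, and is clopen by (iii), so $\rho(V_G^*)$ is a clopen neighborhood of $\rho x$ inside the given one, giving a clopen base. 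Conversely (v)$\Rightarrow$(ii) is easy: each orbit stays in its fiber (as $\rho(gx)=g\rho x=\rho x$), and since that fiber is a minimal set containing $x$, we get $\overline{Gx}=\rho^{-1}(\rho x)$, so $x$ is a.p.

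The step I expect to demand the most care is (ii)$\Rightarrow$(i): one must check that replacing the compact generator $\Gamma$ by the possibly non-closed set $S$ does not damage the word-length estimates. The resolution is that Lemma~\ref{R2.3}(1) and Lemma~\ref{R2.1}d use only the combinatorics of $S$-words (that $e\in S=S^{-1}$ and $G=\bigcup_r S^r$), and the compactness of the generator enters only in the already-secured converse direction through Lemma~\ref{R5.4}; a secondary point needing attention is the saturation and clopenness argument establishing $\dim Y=0$ in (v).
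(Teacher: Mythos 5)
Your proposal is correct and follows essentially the same route as the paper: reduce to the compactly generated flow $(\langle\Gamma\rangle,X)$ via \ref{R5.2}--\ref{R5.3} and Lemma~\ref{R5.4}, invoke Theorem~\ref{R2.6}a for the equivalence of i)--iv), and realize v) as $Y=X/R_o(X)$ with the clopen quotient map. The only difference is that you spell out details the paper leaves implicit --- notably the $S$-word version of Lemma~\ref{R2.3}(1) for (ii)$\Rightarrow$(i), and the use of (iii) to exhibit a clopen base in $Y$ --- and both of these are carried out correctly.
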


\begin{proof}
By Lemma~\ref{R5.4}, $(\langle\Gamma\rangle,X)$ is pointwise $\Gamma$-recurrent of type I.
Since $\overline{Gx}=\overline{\langle \Gamma\rangle x}$ for all $x\in X$, $x$ is a.p under $(G,X)$ iff $x$ is a.p under $(\langle \Gamma\rangle,X)$. Then
i)\,$\Leftrightarrow$\,ii)\,$\Leftrightarrow$\,iii)\,$\Leftrightarrow$\,iv)\,$\Leftrightarrow$\,iv$^\prime$) by Theorem~\ref{R2.6}a, where
\begin{enumerate}[iv$^\prime$)]
\item $\mathscr{O}_G\colon X\rightarrow 2^X$ is upper semi-continuous.
\end{enumerate}

v)$\Rightarrow$ii): By 3) of v).

iv$^\prime$)$\Rightarrow$v): Assume iv$^\prime$). Define $Y=X/G=X/R_o(X)$ and let $\rho\colon X\rightarrow Y$ be the canonical quotient mapping. Then $\rho$ is closed (cf.~\cite[Theorem~3.12]{K}), and moreover, by ii) and Lemma~\ref{R1.4}b it is easy to see that $\rho$ is open so $Y$ is Hausdorff. Since $\rho$ is clopen and $X$ is locally compact 0-dimensional, $Y$ is locally compact 0-dimensional. Thus $(G,Y)$ is a flow having the properties 1), 2) and 3).

The proof of Proposition~\ref{R5.6} is thus completed.
\end{proof}
%%%%%%%%%%%%%%%%%%%%%%%%%%%%%%%%%%
\medskip

\noindent
\textbf{Acknowledgments. }
The author would like to thank the referee for her/his constructive comments that improve the manuscript.
%points. Thanks are also due to Dr. Zhijing Chen for his many comments.
%%Particularly he is grateful to the anonymous reviewers for their comments.
This work was supported by National Natural Science Foundation of China (Grant No. 11790274) and PAPD of Jiangsu Higher Education Institutions.
%%%%%%%%%%%%%%%%%%%%%%%%%%%


\begin{thebibliography}{10}
\bibitem{A88}
   \newblock {J. Auslander},
   \newblock {\it Minimal Flows and their Extensions},
   \newblock {North-Holland Math. Studies Vol. 153. North-Holland, Amsterdam, 1988}.

\bibitem{AD}% (MR3986305) [10.3934/dcds.2019190]
  \newblock {J.~Auslander and X.~Dai},
  \newblock {\it Minimality, distality and equicontinuity for semigroup actions on compact Hausdorff spaces},
  \newblock {Discrete Contin. Dyn. Syst., 39 (2019), 4647--4711}.

\bibitem{AGW}
   \newblock {J.~Auslander, E.~Glasner and B.~Weiss},
   \newblock {\it On recurrence in zero dimensional flows},
   \newblock {Forum Math., 19 (2007), 107--114}.


\bibitem{AGH}
   \newblock {L. Auslander, L. Green and F. Hahn},
   \newblock {\it Flows on homogeneous spaces},
   \newblock With the assistance of L. Markus and W. Massey, and an appendix by L. Greenberg. Annals of Mathematics Studies, No. 53 Princeton University Press, Princeton, N.J. 1963 vii+107 pp.


\bibitem{CD}
   \newblock {K. Cao and X. Dai},
   \newblock {\it Lifting the regionally proximal relation and characterizations of distal extensions},
   \newblock Discrete Contin. Dyn. Syst., 42 (2022), 2103--2174.


\bibitem{E58}% (MR101283) [10.2140/pjm.1958.8.401]
  \newblock {R. Ellis},
  \newblock {\it Distal transformation groups},
  \newblock {Pacific J. Math., 8 (1958), 401--405}.

\bibitem{E69}
\newblock R. Ellis,
\newblock {\it Lectures on Topological Dynamics},
\newblock Benjamin, New York, 1969.

\bibitem{EGS}% (MR380755) [10.1016/0001-8708(75)90093-6]
\newblock {R. Ellis, S. Glasner and L. Shapiro},
\newblock {\it Proximal-isometric $(\mathscr{P}\!\!\mathscr{I})$ flows},
\newblock {Adv. in Math., 17 (1975), 213--260}.

\bibitem{En}
   \newblock {R. Engelking},
   \newblock {\it Dimension Theory},
   \newblock {North-Holland Mathematical Library, 19. North-Holland, Amsterdam-Oxford-New York, 1978}.

\bibitem{ES}
   \newblock {P.~Erd\"{o}s and A.\,H.~Stone},
   \newblock {\it Some remarks on almost periodic transformations},
   \newblock {Bull. Amer. Math. Soc., 51 (1945), 126--130}.

\bibitem{F}% (MR157368) [10.2307/2373137]
\newblock {H. Furstenberg},
\newblock {\it The structure of distal flows},
\newblock {Amer. J. Math., 85 (1963), 477--515}.

\bibitem{F81}
   \newblock {H.~Furstenberg},
   \newblock {\it Recurrence in Ergodic Theory and Combinatorial Number Theory},
   \newblock {Princeton University Press, Princeton, N.J., 1981}.

\bibitem{G76}% (MR0474243)
\newblock S.~Glasner,
\newblock {\it Proximal Flows},
\newblock {Lecture Notes in Math., {517}, Springer-Verlag, 1976.}

\bibitem{GM}% (MR0474243)
\newblock S.~Glasner and D.~Maon,
\newblock {\it Rigidity in topological dynamics},
\newblock {Ergod. Th. $\&$ Dynam. Sys., 9 (1989), 309--320}

\bibitem{G44}
   \newblock {W.\,H.~Gottschalk},
   \newblock {\it Powers of hemeomorphisms with almost periodic properties},
   \newblock {Bull. Amer. Math. Soc., 50 (1944), 222--227}.

\bibitem{G46}
   \newblock {W.\,H.~Gottschalk},
   \newblock {\it Almost periodic points with respect to transformation semi-groups},
   \newblock {Annals of Math., 47 (1946), 762--766}.

\bibitem{G56}
   \newblock {W.\,H.~Gottschalk},
   \newblock {\it Characterizations of almost periodic transformation groups},
   \newblock {Proc. Amer. Math. Soc., 7 (1956), 709--712}.


\bibitem{GH}
   \newblock {W.\,H.~Gottschalk and G.\,A.~Hedlund},
   \newblock {\it Topological Dynamics},
   \newblock {Amer. Math. Soc. Coll. Publ. Vol. 36, Amer. Math. Soc., Providence, R.I., 1955}.

\bibitem{HR}
   \newblock {E.~Hewitt and K.\,A.~Ross},
   \newblock {\it Abstract Harmonic Analysis. Vol.~I}.
   \newblock {Springer-Verlag, New York, Berlin, 1963}.

\bibitem{K}
   \newblock {J.\,L.~Kelley},
   \newblock {\it General Topology}.
   \newblock {GTM 27, Springer-Verlag, New York, Berlin, Heidelberg, 1955}.

\bibitem{MW76}
  \newblock {D.\,C.~ McMahon and T.\,S.~Wu},
  \newblock {\it On the connectedness of homomorphisms in topological dynamics},
  \newblock {Trans. Amer. Math. Soc., 217 (1976), 257--270}.

\bibitem{MR}
  \newblock {A.~ Miller and J.~Rosenblatt},
  \newblock {\it Characterizations of regular almost periodicity in compact minimal abelian flows},
  \newblock {Trans. Amer. Math. Soc., 356 (2004), 4909--4929}.

\bibitem{Mo}
  \newblock {D.~Montgomery},
  \newblock {\it Pointwise periodic homeomorphisms},
  \newblock {Amer. J. Math., 59 (1937), 118--120}.

\bibitem{NS}% (MR0121520)
\newblock {V.\,V. Nemytskii and V.\,V. Stepanov},
\newblock \textit{Qualitative Theory of Differential Equations},
\newblock Princeton University Press, Princeton, New Jersey, 1960.


\bibitem{R1}
  \newblock {C.\,D.~Reid},
  \newblock {\it Distal actions on coset spaces in totally disconnected locally compact groups},
  \newblock {J. Topology Anal., 12 (2020), 491--532}.

\bibitem{R2}
  \newblock {C.\,D.~Reid},
  \newblock {\it Equicontinuity, orbit closures and invariant compact open sets for group actions on zero-dimensional spaces},
  \newblock {Groups Geom. Dyn., 14 (2020), 413--425}.

\bibitem{SS}
  \newblock {R.~ Sacker and G.~Sell},
  \newblock {\it Finite extensions of minimal transformation groups},
  \newblock {Trans. Amer. Math. Soc., 190 (1974), 325--334}.


\bibitem{V77}% (MR467705) [10.1090/S0002-9904-1977-14319-X]
\newblock {W.\,A.~Veech},
\newblock {\it Topological dynamics},
\newblock {Bull. Amer. Math. Soc., (1977), 775--830.}
\end{thebibliography}
\end{document}